\documentclass[]{amsart}
\usepackage[latin1]{inputenc}
\usepackage{amsmath}
\usepackage{amscd}
\usepackage{amsfonts}
\usepackage{amssymb}
\usepackage{mathrsfs,cite,float}
\usepackage{graphicx}
\usepackage{tikz}
\usetikzlibrary{shapes,arrows,automata,positioning,decorations.pathreplacing,decorations.markings}

\usepackage[all,cmtip,2cell]{xy}
\UseTwocells
\xyoption{poly}

\def\softd{{\leavevmode\setbox1=\hbox{d}%
		\hbox to 1.05\wd1{d\kern-0.2ex{\char039}\hss}}}

\newtheorem{thm}{Theorem}[section]
\newtheorem{lem}[thm]{Lemma}
\newtheorem{pro}[thm]{Proposition}

\theoremstyle{definition}
\newtheorem{dfn}{Definition}[section]
\theoremstyle{remark}

\title[Inductive groupoids and cross-connections of regular semigroups]{Inductive groupoids and cross-connections\\ of regular semigroups}
\author{P. A. Azeef Muhammed}
\address{Institute of Natural Sciences and Mathematics,
	Ural Federal University,
	Lenina 51,
	620000 Ekaterinburg, Russia.}
\email{a.a.parail@urfu.ru, azeefp@gmail.com}
\author{M. V. Volkov}
\address{Institute of Natural Sciences and Mathematics,
	Ural Federal University,
	Lenina 51,
	620000 Ekaterinburg, Russia.}
\email{mikhail.volkov@usu.ru}
\keywords{Regular semigroup, biordered set, inductive groupoid, cross-connection, normal category}
\subjclass[2010]{20M10, 20M17, 20M50}
\thanks{The authors acknowledge the financial support by the Competitiveness Enhancement Program of the Ural Federal University. They were also supported by the Russian Foundation for Basic Research, grant no. 17-01-00551, and the Ministry of Education and Science of the Russian Federation, project no. 1.3253.2017/4.6}
\begin{document}
	\maketitle
	\begin{abstract}
		There are two major structure theorems for an arbitrary regular semigroup using categories, both due to Nambooripad. The first construction using inductive groupoids departs from the biordered set structure of a given regular semigroup. This approach belongs to the realm of the celebrated Ehresmann--Schein--Nambooripad Theorem and its subsequent generalisations. The second construction is a generalisation of Grillet's work on cross-connected partially ordered sets, arising from the principal ideals of the given semigroup. In this article, we establish a direct equivalence between these two seemingly different constructions. We show how the cross-connection representation of a regular semigroup may be constructed directly from the inductive groupoid of the semigroup, and vice versa. 
	\end{abstract}
	
	\section{Background and Overview}
	In 1970, Munn published a seminal article \cite{munn} describing the structure of a fundamental inverse semigroup from its semilattice of idempotents. He showed that every fundamental inverse semigroup can be realised as a certain semigroup of partial bijections of its semilattice of idempotents. Two distinct generalisations of this result to fundamental regular semigroups were established in the 1970s. The first approach initiated by Hall \cite{hall} and later refined by Grillet \cite{gril,gril1,gril2} was based on the observation that the ideal structure of a semigroup arises from a cross-connected pair of partially ordered sets. The second approach, closer to Munn's original one, relied on the idempotent structure of the semigroup, and came from a completely isolated source in India. Nambooripad, in his doctoral thesis \cite{kssthesis, ksssf} at the newly founded Department of Mathematics in the University of Kerala, identified (and axiomatised) the structure of the idempotents of a regular semigroup as a regular biordered set and constructed a fundamental regular semigroup as an exact generalisation of Munn's representation. 
	
	An equally promising approach to study arbitrary inverse semigroups (not only fundamental ones) was proposed by Schein \cite{schein, schein1} in 1965, connecting the ideas of Wagner's school of inverse semigroups with Ehresmann's work on ordered groupoids \cite{ehrs, ehrs1} related to pseudogroups. (See \cite[Notes on Chapter 4]{lawson} for a detailed history.)  Although initially unaware of these developments, Nambooripad \cite{kssthesis,ksssf1} gave a general construction of arbitrary regular semigroups and in addition, he placed it on a proper conceptual framework by describing a category equivalence between the category of inductive groupoids\footnote{Nambooripad had initially called these structures as \emph{regular systems} in his Ph.D. thesis. We use the term \emph{inductive groupoid} throughout the article as used by Nambooripad in his later Memoirs\cite{mem} in the context of regular semigroups. The inductive groupoids of inverse semigroups form a special case of Nambooripad's inductive groupoids.} and the category of regular semigroups. 
	
	Lawson \cite{lawson} in his book on inverse semigroups gathered the aforementioned results from diverse origins, in the context of inverse semigroups, and named the resulting statement the Ehresmann--Schein--Nambooripad (ESN) Theorem (also see \cite{hollings}). Nambooripad's results on regular semigroups were polished in the backdrop of Schein's work and published in 1979 as Memoirs of the American Mathematical Society \cite{mem}; this has remained as one of the most seminal articles on the structure theory of regular semigroups till the present date. It should also be mentioned here that at the same time, a very similar approach using structure mappings was independently initiated by Meakin \cite{stmp0,stmp,stmp1}. As we shall see later, this may be seen as a variant of Nambooripad's initial approach.
	
	In 1978, Nambooripad \cite{bicxn} explored the relationship between the two constructions of fundamental regular semigroups: his biordered set approach and Grillet's cross-connection approach, and showed that they are equivalent, thus completing the following triangle.\vspace{.5cm}\\ 
	
	\begin{tikzpicture}[align=center,node distance=2cm]
	\tikzstyle{block} = [rectangle, draw, fill=green!15, text width=7em, text centered, rounded corners, minimum height=4em]
	\tikzstyle{cloud} = [draw, ellipse,fill=violet!15, text width=8em, text centered,  node distance=3cm,
	minimum height=2em]
	
	\node [block] (rs) {Fundamental regular semigroups};
	\node [cloud, below right=0.7cm and 1.2cm of rs] (cxn) {Cross-connected regular posets};
	\node [cloud, below left=0.7cm and 1.2cm of rs] (ind) {Regular biordered sets};
	
	\draw[latex'-latex',double] (ind) -- node[label=270:{\textbf{Nambooripad, 1978}}] {} (cxn);
	\draw[latex'-latex',double] (rs) -- node[label=30:{\textbf{Grillet, 1974}}] {} (cxn);
	\draw[latex'-latex',double] (rs) -- node[label=120:{\textbf{Nambooripad, 1973}}] {} (ind);
	\end{tikzpicture}\vspace{.5cm}\\
	
	In 1989, Nambooripad \cite{cross0} (revised in 1994 as \cite{cross}) generalised Grillet's construction of fundamental regular semigroups to arbitrary regular semigroups by replacing partially ordered sets with suitable categories. This construction, however, has not attained the research interest it deserved, in deep contrast to \cite{mem}. One of the reasons for this is that both treatises \cite{cross0} and \cite{cross} appeared as local publications in Thiruvananthapuram, India, and were basically unavailable internationally. Another, more   conceptual reason for the cross-connection theory to stay in limbo for decades is that constructing arbitrary regular semigroups from cross-connected categories demanded extensive use of the notions and language of category theory at a quite advanced level. This made the entry threshold of the cross-connection machinery look relatively high, and besides that, the machinery itself constituted a ``technical tour de force'', quoting \cite{jmar}.    
	
	In \cite{cross}, Nambooripad had proved the equivalence of the category of regular semigroups with the category of cross-connections. So \cite{mem} combined with \cite{cross}, by transitivity, implies that Nambooripad's cross-connection theory is equivalent to its mainstream predecessor, inductive groupoid theory. In this article, we explore the direct relationship between cross-connections and inductive groupoids, in the realm of regular semigroups. Namely, we directly construct the inductive groupoid of a regular semigroup starting from the cross-connection representation of the semigroup, and vice versa. This completes the following triangle in the case of arbitrary regular semigroups.\vspace{.5cm}\\
	
	\begin{tikzpicture}[align=center,node distance=2cm]
	\tikzstyle{block} = [rectangle, draw, fill=green!15, text width=7em, text centered, rounded corners, minimum height=4em]
	\tikzstyle{cloud} = [draw, ellipse,fill=violet!15, text width=9.4em, text centered,  node distance=3cm,
	minimum height=2em]
	
	\node [block] (rs) {Regular semigroups};
	\node [cloud, below right=0.7cm and .7cm of rs] (cxn) {Cross-connected normal categories};
	\node [cloud, below left=0.7cm and .7cm of rs] (ind) {Inductive groupoids of regular semigroups};
	
	\draw[latex'-latex',double] (ind) -- node[label=270:{\textbf{This article}}] {}  (cxn);
	\draw[latex'-latex',double] (rs) -- node[label=30:{\textbf{Nambooripad, 1989}}] {} (cxn);
	\draw[latex'-latex',double] (rs) -- node[label=120:{\textbf{Nambooripad, 1979}}] {} (ind);
	
	\end{tikzpicture}
	\vspace{.5cm}
	
	As the reader will see, the equivalence between inductive groupoids and cross-connections constructed in the present article is not a mere ``composition'' of the equivalencies established by Nambooripad. We believe that our results may shed some new light on both approaches since our study reveal some essential differences in the ways inductive groupoids and cross-connections encode regular semigroups.
	
	The article is divided into five sections. In Section \ref{pre}, we set up the notation and give the necessary preliminaries regarding semigroups, categories, biordered sets, inductive groupoids, and cross-connections needed for our discussion. In Section \ref{ind}, starting from the cross-connection representation of a regular semigroup $S$, we directly construct the inductive groupoid of the cross-connection and show that this groupoid is isomorphic to the inductive groupoid of $S$. In Section \ref{cxn}, conversely, given the inductive groupoid of the semigroup $S$, we construct a cross-connection such that it is cross-connection isomorphic to the cross-connection of $S$. In the final section, we discuss recent research developments related to \cite{mem}, namely the problems related to the maximal subgroups of the free idempotent generated semigroup on a biordered set and various non-regular generalisations of the ESN theorem. We conclude by suggesting some possible future directions on these problems via the theory of cross-connections. 
	
	In a follow-up article \cite{indcxn2} we extend the equivalence constructed in the present article to a category equivalence between abstract inductive groupoids and cross-connections. 
	
	\section{Preliminaries}\label{pre}
	Since this article aims to relate several seemingly different approaches to studying regular semigroups, we need to recall the main concepts involved in these approaches in some detail and to present these concepts using coherent notation. Therefore the list of the necessary preliminaries has become relatively lengthy even though we have not set the (non-realistic) goal of making the article fully self-contained. We refer the reader to Clifford and Preston \cite{clif} for standard notions from semigroup theory, to MacLane \cite{mac} or Higgins \cite{higginscat} for category theory, to Nambooripad's Memoirs \cite{mem} for details on biordered sets and inductive groupoids, to Grillet's book \cite{grillet} for structure mappings and Grillet's cross-connections, and to Nambooripad's treatise \cite{cross} for Nambooripad's cross-connection theory.  
	
	\subsection{Semigroups}
	An element $e$ of a semigroup $S$ is called an \emph{idempotent} if $e^2=e$. We denote by $E(S)$ the set of all idempotents of $S$. An element $b$ of a semigroup $S$ is called an \emph{inverse} of an element $a$ in $S$ if
	$$aba=a\text{ and }bab=b.$$
	We denote the set of all the inverses of an element $a$ in $S$ by $V(a)$ and an inverse of $a$ is denoted by $a'$. A semigroup $S$ is said to be \emph{regular} if every element in $S$ has at least one inverse element. A semigroup $S$ is said to be \emph{inverse} if every element in $S$ has a unique inverse.
	
	If $a$ is an element of a semigroup $S$, the \emph{principal left ideal} generated by $a$ is the subset $Sa \cup \{a\}$, and is denoted by $S^1a$ (or simply $Sa$, when $S$ is regular). Similarly the \emph{principal right ideal} $aS^1$ (or $aS$) is the subset $aS \cup \{a\}$. The \emph{Green} $\mathscr{L}$ relation on $S$ is defined by the rule that $a \mathscr{L} b$ if and only if $S^1a = S^1b$. Similarly the $\mathscr{R}$ relation is defined by $a \mathscr{R} b$ if and only if $aS^1 = bS^1$. We define the $\mathscr{H}$ relation as $\mathscr{L}\cap \mathscr{R}$ and the $\mathscr{D}$ relation as $\mathscr{L}\vee \mathscr{R}$ (in the lattice of all equivalences on $S$). It is well known that $\mathscr{D}=\mathscr{L} \circ \mathscr{R}= \mathscr{R}\circ\mathscr{L}$ where $\circ$ is the usual product of binary relations.
	
	An equivalence relation $\mathscr{C}$ on a semigroup $S$ is called a \emph{congruence} if $a\:\mathscr{C} \:b$, $c\:\mathscr{C}\:d$ implies $ac\:\mathscr{C}\:bd$. A semigroup $S$ is said to be \emph{fundamental} if the equality relation on $S$ is the only congruence contained in $\mathscr{H}$.
	
	\subsection{Categories}
	A \emph{category} $\mathcal{C}$ is a class of objects (denoted $v\mathcal{C}$) together with a collection of disjoint classes, denoted by $\mathcal{C}(a,b)$; one for each pair $(a,b)$ of objects in $v\mathcal{C}$. An element $f$ of $\mathcal{C}(a,b)$ is called a \emph{morphism} from $a$ to $b$; we often write $f\colon a\to b$ to say that $f$ is a morphism from $a$ to $b$. For each triple $(a,b,c)$ of objects in $\mathcal{C}$, a {composition} function $\mathcal{C}(a,b)\times\mathcal{C}(b,c)\to\mathcal{C}(a,c)$ is defined. Given morphisms $f\colon a \to b$ and $g\colon b \to c$, their composition will be written $fg$. Further, \emph{associativity} of composition and \emph{existence of identities} are assumed in a category. Associativity means that for each quadruple $(a,b,c,d)$ of objects in $\mathcal{C}$ and for each triple of morphisms $(f,g,h)\in\mathcal{C}(a,b)\times\mathcal{C}(b,c)\times\mathcal{C}(c,d)$, the expressions $(fg)h$ and $f(gh)$ represent the same morphism in $\mathcal{C}(a,d)$. Existence of identities means that for every object $c\in v\mathcal{C}$, there exists a morphism $1_c\colon c\to c$ such that $f1_c=f$ for every morphism $f\colon a\to c$ and $1_cg=g$ for every morphism $g\colon c\to b$. We shall often identify the identity morphism $1_c$ at an object $c\in v\mathcal{C}$ with the object $c$. With this convention, the morphisms of a category $\mathcal{C}$ completely determine $\mathcal{C}$, and having this in mind, we shall denote the class of all morphisms of $\mathcal{C}$ by $\mathcal{C}$ itself. 
	
	A category $\mathcal{C}$ is called \emph{small} if both $v\mathcal{C}$ and $\mathcal{C}$ are sets rather than proper classes. A category $\mathcal{C}$ is said to be \emph{locally small} if the morphism class $\mathcal{C}(a,b)$ for each pair $(a,b)$ of objects in $v\mathcal{C}$ is a set instead of a proper class. Observe that natural categories like the category $\mathbf{Set}$ of all sets with functions as morphisms, the category $\mathbf{Grp}$ of all groups with group homomorphisms as morphisms, the category $\mathbf{RS}$ of all regular semigroups with semigroup homomorphisms as morphisms etc are locally small categories. 
	
	A morphism $f\colon c\to d$ is called an \emph{epimorphism} if it is left-cancellative, i.e., for all morphisms $h,k \colon d \to e$, the equality $f h = f k$ implies $h=k$. Similarly, a morphism $f\colon c\to d$ is called a \emph{monomorphism} if it is right-cancellative, i.e., for all morphisms $h,k \colon b \to c$, the equality $hf = kf$ implies $h=k$. A morphism $f\colon c\to d$ is called an \emph{isomorphism} if there exists a morphism $g\colon d \to c$ such that $fg=1_c$ and $gf=1_d$. Observe that an isomorphism is always an epimorphism and a monomorphism, but converse need not be true. There can be a morphism which is both an epimorphism and a monomorphism, but fails to be an isomorphism.
	
	A \emph{preorder} is a category with at most one morphism from an object to another. A \emph{strict preorder} is a preorder where the only isomorphisms are the identity morphisms. A small category in which every morphism is an isomorphism is called a \emph{groupoid}.
	
	\subsection{Functors}
	Given two categories $\mathcal{C}$ and $\mathcal{D}$, a \emph{functor} $F\colon\mathcal{C}\to\mathcal{D}$ consists of two functions: the object function (denoted by $vF$) which assigns to each object $a$ of $\mathcal{C}$, an object $vF(a)$ (often denoted by just $F(a)$) of the category $\mathcal{D}$ and the morphism function (denoted by $F$ itself) which assigns to each morphism $f\colon a\to b$ of $\mathcal{C}$, a morphism $F(f)\colon F(a) \to F(b)$ in $\mathcal{D}$. These two functions should respect identities and composition, that is, they should satisfy the following properties: $F(1_c)= 1_{F(c)}$ for every $c\in v\mathcal{C}$ and $F(f g) = F(f)  F(g)$, whenever $f g$ is defined in $\mathcal{C}$. In the sequel, although functors are written as left operators, the composition will be from left to right, i.e., if $F\colon \mathcal{C}\to \mathcal{D}$ and $G\colon\mathcal{D}\to \mathcal{E}$ are two functors then their composition functor $FG\colon\mathcal{C}\to \mathcal{E}$ shall be defined as $FG(c)= G(F(c))$ and $FG(f)= G(F(f))$ for each object $c\in v\mathcal{C}$ and for each morphism $f\in\mathcal{C}$.
	
	A functor $F\colon\mathcal{C}\to \mathcal{D}$ is said to be $v$-\emph{surjective}, $v$-\emph{injective} or $v$-\emph{bijective} if the object map $vF$ has the corresponding property. A functor $F\colon\mathcal{C}\to\mathcal{D}$ is said to be \emph{full}, \emph{faithful} or \emph{fully-faithful} if the morphism map $F$ is surjective, injective or bijective respectively. A functor $F\colon\mathcal{C}\to \mathcal{D}$ is said to be an \emph{isomorphism} if it is $v$-bijective and fully-faithful. 
	
	Given a category $\mathcal{C}$, a \emph{subcategory} of $\mathcal{C}$ is a category $\mathcal{D}$ whose objects are objects in $\mathcal{C}$ and whose morphisms are morphisms in $\mathcal{C}$, with the same identities and composition of morphisms. Given a category $\mathcal{C}$ with a subcategory $\mathcal{D}$, we define the inclusion functor $J\colon\mathcal{D}\to\mathcal{C}$ as follows:
	$$vJ(d)=d\quad\text{and}\quad J(g)=g$$
	for each object $d\in v\mathcal{D}$ and each morphism $g\in \mathcal{D}$. A subcategory $\mathcal{D}$ of a category $\mathcal{C}$ is said to be \emph{full} if the inclusion functor $J$ is full.
	
	If $F$ and $G$ are functors between the categories $\mathcal{C}$ and $\mathcal{D}$, then a \emph{natural transformation} $\sigma$ from $F$ to $G$ is a family of morphisms in $\mathcal{D}$ such that to every object $c$ in $\mathcal{C}$, we associate a morphism $\sigma(c) \colon F(c) \to G(c)$ called the \emph{component of $\sigma$ at $c$} so that the following diagram commutes for every $g\colon c\to d$ in $\mathcal{C}$:
	\begin{equation*}\label{natf}
	\xymatrixcolsep{4pc}\xymatrixrowsep{3pc}\xymatrix
	{
		F(c) \ar[r]^{\sigma(c)} \ar[d]_{F(g)}  
		& G(c) \ar[d]^{G(g)} \\       
		F(d) \ar[r]^{\sigma(d)} & G(d) 
	}
	\end{equation*}
	If every component $\sigma(c)$ is an isomorphism in $\mathcal{D}$, we say $\sigma$ is a \emph{natural isomorphism} and then the functors $F$ and $G$ are said to be \emph{naturally isomorphic}. 
	
	Natural transformations may be seen as morphisms between functors, and so the class of all functors between two categories, say $\mathcal{C}$ and $\mathcal{D}$, forms a category called the \emph{functor category} $[\mathcal{C},\mathcal{D}]$. Recall that $\mathbf{Set}$ stands for the category whose object class is the class of all sets and whose morphisms are all functions between sets. Given a category $\mathcal{C}$, we define a category $\mathcal{C}^*=[\mathcal{C},\mathbf{Set}]$ with the object class as the class of all functors from $\mathcal{C}$ to the category $\mathbf{Set}$ and natural transformations as morphisms.
	
	Given a locally small category $\mathcal{C}$, the set $\mathcal{C}(c,d)$ of morphisms between any two objects $c$ and $d$ in $\mathcal{C}$, gives rise to certain important functors in $\mathcal{C}^*$ called \emph{hom-functors}. For a fixed object $a\in v\mathcal{C}$, the hom-functor $\mathcal{C}(a,-)$ is defined as:
	\begin{itemize}
		\item the object function $v\mathcal{C}(a,-)$ maps each object $c\in v\mathcal{C}$ to the set of morphisms $\mathcal{C}(a,c)$;
		\item  the morphism function $\mathcal{C}(a,-)$ maps each morphism $f\in \mathcal{C}(c,d)$ to the function $\mathcal{C}(a,f)\colon\mathcal{C}(a,c)\to \mathcal{C}(a,d)$ given by $g\mapsto gf$ for each $g\in \mathcal{C}(a,c)$.
	\end{itemize}
	
	An arbitrary functor $F\colon\mathcal{C}\to \mathbf{Set}$ is said to be \emph{representable} if it is naturally isomorphic to the hom-functor $\mathcal{C}(a,-)$ for some object $a\in v\mathcal{C}$. Then we say that the object $a$ is the \emph{representing object of $F$}. The \emph{Yoneda lemma} states that the natural transformations $\mathcal{C}^*(\mathcal{C}(a,-),\mathcal{C}(b,-))$ between two hom-functors $\mathcal{C}(a,-)$ and $\mathcal{C}(b,-)$ are in one-to-one correspondence with the morphisms (in the reverse direction) between the associated objects, i.e., with the set $\mathcal{C}(b,a)$. Hence, the above discussion may be used to characterise morphisms between representable functors in terms of the morphisms between the representing objects.
	
	\subsection{Biordered sets}
	
	As mentioned in the Introduction, Nambooripad's initial approach \cite{kssthesis,ksssf,ksssf1,mem} to study a regular semigroup $S$ relied heavily on the information about the semigroup captured by its set of idempotents $E(S)$. Observe that for $e,f \in E(S)$, we can define a quasi-orders $\omega^r$ and $\omega^l$ on $E(S)$ as follows:
	$$e \omega^l f\iff ef=e \iff Se \subseteq Sf; \text{  and  } e \omega^r f \iff fe=e \iff eS \subseteq fS.$$
	Then clearly the restrictions of the Green relations on the idempotents of the semigroup are given by $\mathscr{L}=\omega^l\cap(\omega^l)^{-1}$ and $\mathscr{R}=\omega^r\cap(\omega^r)^{-1}$. Also the natural partial order $\omega$ on $E(S)$ is given by $\omega=\omega^l\cap\omega^r$.
	
	Nambooripad showed that the set of idempotents of a (regular) semigroup has an inherent structure of a \emph{(regular) biordered set}. A biordered set $E$ was abstractly axiomatised as a partial algebra\footnote{The original definition of a biordered set by Nambooripad \cite{kssthesis} was not as a partial algebra, instead it relied on a family of partial translations. Later Clifford \cite{clifbi} showed that biordered sets can be replaced by partial bands and subsequently in \cite{mem}, the partial algebra approach was adopted.} (i.e., a set with a partial binary composition defined on it) whose partial binary composition was determined by the two quasi-orders $\omega^l$ and $\omega^r$. The domain $D_E$ of the partial composition of a biordered set was given by $$D_E= (\omega^l\cup\omega^r)\cup(\omega^l\cup\omega^r)^{-1}$$ 
	and it satisfied certain axioms \cite{mem}.

	Given two biordered sets $E$ and $E'$ with the domain of partial compositions $D_E$ and $D_{E'}$ respectively, we can define a \emph{bimorphism} as a mapping $\theta\colon E\to E'$ satisfying:
	\begin{enumerate}
		\item [(BM1)] $(e,f)\in D_E \implies (e\theta,f\theta)\in D_{E'}.$
		\item [(BM2)] $(ef)\theta = (e\theta)(f\theta).$ 
	\end{enumerate}
	
	Further, for any pair of elements $e,f$ in a biordered set $E$, there is a quasiordered set $M(e,f) =\omega^l(e)\cap\omega^r(f)$ with the relation $\preceq$ as follows:
	$$g\preceq h \iff eg\omega^r eh\text{ and }gf\omega^l hf.$$
	Then the \emph{sandwich set} $\mathcal{S}(e,f)$ of $e$ and $f$ is defined as the set of maximal elements of $M(e,f)$ with respect to the quasiorder $\preceq$, i.e.,
	$$\mathcal{S}(e,f)=\{h\in M(e,f) : g\preceq h \text{ for all } g\in M(e,f) \}.$$
	
	The biordered set $E$ is called regular if $\mathcal{S}(e,f)$ is non empty. The sandwich set $\mathcal{S}(e,f)$ in a biordered set may be seen as the regular semigroup analog of the meet $e\wedge f$ of any two idempotents in the semilattice of an inverse semigroup. Thus Nambooripad was able to generalise Munn's construction of fundamental inverse semigroups to fundamental regular semigroups by replacing semilattices with regular biordered sets. Later in 1986, Easdown \cite{eas} showed that arbitrary (not necessarily regular) biordered sets come from semigroups, in the sense that given a biordered set $E$, there exists a semigroup $S$ such that the biordered set of $S$ is biorder isomorphic to $E$. This has unequivocally established the importance of biordered sets in the structure of semigroups.

	\subsection{Inductive groupoids and structure mappings}
	As discussed earlier, Nambooripad \cite{kssthesis,ksssf1} had also extended Schein's approach of inverse semigroups based on ordered groupoids, to construct arbitrary regular semigroups from regular biordered sets and groupoids.
	\begin{dfn}\label{og}
		Let $\mathcal{G}$ be a groupoid and $\leq$ a partial order on $\mathcal{G}$.  Let $e,f \in v\mathcal{G}$ and $x,y$ etc denote arbitrary morphisms of $\mathcal{G}$ such that $\mathbf{d}(x)$ and $\mathbf{r}(x)$ is the domain and codomain respectively of an arbitrary morphism $x$. Then $(\mathcal{G},\leq)$ is called an \emph{ordered groupoid} if the following hold.
		\begin{enumerate}
			\item [(OG1)] If $u\leq x$, $v\leq y$ and $\mathbf{r}(u)=\mathbf{d}(v)$, $\mathbf{r}(x)=\mathbf{d}(y)$, then $uv \leq xy$.
			\item [(OG2)] If $x\leq y$, then $x^{-1}\leq y^{-1}$.
			\item [(OG3)] If $1_e\leq 1_{\mathbf{d}(x)}$, then there exists a unique element $e{\downharpoonleft} x$ (called the \emph{restriction} of $x$ to $e$) in $\mathcal{G}$ such that $e{\downharpoonleft} x\leq x$ and $\mathbf{d}(e{\downharpoonleft} x) = e$.
			\item [(OG3$^*$)] If $1_f\leq 1_{\mathbf{r}(x)}$, then there exists a unique element $x{\downharpoonright} f$ (called the \emph{corestriction} of $x$ to $f$) in $\mathcal{G}$ such that $x{\downharpoonright} f \leq x$ and $\mathbf{r}(x{\downharpoonright} f) = f$.
		\end{enumerate}
	\end{dfn}
	
	Observe that in the above definition, the axiom (OG3$^*$) is the dual of the axiom (OG3). In fact it can be shown that if $(\mathcal{G},\leq)$ is a groupoid satisying axioms (OG1) and (OG2), then  $(\mathcal{G},\leq)$ satisfies (OG3) if and only if $(\mathcal{G},\leq)$ satisfies (OG3$^*$).	
	
	A functor $F$ between two ordered groupoids is said to be a $v$-isomorphism if the object map $vF$ is an order isomorphism. 
	
	The ordered groupoids of inverse semigroups are completely characterised by the property that the partially ordered set of identities (equivalently the set of objects) forms a semilattice \cite{lawson}. But even if we replace the set of objects of an ordered groupoid with a regular biordered set, it is not yet sufficient to construct arbitrary regular semigroups. As remarked in \cite{knr}, this is because the global structure of the semigroup is still not sufficiently reflected on the groupoid. So, Nambooripad added an additional layer of (biorder) structure to this groupoid as follows.
	
	Given a regular biordered set $E$, Nambooripad defined an \emph{E-path} as a sequence of elements $( e_1, e_2 , \dotsc , e_n )$ of $E$ such that $e_i ( \mathscr{R}\cup\mathscr{L} ) e_{i + 1}$ for all $i = 1 , \dots , n-1$. An idempotent $e_i$ in an E-path $( e_1 , e_2 ,\dots , e_n )$ is \emph{inessential} if $e_{i-1}\mathscr{R} e_i \mathscr{R} e_{i + 1}$ or $e_{i-1}\mathscr{L} e_i \mathscr{L} e_{i + 1}$. We can define an equivalence relation on the set of E-paths by adding or removing inessential vertices. The equivalence class of an E-path relative to this equivalence relation is defined to be an \emph{E-chain} and it can be seen that every E-chain has a unique canonical representative of the form $\mathbf{c}( e_1 , e_2 , \dots , e_n )$ where each vertex is essential. 
	
	For instance, consider the following example. In the sequel, we shall often use diagrams (as below) to represent the structural scenario in semigroups or biordered sets, wherein a horizontal line represents the Green $\mathscr{R}$ relation and a vertical line represents the Green $\mathscr{L}$ relation. In Figure \ref{figechain}, the elements of a biordered set $E$ are represented using dots. Here the elements $e_2$ and $e_4$ are inessential in the E-path $(e_1,e_2,e_3,e_4,e_5)$ of $E$. So the canonical representation of the corresponding E-chain is $\mathbf{c}(e_1,e_3,e_5)$. In the sequel whenever there is no risk of confusion, by abuse of notation we shall denote an $E$-chain $\mathbf{c}( e_1 , e_2 , \dots , e_n )$ by just $( e_1 , e_2 , \dots , e_n )$.
	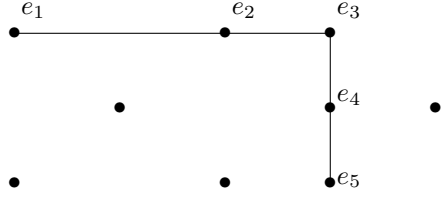
\begin{figure}
		\begin{center}
			\begin{tikzpicture}[node distance=0 cm,outer sep = 0pt]
			\tikzstyle{grp}=[draw, rectangle, minimum height=1cm, minimum width=1.4cm, fill=violet!0,draw=none,anchor=south west]
			\tikzstyle{dc}=[draw, rectangle, font={\tiny}, minimum height=1cm, minimum width=1.4cm, fill=violet!0,draw=none,anchor=south west]
			
			\node[grp] (s11) at (5,3) {$\bullet$};
			\node[dc] (s12) [right = of s11] {};
			\node[grp] (s13) [right = of s12] {$\bullet$};
			\node[grp] (s14) [right = of s13] {$\bullet$};
			\node[dc] (s15) [right = of s14] {};
			\node[dc] (s21) [below = of s11] {};
			\node[grp] (s22) [right = of s21] {$\bullet$};
			\node[dc] (s23) [below = of s13] {};
			\node[grp] (s24) [right = of s23] {$\bullet$};
			\node[grp] (s25) [right = of s24] {$\bullet$};
			\node[grp] (s31) [below = of s21] {$\bullet$};
			\node[dc] (s32) [right = of s31] {};
			\node[grp] (s33) [right = of s32] {$\bullet$};
			\node[grp] (s34) [right = of s33] {$\bullet$};
			\node[dc] (s35) [right = of s34] {};
			
			\node (e1) at (5.95,3.8) {$e_1$};
			\node (e2) at (8.75,3.8) {$e_2$};
			\node (e3) at (10.15,3.8) {$e_3$};
			\node (e4) at (10.15,2.65) {$e_4$};
			\node (e5) at (10.15,1.55) {$e_5$};
			
			\draw[-] (s11.center) to (s14.center); 
			\draw[-] (s14.center) to (s34.center);
			\end{tikzpicture}
			\caption{E-chain in a biordered set}\label{figechain}
		\end{center}
		
	\end{figure}
	
	The set $\mathcal{G} ( E )$ of E-chains forms a groupoid with set $E$ of objects and with an E-chain $ ( e_1 , e_2 , \dots , e_n )$ as a morphism from $e_1$ to $e_n$. The product of two E-chains $ ( e_1 , e_2 , \dots , e_n )$ and $ ( f_1 , f_2 , \dots , f_m )$ is defined if and only if $e_n = f_1$ and is equal to the canonical representative of $ ( e_1 , \dots , e_n = f_1 , \dots , f_m )$ . The inverse of $ ( e_1 , e_2 , \dots , e_n )$ is the $E$-chain  $ ( e_n , \dots , e_2 , e_1 )$. 
	
	Further for an $E$-chain $\mathfrak{c}=  (e_0,e_1,\dots,e_n)\in\mathcal{G} ( E )$ and $h\in \omega(e_0)$, let
	$$h\cdot \mathfrak{c}=  (h,h_0,h_1,\dots,h_n) \text{ where } h_0=he_0 \text{ and }h_i=e_ih_{i-1}e_i \text{ for } i=1,\dots,n.$$
	
	Then for $\mathfrak{c},\mathfrak{c}'\in \mathcal{G}(E)$, if we define a partial order on $\mathcal{G} ( E )$ as follows: 
	$$\mathfrak{c}\leq_E \mathfrak{c}' \iff \mathbf{d}(\mathfrak{c})\omega \mathbf{d}(\mathfrak{c}') \text{ and }\mathfrak{c}= \mathbf{d}(\mathfrak{c})\cdot \mathfrak{c}'$$ 
	and define restriction $h{\downharpoonleft}\mathfrak{c}=h\cdot \mathfrak{c}$ for all $h\in \omega(\mathbf{d}(\mathfrak{c}))$, then $(\mathcal{G} ( E ),\leq_E)$ forms an ordered groupoid called the \emph{groupoid of E-chains} of the biordered set $E$.
	
	Given a biordered set $E$, a $2 \times 2$ matrix $\bigl[ \begin{smallmatrix} e&f\\ g&h \end{smallmatrix} \bigr]$
	of elements of $E$ such that $e\mathscr{R} f\mathscr{L}h\mathscr{R} g\mathscr{L}e$ is known as an \emph{E-square}. Observe that using the convention introduced in Figure \ref{figechain}, an E-square will in fact be represented by a square. 
	
	An E-square of the form  $\bigl[ \begin{smallmatrix} g&h\\ eg&eh \end{smallmatrix} \bigr]$  
	where $g,h \in \omega^l(e)$ and $g\mathscr{R} h$ is said to be row-singular. Dually, an E-square $\bigl[ \begin{smallmatrix} g&ge\\ h&he \end{smallmatrix} \bigr]$
	is said to be column-singular if $g,h \in \omega^r(e)$ and $g\mathscr{L} h$. An E-square is said to be \emph{singular} if it is either row-singular or column-singular.

	Let $E$ be a regular biordered set and $\epsilon\colon \mathcal{G}(E)\to \mathcal{G}$ be a $v$-isomorphism of $\mathcal{G}(E)$ into an ordered groupoid $\mathcal{G}$. Then an E-square $\bigl[ \begin{smallmatrix} e&f\\ g&h \end{smallmatrix} \bigr]$
	is said to be $\epsilon$-commutative if 
	$$\epsilon(e,f)\epsilon(f,h) =\epsilon(e,g)\epsilon(g,h).$$
	Observe that in the above statement to simplify notation, given an $E$-chain $(e,f)$ we have used $\epsilon(e,f)$ instead of $\epsilon((e,f))$. This convention shall be followed in the sequel.
	
	Using this additional layer of structure provided by the ordered groupoid $\mathcal{G}(E)$ of E-chains and a $v$-isomorphism  $\epsilon\colon \mathcal{G}(E)\to \mathcal{G}$ called an \emph{evaluation functor} from $\mathcal{G}(E)$ into an ordered groupoid $\mathcal{G}$, Nambooripad defined an inductive groupoid as follows.
	\begin{dfn}\label{dfnig}
		Let $E$ be a regular biordered set and $\epsilon $ be an evaluation functor from $\mathcal{G}(E)$ into an ordered groupoid $\mathcal{G}$. We say that $(\mathcal{G},\epsilon )$ forms an \emph{inductive groupoid} if the following axioms and their duals hold.
		\begin{enumerate}
			\item[(IG1)] Let $x\in \mathcal{G}$ and for $i=1,2$, let $e_i$, $f_i \in E$ such that $\epsilon (e_i) \leq \mathbf{d}(x)$ and $\epsilon (f_i) = \mathbf{r}(\epsilon (e_i){\downharpoonleft} x)$. If $e_1\omega^r e_2$, then $f_1\omega^r f_2$, and
			$$\epsilon (e_1,e_1e_2)(\epsilon (e_1e_2){\downharpoonleft} x) = (\epsilon (e_1){\downharpoonleft} x)\epsilon (f_1,f_1f_2).$$
			\item[(IG2)] All singular E-squares are $\epsilon $-commutative.
		\end{enumerate}
	\end{dfn}
	
	Let $(\mathcal{G},\epsilon )$ and $(\mathcal{G}',\epsilon ')$ be two inductive groupoids with biordered sets $E$ and $E'$ respectively. An order preserving functor $F\colon \mathcal{G} \to\mathcal{G}'$ is said to be \emph{inductive} if $vF\colon E \to E'$ is a bimorphism of biorder sets such that the following diagram commutes.
	\begin{equation}\label{indf}
	\xymatrixcolsep{4pc}\xymatrixrowsep{3pc}\xymatrix
	{
		\mathcal{G}(E) \ar[r]^{\mathcal{G}(vF)} \ar[d]_{\epsilon }  
		& \mathcal{G}(E') \ar[d]^{\epsilon '} \\       
		\mathcal{G} \ar[r]^{F} & \mathcal{G}' 
	}
	\end{equation}
	
	\subsubsection{Inductive groupoid of a regular semigroup} \label{sssecindrs}
	Nambooripad showed that given a regular semigroup $S$ with biordered set $E$, we can associate an inductive groupoid $(\mathcal{G}(S),\epsilon_S)$ as follows. The set of objects $v\mathcal{G}(S)=E$ and the set of morphisms 
	$$\mathcal{G}(S)=\{(x,x') : x\in S \text{ and } x'\in V(x)\} $$ 
	where recall $V(x)$ denotes the set of inverses of $x$. For $(x, x'), (y,y') \in \mathcal{G}(S)$, the composition is defined
	$$(x, x')(y, y') = (x y, y'x')\text{ if }x'x = yy'. $$
	For $(x, x') \in \mathcal{G}(S)$, $\mathbf{d} (x,x') = (xx', xx ')$ is the left identity, $\mathbf{r}(x,x') = (x'x, x'x)$ is the right identity and
	$(x', x)$ is the inverse. The restriction of $(x,x')$ to $e\in \omega(xx')$ is defined as $e{\downharpoonleft}(x,x')=(ex,x'e)$ and the corestriction of $(x,x')$ to $f\in \omega(x'x)$ is defined as $(x,x'){\downharpoonright} f=(xf,fx')$. The partial order $\leq_\mathcal{G}$ in $\mathcal{G}(S)$ is given by:
	$$(x,x')\leq_\mathcal{G} (y,y') \iff x=(xx')y, x'=y'(xx') \text{ and } xx'\omega yy'.$$
	This makes $\mathcal{G}(S)$ an ordered groupoid. Since $E$ is a regular biordered set, the groupoid $\mathcal{G}(E)$ of $E$-chains is an ordered groupoid. So, the evaluation functor $\epsilon_S\colon \mathcal{G}(E)\to \mathcal{G}(S)$ is defined as follows. The object map is $v\epsilon_S = 1_E$ and for each E-chain $\mathfrak{c} =  (e_0 , e_1 ,\dotsc e_n )\in \mathcal{G}(E)$ from $e_0$ to $e_n$,
	$$\epsilon_S (\mathfrak{c}) = (e_0 e_1 \dots e_{n-1} e_n ,\: e_n e_{n-1} \dotsc e_{1} e_0).$$
	
	Then it may be verified that $(\mathcal{G}(S),\epsilon_S)$ is an inductive groupoid.
	
	\subsubsection{Regular semigroup of an inductive groupoid} Conversely given the inductive groupoid $(\mathcal{G},\epsilon_\mathcal{G})$, we can define an equivalence relation $p$ on the set of morphisms $\mathcal{G}$  as follows. Given $x ,y \in \mathcal{G}$, 
	$$x \:p\:y  \iff \mathbf{d}(x )\mathscr{R}\mathbf{d}(y ),\:\mathbf{r}(x )\mathscr{L}\mathbf{r}(y )\:\text{ and } x \:\epsilon_\mathcal{G}(\mathbf{r}(x ),\mathbf{r}(y )) = \epsilon_\mathcal{G}(\mathbf{d}(x ),\mathbf{d}(y ))\:y .$$
	
	Then the $p$-classes of morphisms in $\mathcal{G}$ form a regular semigroup under the binary composition as described below. If $\bar{x}$ and $\bar{y}$ denote the $p$-classes of $\mathcal{G}$ containing the morphisms $x$ and $y$ respectively, then for a sandwich element $h\in \mathcal{S}(\mathbf{r}(x),\mathbf{d}(y))$, the binary composition is defined as:
	\begin{equation}\label{eqnbincomp}
	\bar{x}. \bar{y} = \overline{(x {\downharpoonright} \mathbf{r}(x)h)\:\epsilon(\mathbf{r}(x)h,h)\:\epsilon(h,h\mathbf{d}(y))\:(h\mathbf{d}(y) {\downharpoonleft} y)}.
	\end{equation}
	
	The binary composition in (\ref{eqnbincomp}) can be illustrated using Figure \ref{figind} wherein arcs denote the morphisms in the inductive groupoid $\mathcal{G}$. 
	
	This gives the inductive groupoid representation of regular semigroups as described in \cite{mem}. The original description in \cite{kssthesis,ksssf1} of the same idea was using the notion of a 
	\emph{regular groupoid} of a regular semigroup and certain mappings between $\mathscr{L}$-classes and $\mathscr{R}$-classes. 
	It was exactly these mappings which Meakin had called \emph{structure mappings} 
	and used extensively in \cite{stmp0,stmp,stmp1}. One can see that the restrictions and corestrictions  are nothing but reincarnations of the structure mappings.
	
	Thus from the family of structure mappings and the {regular groupoid} associated with a regular semigroup, which Nambooripad collectively called (and axiomatised) as the \emph{regular system}, one could equivalently retrieve the regular semigroup. 		
	
	\begin{figure}
		\centering
		\xymatrixcolsep{1.5pc}\xymatrixrowsep{1.5pc}
		\xymatrix{ 
			\ar@{-}@/^1pc/[rrr]^{x}&&&\mathbf{r}(x) \ar@{.}[ddddr]^{\omega}&&&\mathbf{d}(y) \ar@{.}[dddl]_{\omega}\ar@{-}@/^1pc/[rrr]^{y}&&& \\\\\\
			&&&&h\ar@{-}[r]&h\mathbf{d}(y)\ar@{-}@/^1pc/[rrr]^{h\mathbf{d}(y) {\downharpoonleft} y}&&& \\
			&\ar@{-}@/^1pc/[rrr]^{x {\downharpoonright} \mathbf{r}(x)h}&&&\mathbf{r}(x)h\ar@{-}[u]_{}
		}
		\caption{Binary composition in an inductive groupoid}\label{figind}
	\end{figure}
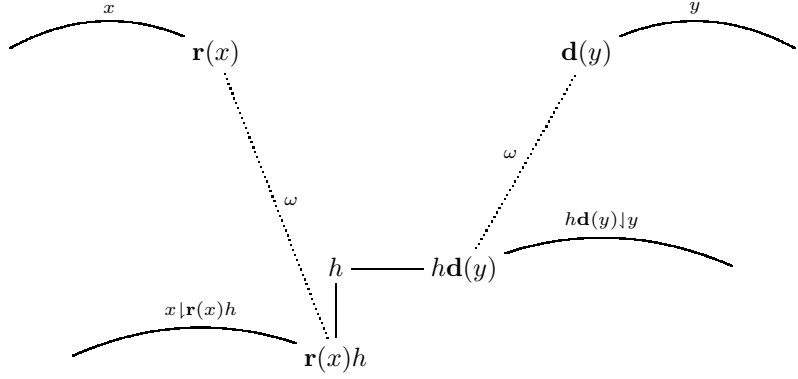
	
	Using this correspondence between groupoids and semigroups, Nambooripad \cite{mem} explicitly showed that the category $\bf{IG}$ of inductive groupoids is equivalent to the category $\bf{RS}$ of regular semigroups. The ESN Theorem may be seen as the special case of this when specialised to inverse semigroups.
	
	\subsection{Regular partially ordered sets and Cross-connections}
	
	In 1973, Grillet \cite{gril,gril1,gril2} constructed fundamental regular semigroups from a cross-connected pair of partially ordered sets. In the process, he characterised the partially ordered sets of regular semigroups as regular partially ordered sets using the idea of a normal mapping. We briefly recall some crucial definitions of Grillet.
	\begin{dfn}\label{dfnnm}
		Let $X=(X,\leq)$ be a partially ordered set  and for $x\in X$, let $X(x)=\{y\in X: y\leq x\}$ be the principal order ideal of $X$ generated by $x$. A mapping $\alpha\colon X\to X$ is said to be a \emph{normal mapping} if:
		\begin{enumerate}
			\item [(Nmap1)] The range Im $\alpha$ of the mapping $\alpha$ is a principal ideal of $X$.
			\item [(Nmap2)] The mapping $\alpha$ is order preserving.
			\item [(Nmap3)] For each $x\in X$, there exists $y\leq x$ such that $\alpha$ maps $X(y)$ isomorphically upon $X(\alpha x)$.
		\end{enumerate}
	\end{dfn}
	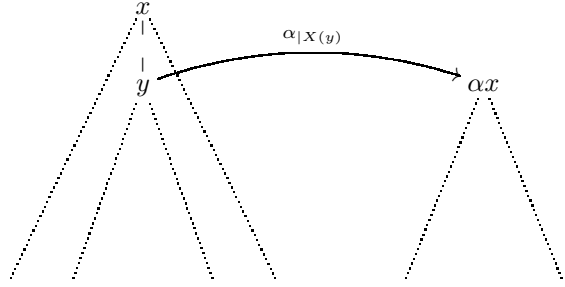
\begin{figure}
		\begin{center}
			\xymatrixcolsep{1.5pc}\xymatrixrowsep{1.5pc}
			\xymatrix{ 
				&&&&&x \ar@{.}[ddddll]^{}\ar@{.}[ddddrr]^{}\ar@{--}[d]^{}\\
				&&&&&y \ar@{.}[dddl]^{}\ar@{.}[dddr]^{} \ar@{->}@/^1pc/[rrrrr]^{\alpha_{|X(y)}}&&&&& \alpha x \ar@{.}[dddl]^{}\ar@{.}[dddr]^{}\\
				\\
				&&&& && &&&  & &\\
				&&& & & & & & & & &\\}
			\caption{Isomorphism in a normal mapping}\label{fignm}
		\end{center}
	\end{figure}
	
	Figure \ref{fignm} represents the condition (Nmap3) of a normal mapping; this property will be crucial in the sequel.	The \emph{apex} of the normal mapping $\alpha$ is the element $a$ of $X$ such that Im $\alpha=X(a)$. An idempotent normal mapping is called a \emph{normal retraction}. A partially ordered set $X$ is said to be \emph{regular} if every element of $X$ is the apex of a normal retraction of $X$. The set $N(X)$ of normal mappings on a regular partially ordered set $X$ forms a regular semigroup.
	
	Further, an equivalence relation on a partially ordered set $X$ is \emph{normal} if it is induced by a normal mapping. The set of all normal equivalence relations on $X$ ordered by reverse inclusion forms the regular partially ordered set $X^*$, known as the \emph{dual} of $X$.
	
	A \emph{cross-connection} between two regular partially ordered sets $I$ and $\Lambda$ is a pair $(\Gamma,\Delta)$ of order preserving mappings $\Gamma\colon \Lambda\to I^*$ and $\Delta\colon I\to \Lambda^*$ satisfying certain axioms \cite{gril1}. 
	
	Grillet showed that given a fundamental regular semigroup, it induces a cross-connection between its partially ordered sets of principal left and right ideals, and conversely every cross-connection gives rise to a fundamental regular semigroup.
	
	Nambooripad's article \cite{bicxn} implied that isomorphic biordered sets determine isomorphic cross-connections and hence Grillet's construction is insufficient to characterize arbitrary regular semigroups. Later, Nambooripad observed that a partially ordered set can be seen as a strict preorder category. Elaborating this idea, he \cite{cross0} replaced Grillet's regular partially ordered sets with \emph{normal categories}, and constructed arbitrary regular semigroups as cross-connection semigroups. 
	
	\subsection{Nambooripad's Cross-connections} \label{cxns} Now we briefly recall Nambooripad's construction \cite{cross}. In the sequel, unless otherwise stated the term ``cross-connection'' shall refer to Nambooripad's generalised version with categories.  
	
	Let $\mathcal{C}$ be a small category and $\mathcal{P}$ be a subcategory of $\mathcal{C}$ such that $\mathcal{P}$ is a strict preorder with $v\mathcal{P} = v\mathcal{C}$. Then $(\mathcal{C} ,\mathcal{P})$ is called a \emph{category with subobjects}\index{category!with subobjects} if, first, every $f\in \mathcal{P}$ is a monomorphism in $\mathcal{C}$, second, and if $f=hg$ for $f,g\in \mathcal{P}$ and $h \in \mathcal{C}$, then $h\in \mathcal{P}$.
	
	In a category $(\mathcal{C} ,\mathcal{P})$ with subobjects, the morphisms in $ \mathcal{P}$ are called inclusions. If $c' \to c$ is an inclusion, we write $c'\subseteq c$ and we denote this inclusion by $j(c',c)$. An inclusion $j(c',c)$ \emph{splits} if there exists $q\colon c\to c' \in \mathcal{C}$ such that $j(c',c)q =1_{c'}$. Then the morphism $q$ is called a \emph{retraction}. 
	
	A \emph{normal factorization} of a morphism $f \in \mathcal{C}(c,d)$ is a
	factorization of the form $f=quj$ where $q\colon c\to c'$ is a retraction,
	$u\colon c'\to d'$ is an isomorphism and $j=j(d',d)$ an inclusion where $c',d' \in v\mathcal{C}$ with $ c' \subseteq c$, $ d' \subseteq d$. The morphism $qu$ is known as the \emph{epimorphic component} of the morphism $f$ and is denoted by $f^\circ$. Figure \ref{fignf} represents the normal factorisation property. Compare it with Figure \ref{fignm} to observe that normal factorisation is a generalisation of the property (Nmap3) of a normal mapping.
	
	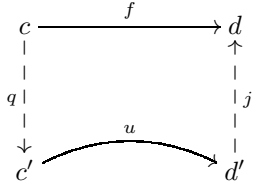
\begin{figure}
		\centering
		\xymatrixcolsep{1.5pc}\xymatrixrowsep{1.5pc}
		\xymatrix{ 
			&&&&&c \ar@{->}[rrr]^{f} \ar@{-->}[dd]_{q}&&&d\\
			\\
			&&&&&c' \ar@{->}@/^1pc/[rrr]^{u} &&&d'\ar@{-->}[uu]_{j}
		}
		\caption{Normal factorisation of a morphism $f$}\label{fignf}
	\end{figure}
	
	\begin{dfn}\label{dfnnc}
		Let $\mathcal{C}$ be a category with subobjects and $d\in v\mathcal{C}$. A map $\gamma\colon v\mathcal{C}\to\mathcal{C}$ is called a \emph{normal cone} from the base $v\mathcal{C}$ to the apex\footnote{In \cite{cross}, the terminogy used for apex is ``vertex'' but we avoid it as it may lead to some confusion with the vertices of a category.} $d$ if 
		\begin{enumerate}
			{
				\item[(Ncone1)] $\gamma(c)\in \mathcal{C}(c,d)$ for all $c\in v\mathcal{C}$. 
				\item[(Ncone2)] If $c\subseteq c'$ then $j(c,c')\gamma(c') = \gamma(c)$. 
				\item[(Ncone3)] There exists $c\in v\mathcal{C}$ such that $\gamma(c)\colon c\to d$ is an isomorphism.
			}
		\end{enumerate}
	\end{dfn}
	
	Given a normal cone $\gamma$, we denote by $c_{\gamma}$ the {apex} of $\gamma$ and for each $c\in v\mathcal{C}$, the morphism $\gamma(c)\colon  c \to c_{\gamma}$ is called the {component} of $\gamma$ at $c$.  
	\begin{dfn}
		A category $\mathcal{C}$ with subobjects is called a \emph{normal category} if the following holds.
		\begin{enumerate}
			\item [(NC1)]Any morphism in $\mathcal{C}$ has a normal factorization. 
			\item [(NC2)]Every inclusion in $\mathcal{C}$ splits.
			\item [(NC3)]For each $c \in v\mathcal{C} $ there is a normal cone $\gamma$ with apex $c$ and $\gamma (c) = 1_c$.
		\end{enumerate}
	\end{dfn}
	Observe that given a normal cone $\gamma$ and an epimorphism $f\colon c_\gamma \to d$, the map $\gamma*f \colon  a \mapsto\gamma(a)f$ from $v\mathcal{C}$ to $\mathcal{C}$ is a normal cone with apex $d$. Hence, given two normal cones $\gamma$ and $\sigma $, we can compose them as follows.
	\begin{equation} \label{eqnsg1}
	\gamma \cdot \sigma = \gamma*(\sigma(c_\gamma))^\circ 
	\end{equation} 
	where $(\sigma(c_\gamma))^\circ$ is the epimorphic part of the morphism $\sigma(c_\gamma)$. All the {normal cones} in a normal category $\mathcal{C}$ with this special binary composition form a regular semigroup known as the \emph{semigroup of normal cones} in $\mathcal{C}$ and is denoted by $T\mathcal{C}$. It can be easily seen that a normal cone $\gamma\in T\mathcal{C}$ is an {idempotent} if and only if $\gamma(c_\gamma)= 1_{c_\gamma}$.
	
	To describe cross-connections, Grillet \cite{gril} used the set of all {normal} equivalence relations on a {regular} partially ordered set. To extend this idea to normal categories, Nambooripad proposed the notion of a {normal dual}. The \emph{normal dual} $N^\ast\mathcal C$ of a normal category $\mathcal{C}$ is a full subcategory of the category $\mathcal{C}^\ast$ of all functors from $\mathcal C$ to $\bf{Set}$. The objects of $N^\ast\mathcal C$ are functors called $H$-functors and the morphisms are natural transformations between them. 
	
	For each $\gamma \in T\mathcal{C}$, the \emph{H-functor} $H({\gamma};-)\colon  \mathcal{C}\to \mathbf{Set}$ is defined as follows. For each $c\in v\mathcal{C}$ and for each $g\in \mathcal{C}(c,d)$,
	\begin{subequations} \label{eqnH11}
		\begin{align}
		H({\gamma};{c})&= \{\gamma\ast f^\circ : f \in \mathcal{C}(c_{\gamma},c)\} \text{ and }\\
		H({\gamma};{g}) \colon H({\gamma};{c}) &\to H({\gamma};{d}) \text{ given by }\gamma\ast f^\circ \mapsto \gamma\ast (fg)^\circ.
		\end{align}
	\end{subequations}
	We define the \emph{M-set} of a normal cone $\gamma$ as 
	\begin{equation*}
	M\gamma = \{ c \in \mathcal{C}:\gamma(c)\text{ is an isomorphism} \}.
	\end{equation*} 
	It can be shown that if $H({\gamma};-) = H({\gamma'};-)$, then the $M$-sets of the normal cones $\gamma$ and $\gamma'$ coincide; and hence we define the $M$-set of an $H$-functor as $MH(\gamma;-) = M\gamma$.
	
	It can be seen that $H$-functors are {representable functors} such that for a normal cone $\gamma$ with apex $d$, there is a natural isomorphism $\eta_\gamma\colon  H(\gamma;-) \to \mathcal{C}(d,-)$. Here $\mathcal{C}(d,-)$ is the hom-functor determined by $d\in v\mathcal{C}$. 
	
	An \emph{ideal} $\langle c \rangle$ of a normal category $\mathcal{C}$ is the full subcategory of $\mathcal{C}$ whose objects are given by
	$$v\langle c \rangle =\{d\in v\mathcal{C}: d\subseteq c\}.$$
	
	A functor $F$ between two normal categories $\mathcal{C}$ and $\mathcal{D}$ is said to be a \emph{local isomorphism}\index{local isomorphism} if $F$ is inclusion preserving, fully faithful and for each $c \in v\mathcal{C}$, $F_{|\langle c \rangle}$ is an isomorphism of the ideal $\langle c \rangle$ onto $\langle F(c) \rangle$.
	\begin{dfn} \label{ccxn}
		Let $\mathcal{C}$ and $\mathcal{D}$ be normal categories. A \emph{cross-connection}\index{cross-connection} from $\mathcal{D}$ to $\mathcal{C}$ is a triplet $(\mathcal{D},\mathcal{C};{\Gamma})$ where $\Gamma\colon  \mathcal{D} \to N^\ast\mathcal{C}$ is a local isomorphism such that for every $c \in v\mathcal{C}$, there is some $d \in v\mathcal{D}$ such that $c \in M\Gamma(d)$.
	\end{dfn}
	In the sequel, when there is no ambiguity we shall often refer to a cross-connection $(\mathcal{D},\mathcal{C};{\Gamma})$ by just $\Gamma$. Then the set $E_\Gamma$ is defined by
	$$E_\Gamma = \{ (c,d) \in v\mathcal{C}\times v\mathcal{D} \text{ such that } c\in M\Gamma(d) \}.$$
	
	Given a cross-connection $(\mathcal{D},\mathcal{C};{\Gamma})$, there always exists a unique \emph{dual cross-connection} $\Delta$ from $\mathcal{C}$ to $N^\ast\mathcal{D}$. It may be seen that $(c,d) \in E_\Gamma$ if and only $(d,c) \in E_\Delta$. Also, if $(c,d)\in E_\Gamma$, then the unique idempotent cone $\epsilon \in T\mathcal{C}$ such that $c_\epsilon=c$ and $H(\epsilon;-)=\Gamma(d)$ shall be denoted by $\gamma(c,d)$. Similarly $\delta(c,d)$ shall denote the unique idempotent cone in $T\mathcal{D}$ when $(d,c)\in E_\Delta$. Further by suitably defining the basic products and quasi-orders, the set $E_\Gamma$ can be shown to be the regular biordered set associated with the cross-connection $\Gamma$.
	
	Given a cross-connection $(\mathcal{D},\mathcal{C};{\Gamma})$ with dual $\Delta$, let $(c,d),(c',d') \in E_\Gamma$, $f\in \mathcal{C}(c,c')$ and $g\in \mathcal{D}(d',d)$. Then $f$ is called the \emph{transpose} of $g$ from $c$ to $c'$, if $f$ and $g$ makes the following diagram commute:
	\begin{equation*}\label{trans}
	\xymatrixcolsep{2pc}\xymatrixrowsep{3pc}\xymatrix
	{
		\Delta(c) \ar[rr]^{\eta_{\delta(c,d)}} \ar[d]_{\Delta(f)}  
		& & \mathcal{D}(d,-) \ar[d]^{\mathcal{D}(g,-)}& d \\       
		\Delta(c') \ar[rr]^{\eta_{\delta(c',d')}} & & \mathcal{D}(d',-)&d'\ar[u]_{g} 
	}
	\end{equation*}
	
	\begin{dfn}\label{mor}
		Let $(\mathcal{D},\mathcal{C};{\Gamma})$ and $(\mathcal{D}',\mathcal{C}';{\Gamma}')$ be two cross-connections. A \emph{morphism of cross-connections} $m\colon \Gamma\to \Gamma'$ is a pair $m=(F_m,G_m)$ of inclusion preserving functors $F_m\colon \mathcal{C}\to \mathcal{C}'$ and $G_m\colon \mathcal{D}\to \mathcal{D}'$ satisfying the following axioms:
		\begin{enumerate}
			\item [(M1)] $(c,d)\in E_\Gamma \implies (F_m(c),G_m(d)) \in E_{\Gamma'}$ and for all $c'\in v\mathcal{C}$,
			$F_m(\gamma(c,d)(c'))=\gamma(F_m(c),G_m(d))(F_m(c'))$ .
			\item [(M2)] If $(c,d), (c',d') \in E_\Gamma$ and if $f^*\colon d'\to d$ is the transpose of $f\colon c\to c'$, then $G_m(f^*) = (F_m(f))^*$.
		\end{enumerate}  
	\end{dfn}
	\subsubsection{Cross-connections of a regular semigroup}\label{cxnrs} Given a regular semigroup $S$ with biordered set $E$, we can associate a normal category $\mathcal{L}_S$ of principal left ideals as follows. An object of the category $\mathcal{L}_S$ is a principal left ideal $Se$ for $e\in E$, and a morphism from $Se$ to $Sf$ is a partial right translation $\rho(e,u,f)\colon  u \in eSf$. That is, for $x\in Se$, the morphism $\rho(e,u,f)\colon x \mapsto xu \in Sf$. Two morphisms $\rho(e,u,f)$ and $\rho(g,v,h)$ are equal in $\mathcal{L}_S$ if and only if $e \mathscr{L} g$, $f\mathscr{L} h$, $u \in eSf$, $v\in gSh$ and $u=ev$.
	
	Given an arbitrary element $a \in S$, it induces certain \emph{principal cones} $\rho^a$ with apex $Sa$ whose component at any $Se \in v\mathcal{L}_S$ is given by $\rho^a(Se) = \rho(e,ea,f) $ where $f \in E(L_a)$. Then the $M$-set of $\rho^a$,
	$$M\rho^a=\{Se:e\in E(R_a)\}.$$
	
	Dually, we have a normal category $\mathcal{R}_S$ of principal right ideals of the semigroup $S$ such that an object of the category $\mathcal{R}_S$ is a principal right ideal $eS$, and the morphisms are partial left translations $\lambda(e,w,f)$ such that $w \in fSe$. The principal cones in $\mathcal{R}_S$ are given by $\lambda^a(eS) = \lambda(e,ae,f) $ where $f \in E(R_a)$.  Given two morphisms $\lambda(e,u,f)$ and $\lambda(g,v,h)$ in the category $\mathcal{R}_S$, they are equal if and only if $e \mathscr{R} g$, $f\mathscr{R} h$, $u \in fSe$, $v\in hSg$ and $u=ve$.
	
	Further, the following theorem describes the explicit relationship between the normal categories of a regular semigroup $S$: the categories $\mathcal{L}_S$ and $\mathcal{R}_S$ are cross-connected by a functor $\Gamma_S$.
	\begin{thm}\cite[Theorem IV.17]{cross}\label{thmcxn}
		The functor $\Gamma_S\colon  \mathcal{R}_S \to N^*\mathcal{L}_S$ defined by
		\begin{equation} 
		v\Gamma_S(eS) = H(\rho^e;-) \quad\text{ and }\quad \Gamma_S(\lambda(e,u,f)) = \eta_{\rho^e}\mathcal{L}_S(\rho(f,u,e),-)\eta_{\rho^f}^{-1},
		\end{equation}
		is a cross-connection such that it induces a \emph{dual cross-connection} $(\mathcal{L}_S,\mathcal{R}_S;\Delta_S)$ defined by the functor $\Delta_S\colon  \mathcal{L}_S \to N^*\mathcal{R}_S$ as follows:
		\begin{equation}
		v\Delta_S(Se) = H(\lambda^e;-) \quad\text{ and }\quad \Delta_S(\rho(e,u,f)) = \eta_{\lambda^e}\mathcal{R}_S(\lambda(f,u,e),-)\eta_{\lambda^f}^{-1}.
		\end{equation}
	\end{thm}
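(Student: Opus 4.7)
The plan is to verify the three requirements of Definition~\ref{ccxn} for the triple $(\mathcal{R}_S,\mathcal{L}_S;\Gamma_S)$: that $\Gamma_S$ is a well-defined functor to $N^*\mathcal{L}_S$, that it is a local isomorphism, and that every object of $\mathcal{L}_S$ lies in the $M$-set of some image $\Gamma_S(eS)$. The dual cross-connection $\Delta_S$ will then follow by the formally symmetric argument.

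First I would unpack the assignments. Since $\rho^e$ is a principal cone in $\mathcal{L}_S$ with apex $Se$, the $H$-functor $H(\rho^e;-)$ is an object of $N^*\mathcal{L}_S$ by construction, and by the representability of $H$-functors recalled after \eqref{eqnH11} there is a natural isomorphism $\eta_{\rho^e}\colon H(\rho^e;-)\to\mathcal{L}_S(Se,-)$. Thus $\Gamma_S$ essentially sends $eS$ to the hom-functor $\mathcal{L}_S(Se,-)$ up to this fixed natural iso. For morphisms, the assignment $\lambda(e,u,f)\mapsto\rho(f,u,e)$ provides a dualising correspondence from $\mathcal{R}_S(eS,fS)$ into $\mathcal{L}_S(Sf,Se)$; composing the Yoneda natural transformation $\mathcal{L}_S(\rho(f,u,e),-)$ with the conjugating isomorphisms $\eta_{\rho^e}$ and $\eta_{\rho^f}^{-1}$ yields the natural transformation $\Gamma_S(\lambda(e,u,f))$. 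Two routine checks are needed: well-definedness under the equality criterion for morphisms of $\mathcal{R}_S$, and preservation of identities and composition, which via Yoneda reduces to the identity $\rho(g,vw,e)=\rho(g,v,f)\rho(f,w,e)$ in $\mathcal{L}_S$ corresponding to $\lambda(e,wv,g)=\lambda(e,w,f)\lambda(f,v,g)$ in $\mathcal{R}_S$.

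Next, to show that $\Gamma_S$ is a local isomorphism I would verify inclusion preservation, full faithfulness, and the ideal-isomorphism clause. Inclusion preservation is a direct computation: the inclusion morphism in $\mathcal{R}_S$ between two principal right ideals, transported through the formula for $\Gamma_S$, yields the corresponding inclusion of $H$-functors in $N^*\mathcal{L}_S$. Faithfulness is inherited from the injective correspondence $\lambda(e,u,f)\leftrightarrow\rho(f,u,e)$ combined with the faithfulness of the Yoneda embedding. For the ideal-isomorphism clause, I would fix $eS$ and observe that $\Gamma_S$ sends the principal ideal $\langle eS\rangle$ of $\mathcal{R}_S$ bijectively onto the ideal $\langle H(\rho^e;-)\rangle$ of $N^*\mathcal{L}_S$; fullness on this ideal amounts, after transport across $\eta$, to the Yoneda lemma applied inside the ideal $\langle Se\rangle$ of $\mathcal{L}_S$. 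Finally, the existence clause of Definition~\ref{ccxn} demands, for each $c=Sf\in v\mathcal{L}_S$, some $d\in v\mathcal{R}_S$ with $Sf\in M\Gamma_S(d)=M\rho^{d}$; since $f\in E(R_f)$, the description $M\rho^a=\{Sg:g\in E(R_a)\}$ recalled above the theorem shows that $d=fS$ works.

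The principal expected difficulty is not any single conceptual step but the bookkeeping of naturality squares under the conjugation by $\eta$, together with the consistent reading of an element $u\in fSe$ either as a left translation of right ideals or as a right translation of left ideals. The dual cross-connection $\Delta_S\colon\mathcal{L}_S\to N^*\mathcal{R}_S$ is built by interchanging the roles of left and right throughout: replace $\rho^e$ by $\lambda^e$, $\rho(e,u,f)$ by $\lambda(f,u,e)$, and $\eta_{\rho^e}$ by $\eta_{\lambda^e}$, and verify the same clauses by the symmetric argument. The asserted compatibility $(Sf,eS)\in E_{\Gamma_S}\iff(eS,Sf)\in E_{\Delta_S}$ then follows because both sides reduce to the same biordered-set condition, namely the existence of an idempotent $g$ with $e\,\mathscr{R}\,g\,\mathscr{L}\,f$, so that $Sf=Sg\in M\rho^e$ on one side and $eS=gS\in M\lambda^f$ on the other.
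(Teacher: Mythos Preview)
The paper does not supply its own proof of this theorem; it is quoted from \cite[Theorem~IV.17]{cross} and used as input. The closest the paper comes to a proof is later, in Section~\ref{cxn}, where the analogous statement for $\Gamma_G$ is established by appealing to \cite[Proposition~IV.1 and Theorem~IV.2]{cross}: one factors $\Gamma_S$ as $\bar F\bar G$, where $\bar G\colon\mathcal{R}_{T\mathcal{L}_S}\to N^*\mathcal{L}_S$ is the normal-category isomorphism of Theorem~\ref{nd} and $\bar F\colon\mathcal{R}_S\to\mathcal{R}_{T\mathcal{L}_S}$ sends $eS\mapsto\rho^e(T\mathcal{L}_S)$ and $\lambda(e,u,f)\mapsto\lambda(\rho^e,\rho^u,\rho^f)$. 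Local-isomorphy of $\Gamma_S$ is then reduced to local-isomorphy of $\bar F$, which is a statement purely about principal right ideals of the concrete regular semigroup $T\mathcal{L}_S$.

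Your route is genuinely different: you bypass $T\mathcal{L}_S$ entirely and work directly with Yoneda, using the conjugation by $\eta_{\rho^e}$ to transport the question to hom-functors. This is more elementary and makes the role of the dualising correspondence $\lambda(e,u,f)\leftrightarrow\rho(f,u,e)$ transparent. What Nambooripad's factorisation buys is that the delicate point you gloss over---why $v\Gamma_S$ is injective on each principal ideal $\langle eS\rangle$ even though it need not be $v$-injective globally---becomes the concrete semigroup statement that $fS\mapsto\rho^f(T\mathcal{L}_S)$ is injective on $\omega^r(e)$, which follows from the structure of right ideals in $T\mathcal{L}_S$. In your sketch this step (``observe that $\Gamma_S$ sends $\langle eS\rangle$ bijectively onto $\langle H(\rho^e;-)\rangle$'') is asserted rather than argued; to close it in your framework you would need to show directly that $H(\rho^f;-)=H(\rho^g;-)$ with $f,g\in\omega^r(e)$ forces $fS=gS$, which requires unwinding when two idempotent principal cones are $\mathscr{R}$-related in $T\mathcal{L}_S$. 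Apart from this, your outline is correct.
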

	
	This gives rise to the cross-connection semigroup
	$$\mathbb{S}\Gamma_S= (\mathcal{R}_S,\mathcal{L}_S;\Gamma_S)=\:\{\: (\rho^a,\lambda^a) : a\in S \} .$$
	
	Then the set of idempotents $E_{\Gamma_S}$ of the semigroup $\mathbb{S}\Gamma_S$ is given by the set:
	\begin{equation}\label{eqnegs}
	E_{\Gamma_S}=\{(Se,eS) : e\in E(S)\}.
	\end{equation}
	
	Observe that the element $(Se,eS)$ denotes the following pair of normal cones $(\gamma(Se,eS),\delta(Se,eS))= (\rho^e,\lambda^e) \in \mathbb{S}{\Gamma_S}$. Further if we define the partial orders $\omega^l$ and $\omega^r$ as follows:
	\begin{equation}\label{eqpo}
	(Se,eS)\omega^l(Sf,fS) \iff Se\subseteq Sf,\text{ and }(Se,eS)\omega^r(Sf,fS) \iff eS\subseteq fS,
	\end{equation}
	then $E_{\Gamma_S}$ forms a regular biordered set and it is biorder isomorphic to the biordered set $E$ of the semigroup $S$. If $(Se,eS),(Sf,fS)\in E_{\Gamma_S}$, then the transpose of a morphism $\rho(e,u,f)\in \mathcal{L}_S(Se,Sf) $ is the morphism $\lambda(f,u,e)\in \mathcal{R}_S(fS,eS)$. 
	
	Thus we obtain a cross-connection from a regular semigroup $S$. 
	
	\subsubsection{Regular semigroup of a cross-connection} Conversely, given a cross-connection $(\mathcal{D},\mathcal{C};{\Gamma})$ with the dual $\Delta$, by category isomorphisms \cite{mac}, we have two associated bifunctors $\Gamma(-,-)\colon  \mathcal{C}\times\mathcal{D} \to \bf{Set}$ and $\Delta(-,-)\colon  \mathcal{C}\times\mathcal{D} \to \bf{Set}$. There is a natural isomorphism $\chi_\Gamma$ between the bifunctors and $\chi_\Gamma$ is called the \emph{duality} associated with the cross-connection. Using the duality $\chi_\Gamma$, we can get a \emph{linking} of some normal cones in $T\mathcal{C}$ with those in $T\mathcal{D}$. The pairs of linked cones $(\gamma,\delta)$ will form a regular semigroup which is called the \emph{cross-connection semigroup} $\mathbb{S}\Gamma$ determined by $\Gamma$.
	$$ \mathbb{S}\Gamma = \:\{\: (\gamma,\delta) \in T\mathcal{C}\times T\mathcal{D} : (\gamma,\delta) \text{ is linked }\:\} $$ 
	For $(\gamma,\delta),( \gamma' , \delta') \in \mathbb{S}\Gamma$, the binary composition is defined by 
	$$ (\gamma , \delta) \: ( \gamma' , \delta') = (\gamma . \gamma' , \delta' . \delta)    .$$
	
	Using this correspondence between cross-connections and regular semigroups, Nambooripad \cite{cross} explicitly proved the equivalence of the category $\bf{Cr}$ of cross-connections and the category $\bf{RS}$ of regular semigroups.

	\section{Inductive groupoids from cross-connections}\label{ind}
	
	Recall that the overall aim of the article is to prove the equivalence between the inductive groupoid and the cross-connection of a regular semigroup. This shall be implemented by constructing the inductive groupoid directly from the cross-connection of a regular semigroup (in this section) and conversely building the cross-connection associated with the inductive groupoid of a regular semigroup (in the next section). 
	
	First, given the cross-connection $ \Gamma_S= (\mathcal{R}_S,\mathcal{L}_S;\Gamma_S)$ of a regular semigroup $S$ with biordered set $E$ and inductive groupoid $(\mathcal{G}(S),\epsilon_S)$, we proceed to construct an inductive groupoid $\mathcal{G}({\Gamma_S})$ associated with the cross-connection $\Gamma_S$. 
	
	For this end, we identify the regular biordered set associated with the cross-connection $\Gamma_S$, define the category $\mathcal{G}({\Gamma_S})$ as a suitable subcategory of the category $\mathcal{L}_S\times\mathcal{R}_S$ with identities as the regular biordered set and show that it forms a groupoid. Then we introduce a partial order $\leq_\Gamma$ in $\mathcal{G}({\Gamma_S})$ and show that $(\mathcal{G}({\Gamma_S}),\leq_\Gamma)$ is an ordered groupoid. After that, we shall define an evaluation functor $\epsilon_\Gamma$ from the groupoid $\mathcal{G}(E_{\Gamma_S})$ of $E$-chains to the ordered groupoid $\mathcal{G}({\Gamma_S})$ and prove that $\mathcal{G}({\Gamma_S})$ is inductive with respect to $\epsilon_\Gamma$. Finally we shall show that the inductive groupoid $(\mathcal{G}({\Gamma_S}),\epsilon_\Gamma)$ is inductive isomorphic to the inductive groupoid $(\mathcal{G}(S),\epsilon_S)$ of the semigroup $S$. 
	
	From the discussion in Section \ref{cxnrs}, it is clear that the regular biordered set associated with the cross-connection $\Gamma_S$ is the set $E_{\Gamma_S}$ as defined in (\ref{eqnegs}). So, the object set of our required groupoid $\mathcal{G}({\Gamma_S})$ is: 
	$$v\mathcal{G}({\Gamma_S})= v\mathcal{G}(E_{\Gamma_S}) = E_{\Gamma_S}=\{(Se,eS) : e\in E(S)\}.$$ 
	
	Given a morphism $(x,x') $ from $xx'=e$ to $x'x=f$ in $\mathcal{G}(S) $ such that $ x\in S$ and $ x'\in V(x)$; since $x\in R_e\cap L_f$, the mapping $\rho_x=\rho(e,x,f)$ is a morphism (in fact, an isomorphism) in  $\mathcal{L}_S$ from $Se$ to $Sf$. Similarly as $x'\in R_f\cap L_e$, the mapping $\lambda_{x'}=\lambda(e,x',f)$ is an isomorphism in the category $\mathcal{R}_S$ from $eS$ to $fS$. Thus, given two objects $(Se,eS), (Sf,fS) \in v\mathcal{G}({\Gamma_S})$, we define a morphism in $\mathcal{G}({\Gamma_S})$ from $(Se,eS)$ to $(Sf,fS)$ as a pair of isomorphisms $(\rho_x,\lambda_{x'})$ where $ x\in R_e\cap L_f$ and $ x'\in V(x)$ such that $x'\in R_f\cap L_e$. 
	
	Suppose $(\rho_x,\lambda_{x'})$ is a morphism from $(Se,eS)$ to $(Sf,fS)$, and $(\rho_y,\lambda_{y'})$ is a morphism from $(Sf,fS)$ to $(Sg,gS)$  in $\mathcal{G}({\Gamma_S})$. Then using the compositions in $\mathcal{L}_S$ and $\mathcal{R}_S$, we define the composition
	$$(\rho_x,\lambda_{x'})\ast (\rho_y,\lambda_{y'}) =(\rho_{xy},\lambda_{y'x'})$$ 
	so that $(\rho_{xy},\lambda_{y'x'})$ is a morphism from $(Se,eS)$ to $(Sg,gS)$.
	
	\begin{lem}
		$\mathcal{G}({\Gamma_S})$ is a groupoid.
	\end{lem}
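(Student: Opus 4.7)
The strategy is to reduce the claim to the already-established groupoid structure of $\mathcal{G}(S)$ recalled in Section~\ref{sssecindrs}. The key observation is that the data defining a morphism $(\rho_x,\lambda_{x'})\colon(Se,eS)\to(Sf,fS)$ in $\mathcal{G}(\Gamma_S)$---namely $x\in R_e\cap L_f$ together with $x'\in V(x)\cap R_f\cap L_e$---is exactly the data of a morphism $(x,x')\colon e\to f$ in $\mathcal{G}(S)$, since $x\in R_e\cap L_f$ and $x'\in R_f\cap L_e$ are equivalent to $xx'=e$ and $x'x=f$ via the standard characterisation of Green's relations. Hence it suffices to verify that the composition $\ast$, identities, and inverses in $\mathcal{G}(\Gamma_S)$ transport correctly under this correspondence.

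First, I would verify that $\ast$ is well-defined: if $(\rho_x,\lambda_{x'})$ and $(\rho_y,\lambda_{y'})$ are composable then $x'x=f=yy'$, and a direct calculation gives $(xy)(y'x')=x(yy')x'=xfx'=xx'xx'=e$, and dually $(y'x')(xy)=g$; moreover $(xy)(y'x')(xy) = e\,(xy) = xy$ and $(y'x')(xy)(y'x')=g\,(y'x')=y'x'$, so $y'x'\in V(xy)$. Thus $xy\in R_e\cap L_g$ and $y'x'\in V(xy)\cap R_g\cap L_e$, and $(\rho_{xy},\lambda_{y'x'})$ is a legitimate morphism $(Se,eS)\to(Sg,gS)$. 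Associativity of $\ast$ then follows from associativity in $S$, the identity at $(Se,eS)$ is $(\rho_e,\lambda_e)=(1_{Se},1_{eS})$ corresponding to $e\in V(e)$, and the inverse of $(\rho_x,\lambda_{x'})$ is $(\rho_{x'},\lambda_x)$: one side of the composition yields $(\rho_{xx'},\lambda_{xx'})=(\rho_e,\lambda_e)$ and the other yields $(\rho_{x'x},\lambda_{x'x})=(\rho_f,\lambda_f)$, as required.

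The only potential subtlety is that morphisms in $\mathcal{L}_S$ and $\mathcal{R}_S$ are defined up to the equivalence recalled in Section~\ref{cxns}, which involves choices of $\mathscr{L}$- and $\mathscr{R}$-class representatives. However, because objects of $\mathcal{G}(\Gamma_S)$ are indexed by the idempotents of $E$ via $(Se,eS)$, the representatives of the relevant $\mathscr{L}$- and $\mathscr{R}$-classes are fixed canonically, so all the identities above hold on the nose rather than merely up to equivalence. Consequently the four groupoid axioms reduce to elementary manipulations with the defining equations $xx'=e$, $x'x=f$, and no genuine obstacle arises; the work is essentially bookkeeping that faithfully mirrors the groupoid structure of $\mathcal{G}(S)$.
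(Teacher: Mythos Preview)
Your proof is correct and follows essentially the same route as the paper: verify associativity, identities, and inverses by direct computation with the formulas $(\rho_x,\lambda_{x'})\ast(\rho_y,\lambda_{y'})=(\rho_{xy},\lambda_{y'x'})$, $(\rho_e,\lambda_e)$ as identity, and $(\rho_{x'},\lambda_x)$ as inverse. Your additional explicit check that $\ast$ lands in the right hom-set (i.e., that $y'x'\in V(xy)$ with the correct Green-class conditions) fills in a step the paper simply asserts before the lemma, and your framing via the correspondence with $\mathcal{G}(S)$ anticipates the isomorphism proved later in the section, but the underlying argument is the same.
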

	\begin{proof}
		First, to see that $\mathcal{G}({\Gamma_S})$ is a category, we need to verify associativity and identity. Given  $(\rho_x,\lambda_{x'})$ is a morphism from $(Se,eS)$ to $(Sf,fS)$, $(\rho_y,\lambda_{y'})$ is a morphism from $(Sf,fS)$ to $(Sg,gS)$ and $(\rho_z,\lambda_{z'})$ is a morphism from $(Sg,gS)$ to $(Sh,hS)$ in $\mathcal{G}({\Gamma_S})$. Then using the associativity of the compositions in $\mathcal{L}_S$ and $\mathcal{R}_S$, we can easily see that 
		$$(\rho_x,\lambda_{x'})\ast((\rho_y,\lambda_{y'})\ast (\rho_z,\lambda_{z'}))=(\rho_{xyz},\lambda_{z'y'x'})= ((\rho_x,\lambda_{x'})\ast(\rho_y,\lambda_{y'}))\ast(\rho_z,\lambda_{z'}).$$
		
		Also, given an object $(Se,eS)\in v\mathcal{G}({\Gamma_S})$, we have the identity morphism $(\rho_e,\lambda_e)=(\rho(e,e,e),\lambda(e,e,e))$ such that for a morphism $(\rho_x,\lambda_{x'})$ from $(Se,eS)$ to $(Sf,fS)$, we have $(\rho_e,\lambda_e)\ast(\rho_x,\lambda_{x'}) = (\rho_{ex},\lambda_{x'e}) =(\rho_x,\lambda_{x'})$. Similarly for a morphism $(\rho_y,\lambda_{y'})$ from $(Sg,gS)$ to $(Se,eS)$, we have $(\rho_y,\lambda_{y'})\ast(\rho_e,\lambda_e) = (\rho_{ye},\lambda_{ey'}) =(\rho_y,\lambda_{y'})$.
		
		Further, given a morphism $(\rho_x,\lambda_{x'})$ in $\mathcal{G}({\Gamma_S})$ from  $(Se,eS)$ to $(Sf,fS)$, we can see that $(\rho_{x'},\lambda_{x})$ is a morphism from $(Sf,fS)$ to $(Se,eS)$ such that  
		$$(\rho_x,\lambda_{x'})\ast (\rho_{x'},\lambda_{x}) =(\rho_{xx'},\lambda_{xx'})= (\rho_e,\lambda_{e})$$ 
		and
		$$(\rho_{x'},\lambda_{x})\ast (\rho_{x},\lambda_{x'}) =(\rho_{x'x},\lambda_{x'x})= (\rho_f,\lambda_{f}).$$ 
		Hence $\mathcal{G}({\Gamma_S})$ is a groupoid.
	\end{proof} 
	
	Now given a morphism $(\rho_x,\lambda_{x'})$ from $(Se,eS)$ to $(Sf,fS)$ and a morphism $(\rho_y,\lambda_{y'})$ from $(Sg,gS)$ to $(Sh,hS)$, define a relation $\leq_\Gamma$ on $\mathcal{G}({\Gamma_S})$ as follows: 
	\begin{equation}\label{po}
	(\rho_x,\lambda_{x'}) \leq_\Gamma (\rho_y,\lambda_{y'}) \iff 
	\begin{cases}
	(Se,eS) \subseteq (Sg,gS),\\
	(Sf,fS) \subseteq (Sh,hS), \text{ and }\\ 
	(\rho_x,\lambda_{x'})= (\rho_{ey}, \lambda_{y'e}).
	\end{cases}
	\end{equation}
	First, observe that the pair of isomorphisms $(\rho_{ey}, \lambda_{y'e})=(\rho(e,{ey},f),\lambda(e,{y'e},f))$ is a typical morphism in $\mathcal{G}({\Gamma_S})$ such that $e=eyy'e$ and $f=y'ey$.  
	
	\begin{lem}
		The relation $\leq_\Gamma$ is a partial order on $\mathcal{G}({\Gamma_S})$.
	\end{lem}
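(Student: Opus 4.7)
The plan is to verify the three defining properties of a partial order -- reflexivity, antisymmetry, and transitivity -- directly from definition~(\ref{po}). Two small preliminaries do all the work, so I would fix these up front. First, the pair-inclusion $(Se,eS)\subseteq(Sg,gS)$ appearing in~(\ref{po}) should be read via~(\ref{eqpo}) as the conjunction $Se\subseteq Sg$ and $eS\subseteq gS$; this is exactly $e\,\omega\,g$ in the biordered set $E_{\Gamma_S}$, so it supplies $eg=ge=e$. Second, every morphism $(\rho_x,\lambda_{x'})\colon(Se,eS)\to(Sf,fS)$ in $\mathcal{G}(\Gamma_S)$ comes equipped with $x\in R_e\cap L_f$ and $x'\in R_f\cap L_e$, hence with the elementary Green-relation identities $ex=xf=x$ and $fx'=x'e=x'$.

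Reflexivity is then immediate: the two inclusion clauses hold tautologically and the equational clause reduces to $(\rho_x,\lambda_{x'})=(\rho_{ex},\lambda_{x'e})$, which is just a restatement of the identities above. For antisymmetry, assuming $(\rho_x,\lambda_{x'})\leq_\Gamma(\rho_y,\lambda_{y'})\leq_\Gamma(\rho_x,\lambda_{x'})$ with $(\rho_y,\lambda_{y'})\colon(Sg,gS)\to(Sh,hS)$, the two pairs of mutual inclusions force $Se=Sg$, $eS=gS$, $Sf=Sh$, $fS=hS$. Since $e,g$ are idempotents in a single $\mathscr{H}$-class, and an $\mathscr{H}$-class contains at most one idempotent, $e=g$, and similarly $f=h$. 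The equational clause then collapses $(\rho_{ey},\lambda_{y'e})$ to $(\rho_y,\lambda_{y'})$ using $ey=gy=y$ and $y'e=y'g=y'$.

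Transitivity is the only place where any algebraic manipulation is required, and it is still short. Given $(\rho_x,\lambda_{x'})\leq_\Gamma(\rho_y,\lambda_{y'})\leq_\Gamma(\rho_z,\lambda_{z'})$ with $(\rho_z,\lambda_{z'})\colon(Sk,kS)\to(Sl,lS)$, the inclusion chains $Se\subseteq Sg\subseteq Sk$ and its dual follow from transitivity of $\subseteq$. For the equational clause, $x=ey$ and $y=gz$ together with $eg=e$ (from the preliminary observation) give $x=egz=ez$; symmetrically, $x'=y'e=z'ge=z'e$. These are exactly the equations required for $(\rho_x,\lambda_{x'})\leq_\Gamma(\rho_z,\lambda_{z'})$. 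There is no serious obstacle; the only point requiring care is the translation of the pair-inclusion symbol $\subseteq$ into the $\omega$-relation in the biordered set via~(\ref{eqpo}), after which each of the three clauses reduces to a brief substitution.
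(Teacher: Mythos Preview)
Your proof is correct and follows essentially the same direct verification of reflexivity, antisymmetry, and transitivity as the paper. The one mild difference is in the antisymmetry step: the paper substitutes the two equational clauses $(\rho_x,\lambda_{x'})=(\rho_{ey},\lambda_{y'e})$ and $(\rho_y,\lambda_{y'})=(\rho_{gx},\lambda_{x'g})$ into one another and simplifies at the level of morphism composition, whereas you invoke the fact that an $\mathscr{H}$-class contains at most one idempotent to conclude $e=g$ and $f=h$ outright, after which only one of the equational clauses is needed. Your shortcut is slightly cleaner; otherwise the two arguments coincide, including the transitivity computation $x=ey=e(gz)=(eg)z=ez$, which the paper carries out in the equivalent form $\rho_{ey}=\rho_e\rho_y=\rho_e\rho_{gz}=\rho_{egz}=\rho_{ez}$.
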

	\begin{proof}
		Since $x=ex$ and $x'=x'e$, clearly the relation $\leq_\Gamma$ is reflexive. 
		
		If $(\rho_x,\lambda_{x'}) \leq_\Gamma (\rho_y,\lambda_{y'})$ and $(\rho_y,\lambda_{y'}) \leq_\Gamma (\rho_x,\lambda_{x'})$, then $(Se,eS) \subseteq (Sg,gS)$, $(Sf,fS) \subseteq (Sh,hS)$, $(Sg,gS) \subseteq (Se,eS)$, $(Sh,hS) \subseteq (Sf,fS)$. So $(Se,eS) = (Sg,gS)$ and $(Sf,fS) = (Sh,hS)$. Also since $(\rho_x,\lambda_{x'})= (\rho_{ey}, \lambda_{y'e})$ and $(\rho_y,\lambda_{y'})= (\rho_{gx}, \lambda_{x'g})$,
		\begin{equation*}
		\begin{split}
		(\rho_x,\lambda_{x'})&=(\rho_{ey},\lambda_{y'e})\\
		&=(\rho_e\rho_y,\lambda_e\lambda_{y'})\\
		&=(\rho_e\rho_{gx},\lambda_e\lambda_{x'g})\\
		&=(\rho_{egx},\lambda_{x'ge})\\
		&=(\rho_{gx},\lambda_{x'g})\\
		&=(\rho_y,\lambda_{y'}).
		\end{split}
		\end{equation*}
		Hence $\leq_\Gamma$ is anti-symmetric.
		
		Now let $(\rho_x,\lambda_{x'}) \leq_\Gamma (\rho_y,\lambda_{y'})$ and $(\rho_y,\lambda_{y'}) \leq_\Gamma (\rho_z,\lambda_{z'})$ where $(\rho_z,\lambda_{z'})$ is a morphism from $(Sk,kS)$ to $(Sl,lS)$. Then $(Se,eS) \subseteq (Sg,gS)$, $(Sf,fS) \subseteq (Sh,hS)$, $(Sg,gS) \subseteq (Sk,kS)$, $(Sh,hS) \subseteq (Sl,lS)$. So $(Se,eS) \subseteq (Sk,kS)$, $(Sf,fS) \subseteq (Sl,lS)$. Also since $(\rho_x,\lambda_{x'})= (\rho_{ey}, \lambda_{y'e})$ and $(\rho_y,\lambda_{y'})= (\rho_{gz}, \lambda_{z'g})$,
		\begin{equation*}
		\begin{split}
		(\rho_x,\lambda_{x'})&=(\rho_{ey},\lambda_{y'e})\\
		&=(\rho_e\rho_y,\lambda_e\lambda_{y'})\\
		&=(\rho_e\rho_{gz},\lambda_e\lambda_{z'g})\\
		&=(\rho_{egz},\lambda_{z'ge})\\
		&=(\rho_{ez},\lambda_{z'e}).
		\end{split}
		\end{equation*}
		So $(\rho_x,\lambda_{x'}) \leq_\Gamma (\rho_z,\lambda_{z'})$, and thus $\leq_\Gamma$ is transitive.
		
		Hence the relation $\leq_\Gamma$ is a partial order.
	\end{proof}
	
	The above partial order may be discussed further in the context of \cite{kssnpo,hart,mitsch}. 
	Observe that $\leq_\Gamma$ on the identities of $\mathcal{G}({\Gamma_S})$ (or the biordered set $E_{\Gamma_S}$) reduces to the natural partial order $\omega$ on $E_{\Gamma_S}$, and may be written as follows. (Also see (\ref{eqpo}) above.) 
	$$(\rho_e,\lambda_{e}) \leq_\Gamma (\rho_f,\lambda_{f}) \iff (Se,eS)\subseteq(Sf,fS).$$
	
	Now define restriction and corestriction as follows. Given a morphism from $(\rho_x,\lambda_{x'})$ in $\mathcal{G}({\Gamma_S})$ from $(Se,eS)$ to $(Sf,fS)$, and if $(Sg,gS)\subseteq(Se,eS)$, then $\rho(g,g,e)$ is an inclusion in $\mathcal{L}_S$ from $Sg$ to $Se$, and $\lambda(g,g,e)$ is an inclusion in $\mathcal{R}_S$ from $gS$ to $eS$. So,
	$$(\rho(g,g,e)\rho(e,x,f),\lambda(g,g,e)\lambda(e,x',f))= (\rho(g,gx,f),\lambda(g,x'g,f))$$ 
	is a pair of monomorphisms in $\mathcal{L}_S \times \mathcal{R}_S$ from $(Sg,gS)$ to $(Sf,fS)$. Consider the pair of epimorphic components  $(\rho(g,gx,h),\lambda(g,x'g,h))$ where the idempotent $h= x'gx \in E(L_{gx})\cap E(R_{x'g})$. 
	
	So, the \emph{restriction} of $(\rho_x,\lambda_{x'})$ to $(Sg,gS)$ in $\mathcal{G}({\Gamma_S})$ is defined as the pair of isomorphisms $(\rho(g,gx,h),\lambda(g,x'g,h))= (\rho_{gx},\lambda_{x'g})$.
	
	Similarly, given a morphism from $(\rho_x,\lambda_{x'})$ in $\mathcal{G}({\Gamma_S})$ from $(Se,eS)$ to $(Sf,fS)$, and if $(Sh,hS)\subseteq(Sf,fS)$, then we have a pair of retractions $(\rho(f,h,h),\lambda(f,h,h))$ in $\mathcal{L}_S\times\mathcal{R}_S$ from $(Sf,fS)$ to $(Sh,hS)$. Then $$(\rho(e,x,f)\rho(f,h,h),\lambda(e,x',f)\lambda(f,h,h))= (\rho(e,xh,h),\lambda(e,hx',h)) $$ 
	is a pair of epimorphisms in $\mathcal{L}_S\times\mathcal{R}_S$. Consider their normal factorisations into retractions and isomorphisms as follows, 
	$$(\rho(e,xh,h),\lambda(e,hx',h)) = (\rho(e,g,g)\rho(g,xh,h),\lambda(e,g,g)\lambda(g,hx',h))$$ 
	where $g= xhx'$. So, we define the \emph{corestriction} of  $(\rho_x,\lambda_{x'})$ to $(Sh,hS)$ as the pair of isomorphisms $((\rho(g,xh,h),\lambda(g,hx',h))=(\rho_{xh},\lambda_{hx'})$.
	
	\begin{pro}
		$(\mathcal{G}({\Gamma_S}),\leq_\Gamma)$ is an ordered groupoid with restrictions and corestrictions defined as above.
	\end{pro}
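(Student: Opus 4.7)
My plan is to verify the four axioms (OG1), (OG2), (OG3), (OG3$^*$) of Definition~\ref{og} for $(\mathcal{G}({\Gamma_S}),\leq_\Gamma)$ with the given restriction and corestriction operations. Throughout, I would exploit the fact that a morphism $(\rho_x,\lambda_{x'})$ in $\mathcal{G}({\Gamma_S})$ is determined by a pair $(x,x')$ where $x\in R_e\cap L_f$ and $x'\in V(x)\cap L_e\cap R_f$, so the computations reduce to manipulating products of semigroup elements (and their sandwich idempotents) rather than abstract category-theoretic diagrams. This lets me transfer standard semigroup identities into the two parallel calculations in $\mathcal{L}_S$ and $\mathcal{R}_S$.

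For (OG2) I would simply observe that if $(\rho_x,\lambda_{x'})\leq_\Gamma(\rho_y,\lambda_{y'})$ with the idempotents $e,f,g,h$ as in \eqref{po}, then the inverse pair is $(\rho_{x'},\lambda_x)$ going from $(Sf,fS)$ to $(Se,eS)$ and the inverse of $(\rho_y,\lambda_{y'})$ is $(\rho_{y'},\lambda_y)$ going from $(Sh,hS)$ to $(Sg,gS)$; the required equality $x'=fy'$ and $x=yf$ falls out of the identities $x=ey$, $x'=y'e$ together with $e=eyy'e$, $f=y'ey$. For (OG1), given compatible inequalities $(\rho_x,\lambda_{x'})\leq_\Gamma(\rho_y,\lambda_{y'})$ and $(\rho_u,\lambda_{u'})\leq_\Gamma(\rho_v,\lambda_{v'})$, I would unwind the definition of composition to show $(\rho_{xu},\lambda_{u'x'})\leq_\Gamma(\rho_{yv},\lambda_{v'y'})$; the nesting of domains and codomains is immediate from the transitivity of $\subseteq$, and the equality $xu=e(yv)$ follows by substituting $x=ey$ and $u=fv$ (where $f$ is the common codomain identity of $x$ and domain of $u$) and using $ey\cdot fv = eyv$ since $y'ey=f$ gives $yf=y$ absorbed into the product.

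For (OG3) and (OG3$^*$), the definitions of restriction $e{\downharpoonleft}(\rho_x,\lambda_{x'}) = (\rho_{gx},\lambda_{x'g})$ and corestriction $(\rho_x,\lambda_{x'}){\downharpoonright} h = (\rho_{xh},\lambda_{hx'})$ given in the preceding paragraphs were constructed precisely as the epimorphic components of $\rho(g,g,e)\rho(e,x,f)$ and $\rho(e,x,f)\rho(f,h,h)$ (and their duals in $\mathcal{R}_S$) via the normal factorisation in $\mathcal{L}_S$ and $\mathcal{R}_S$; so $\rho_{gx}$ and $\lambda_{x'g}$ are indeed isomorphisms with the correct (co)domains, and the restriction is $\leq_\Gamma$-below $(\rho_x,\lambda_{x'})$ by definition of $\leq_\Gamma$. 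Uniqueness follows because any other morphism below $(\rho_x,\lambda_{x'})$ with domain $(Sg,gS)$ must, by \eqref{po}, equal $(\rho_{gx},\lambda_{x'g})$.

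The step I expect to require the most care is (OG1): one must keep track of the four sandwich-type idempotents involved and verify that the equality of morphism components survives composition, in particular that the idempotent arising on the right side of the composed inequality is the domain identity of the composition. This is a bookkeeping exercise with products of the form $eyfv$, $v'fy'e$, etc., but it is routine once one notes that the inclusions in $\mathcal{L}_S$ and $\mathcal{R}_S$ correspond to left/right multiplication by suitable idempotents. The remaining three axioms are then straightforward translations of the normal factorisation properties already used to define $e{\downharpoonleft}(\rho_x,\lambda_{x'})$ and $(\rho_x,\lambda_{x'}){\downharpoonright} h$.
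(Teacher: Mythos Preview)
Your approach is correct and essentially the same as the paper's: direct verification of (OG1)--(OG3$^*$) by translating everything into semigroup identities in $S$, with (OG1) handled by absorbing the intermediate idempotent into the composed product and (OG3)/(OG3$^*$) read off from the very definitions of restriction and corestriction together with~\eqref{po}. One small correction to your (OG1) sketch: the claim ``$yf=y$'' is not valid in general (you have $y\in L_h$, not $L_f$); the identity you actually need is $(ey)f=ey$, which holds because $ey=x\in L_f$, and this is exactly what gives $ey\cdot fv=eyv$.
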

	\begin{proof}
		We need to verify that $(\mathcal{G}({\Gamma_S}),\leq_\Gamma)$ satisfies the axioms of Definition \ref{og}. First, let $(\rho_x,\lambda_{x'})$ be a morphism from $(Se,eS)$ to $(Sf,fS)$ and $(\rho_y,\lambda_{y'})$ a morphism from $(Sf,fS)$ to $(Sg,gS)$ in $\mathcal{G}({\Gamma_S})$ so that $(\rho_{xy},\lambda_{y'x'})$ is a morphism from $(Se,eS)$ to $(Sg,gS)$. Also let $(\rho_u,\lambda_{u'})$ be a morphism in $\mathcal{G}({\Gamma_S})$ from $(Sh,hS)$ to $(Sk,kS)$ and $(\rho_v,\lambda_{v'})$ a morphism from $(Sk,kS)$ to $(Sl,lS)$ so that $(\rho_{uv},\lambda_{v'u'})$ is a morphism from $(Sh,hS)$ to $(Sl,lS)$. Suppose $(\rho_u,\lambda_{u'})\leq_\Gamma(\rho_x,\lambda_{x'})$ and $(\rho_v,\lambda_{v'})\leq_\Gamma(\rho_y,\lambda_{y'})$. Then clearly, $(Sh,hS) \subseteq (Se,eS)$ and $(Sl,lS) \subseteq (Sg,gS)$. Also since $(\rho_u,\lambda_{u'})= (\rho_{hx}, \lambda_{x'h})$ and  $(\rho_v,\lambda_{v'})= (\rho_{ky}, \lambda_{y'k})$, 
		\begin{equation*}
		\begin{split}
		(\rho_{uv},\lambda_{v'u'})&=(\rho_{u},\lambda_{u'})\ast(\rho_{v},\lambda_{v'})\\
		&=(\rho_{hx}, \lambda_{x'h}) \ast(\rho_{ky}, \lambda_{y'k})\\
		&=(\rho_{hxky},\lambda_{y'kx'h})\\
		&=(\rho_{hxy},\lambda_{y'x'h}).
		\end{split}
		\end{equation*}
		So $(\rho_{uv},\lambda_{v'u'})\leq_\Gamma(\rho_{xy},\lambda_{y'x'})$ and (OG1) is satisfied. 
		
		Second, let $(\rho_x,\lambda_{x'})$ be a morphism from $(Se,eS)$ to $(Sf,fS)$ and $(\rho_y,\lambda_{y'})$ a morphism from $(Sg,gS)$ to $(Sh,hS)$ in $\mathcal{G}({\Gamma_S})$. Then if $(\rho_x,\lambda_{x'}) \leq_\Gamma (\rho_y,\lambda_{y'})$, 
		then $(Se,eS) \subseteq (Sg,gS)$ and $(Sf,fS) \subseteq (Sh,hS)$. Also since $\rho_x=\rho_{ey}$, i.e., $\rho(e,x,f)=\rho(e,ey,f)$, from cross-connection theory (see Subsection \ref{cxnrs}), we have $x=ey=xx'y$. Similarly, $\lambda_{x'}=\lambda_{y'e}$ implies $ x'=y'e=y'xx'$. Then, 
		$$x= (xx')y=(yy'xx')y =yy'(xx'y) =yy'x=yy'(xx'x) = y(y'xx')x= yx'x=yf.$$ 
		This implies $\lambda(f,x,e)=\lambda(f,yf,e)$, i.e., $\lambda_x=\lambda_{yf}$. Similarly, $\lambda_{x'}=\lambda_{y'e}$ gives $\rho_{x'}=\rho_{fy'}$. Hence,
		$$(\rho_{x'},\lambda_{x})= (\rho_{fy'}, \lambda_{yf}).$$ 
		So $(\rho_{x'},\lambda_{x}) \leq_\Gamma (\rho_{y'},\lambda_{y})$. Hence (OG2) is satisfied.
		
		Lastly, for $(Sg,gS)\leq(Se,eS) $, if we define the {restriction} of $(\rho_x,\lambda_{x'})$ to $(Sg,gS)$ as the morphism $(\rho_{gx},\lambda_{x'g})$, then clearly (OG3) also holds. The dual condition (OG3$^*$) also holds for corestriction.
		
		Hence $(\mathcal{G}({\Gamma_S}),\leq_\Gamma)$ is an ordered groupoid.
	\end{proof}
	Now, since $E_{\Gamma_S}$ is a biordered set, $\mathcal{G}(E_{\Gamma_S})$ forms an ordered groupoid with respect to the order induced by the E-chains of $E_{\Gamma_S}$. In the sequel, for ease of notation we shall denote the vertex $(Se,eS)$ of the groupoid $\mathcal{G}(E_{\Gamma_S})$ by just $\mathfrak{e}$. Hence an E-chain $ ((Se_0,e_0S),(Se_1,e_1S),\dotsc,(Se_n,e_nS))$ shall be denoted by $ (\mathfrak{e}_0,\mathfrak{e}_1,\dotsc,\mathfrak{e}_n)$.
	
	We define a functor $\epsilon_\Gamma\colon\mathcal{G}(E_{\Gamma_S}) \to \mathcal{G}({\Gamma_S})$ as follows. We let 
	$v\epsilon_\Gamma =1_{E_{\Gamma_S}}$ and for an arbitrary E-chain $\mathfrak{c}= (\mathfrak{e}_0,\mathfrak{e}_1,\dotsc,\mathfrak{e}_n)$  in $\mathcal{G}(E_{\Gamma_S})$, 
	$$\epsilon_\Gamma(\mathfrak{c}) = (\rho_{w},\lambda_{w'})$$
	where $w=e_0e_1\dots e_{n-1}e_n$ and $w'=e_ne_{n-1}\dots e_1e_0$.
	
	First, $w\in e_0Se_n$ and $w'\in e_nSe_0$, so $(\rho(e_0,w,e_n),\lambda(e_0,w',e_n))\in \mathcal{L}_S\times\mathcal{R}_S$.
	
	Observe that either $e_{i-1}\mathscr{R}e_i$ so that $e_{i-1}e_i=e_i$, or $e_{i-1}\mathscr{L}e_i$ so that $e_{i-1}e_i=e_{i-1}$. So, $ww'=e_0e_1\dots e_{n-1}e_ne_ne_{n-1}\dots e_1e_0=e_0$ and similarly $w'w=e_n$. Since $ww'w=w$ and $w'ww'=w'$, we have $w'\in V(w)$. Hence $  (\rho_{w},\lambda_{w'})$ is a morphism in the groupoid $\mathcal{G}({\Gamma_S})$.
	
	\begin{lem}
		The functor	$\epsilon_\Gamma \colon \mathcal{G}(E_{\Gamma_S}) \to \mathcal{G}({\Gamma_S})$ is a $v$-isomorphism.
	\end{lem}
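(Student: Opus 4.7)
The plan is to verify two essentially separate points: first, that $\epsilon_\Gamma$ is a well-defined functor between the groupoids $\mathcal{G}(E_{\Gamma_S})$ and $\mathcal{G}({\Gamma_S})$; and second, that the object map $v\epsilon_\Gamma = 1_{E_{\Gamma_S}}$ is an order isomorphism between the partially ordered sets of objects of these two ordered groupoids. The preceding discussion already checks that each E-chain is sent to a legitimate morphism of $\mathcal{G}({\Gamma_S})$, so what remains is invariance under the E-chain equivalence, functoriality, and the order-isomorphism property.

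For well-definedness on E-chains, I would show that the element $w = e_0 e_1 \cdots e_n$ (and correspondingly its reverse $w'$) is invariant under insertion or deletion of inessential vertices. Suppose $e_{i-1}\,\mathscr{R}\,e_i\,\mathscr{R}\,e_{i+1}$, so that $e_i$ is inessential. Then $e_{i-1}e_i = e_i$ and $e_i e_{i+1} = e_{i+1}$, whence $e_{i-1}e_i e_{i+1} = e_{i+1} = e_{i-1}e_{i+1}$; the analogous equality holds for the reversed product. The dual argument handles the case when $e_i$ lies between two $\mathscr{L}$-related idempotents. Hence $w$ and $w'$ depend only on the E-chain, not on the chosen E-path representative. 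Functoriality is then immediate: a singleton E-chain $(\mathfrak{e})$ maps to $(\rho_e,\lambda_e)$, which is the identity at $(Se,eS)$; and for composable E-chains $\mathfrak{c}=(\mathfrak{e}_0,\dots,\mathfrak{e}_n)$ and $\mathfrak{c}'=(\mathfrak{e}_n,\dots,\mathfrak{e}_m)$, the products $e_0\cdots e_n\cdot e_n\cdots e_m$ and $e_0\cdots e_m$ coincide (using $e_n e_n = e_n$ and the previous reduction argument), giving $\epsilon_\Gamma(\mathfrak{c}\cdot\mathfrak{c}') = \epsilon_\Gamma(\mathfrak{c})\ast\epsilon_\Gamma(\mathfrak{c}')$.

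For the order-isomorphism claim, the object map $v\epsilon_\Gamma$ is by definition the identity on $E_{\Gamma_S}$, so it is a bijection. The partial order on $v\mathcal{G}(E_{\Gamma_S})$ inherited from $\leq_E$ reduces on singleton E-chains to the natural partial order $\omega$ of the biordered set $E_{\Gamma_S}$, and as already observed in the text immediately after the definition of $\leq_\Gamma$, the order $\leq_\Gamma$ restricted to identities of $\mathcal{G}({\Gamma_S})$ also reduces to $\omega$ on $E_{\Gamma_S}$. Thus $1_{E_{\Gamma_S}}$ trivially preserves and reflects the order, and $\epsilon_\Gamma$ is a $v$-isomorphism.

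The only substantive obstacle is the invariance under inessential vertices, which rests on the basic identities $ef=e$ or $ef=f$ forced by $\mathscr{R}$- and $\mathscr{L}$-relatedness; everything else is bookkeeping once this reduction is in place.
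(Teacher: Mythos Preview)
Your argument is correct for the lemma as literally stated, and your explicit check that the product $e_0e_1\cdots e_n$ is invariant under insertion or deletion of inessential vertices is a point the paper glosses over. However, the paper's own proof takes a different tack: after verifying functoriality on compositions, it does \emph{not} argue directly that $v\epsilon_\Gamma=1_{E_{\Gamma_S}}$ is an order isomorphism (which, as you note, is immediate), but instead proves that $\epsilon_\Gamma$ commutes with restrictions, namely $\epsilon_\Gamma(\mathfrak{h}{\downharpoonleft}\mathfrak{c})=\mathfrak{h}{\downharpoonleft}\epsilon_\Gamma(\mathfrak{c})$ for $\mathfrak{h}\in\omega(\mathbf{d}(\mathfrak{c}))$, via an explicit computation with $h_i=e_ih_{i-1}e_i$.

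The difference matters downstream. For $\epsilon_\Gamma$ to serve as the evaluation functor of an inductive groupoid (Definition~\ref{dfnig}), it must be a functor of \emph{ordered} groupoids, i.e., preserve the partial order on morphisms, not merely on objects; restriction preservation is exactly what secures this. Your proof establishes the bare $v$-isomorphism claim cleanly, while the paper's proof is really aimed at the stronger order-preservation property that the next theorem relies on. If you want your write-up to feed into the inductive-groupoid verification, you should add the restriction-preservation computation; conversely, your well-definedness argument on E-chain representatives would fill a small gap in the paper's presentation.
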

	\begin{proof}
		First we need to verify that $\epsilon_\Gamma $ is a well-defined functor. Let $\mathfrak{c},\mathfrak{d} \in \mathcal{G}(E_{\Gamma_S})$ where $\mathfrak{c}$ is an $E$-chain as defined in the foregoing discussion and  $\mathfrak{d}= (\mathfrak{f}_0,\mathfrak{f}_1,\dotsc,\mathfrak{f}_n)$ where $v=f_0f_1\dots f_{n-1}f_n$ and $v'=f_nf_{n-1}\dots f_1f_0$. Suppose $\mathfrak{c}.\mathfrak{d}$ exists so that $\epsilon_\Gamma(\mathfrak{c})$ and $\epsilon_\Gamma(\mathfrak{d})$ are composable. Then,
		\begin{equation*}
		\begin{split}
		\epsilon_\Gamma(\mathfrak{c})\ast\epsilon_\Gamma(\mathfrak{d}) &= (\rho_{w},\lambda_{w'})\ast(\rho_{v},\lambda_{v'})\\
		&=(\rho_w\rho_v,\lambda_{w'}\lambda_{v'})\\
		&=(\rho_{wv},\lambda_{v'w'})\\
		&=(\rho_{(wv)},\lambda_{(wv)'})\\
		&=\epsilon_\Gamma(\mathfrak{c}.\mathfrak{d}).
		\end{split}
		\end{equation*}
		
		Then for an $E$-chain $\mathfrak{c}= (\mathfrak{e}_0,\mathfrak{e}_1,\dotsc,\mathfrak{e}_n) \in \mathcal{G}(E_{\Gamma_S})$ and $\mathfrak{h}\in \omega(\mathfrak{e}_0)$, let
		$$\mathfrak{h}\cdot \mathfrak{c}=  (\mathfrak{h},\mathfrak{h}_1,\dotsc,\mathfrak{h}_n)=  ((Sh,hS),(Sh_1,h_1S),\dotsc,(Sh_n,h_nS))$$
		where $h_i=e_ih_{i-1}e_i$ for all $i=1,\dots, n$ and $h_0=h$.
		
		Observe that,
		$$hh_1\dots h_n=h(e_1e_0he_0e_1)\dots(e_ne_{n-1}\dots e_1e_0he_0e_1\dots e_{n-1}e_n)=he_0e_1\dots e_{n-1}e_n.$$
		Similarly, $h_nh_{n-1}\dots h_1h= e_ne_{n-1}\dots e_1e_0h.$
		
		So, $$\epsilon_\Gamma(\mathfrak{h}{\downharpoonleft} \mathfrak{c})= (\rho_{he_0e_1\dots e_{n-1}e_n},\lambda_{e_ne_{n-1}\dots e_1e_0h}).$$
		
		Also, 
		\begin{equation*}
		\begin{split}
		\mathfrak{h}{\downharpoonleft} \epsilon_\Gamma(\mathfrak{c})&= \mathfrak{h}{\downharpoonleft} (\rho_{e_0e_1\dots e_{n-1}e_n},\lambda_{e_ne_{n-1}\dots e_1e_0})\\
		&=(\rho_{he_0e_1\dots e_{n-1}e_n},\lambda_{e_ne_{n-1}\dots e_1e_0h}).
		\end{split}
		\end{equation*}
		Hence $\epsilon_\Gamma(\mathfrak{h}{\downharpoonleft} \mathfrak{c}) =\mathfrak{h}{\downharpoonleft} \epsilon_\Gamma(\mathfrak{c})$, and $\epsilon_\Gamma$  is a $v$-isomorphism.
	\end{proof}
	
	\begin{thm}
		$(\mathcal{G}({\Gamma_S}),\epsilon_\Gamma)$ is an inductive groupoid.
	\end{thm}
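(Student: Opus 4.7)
The plan is to verify the two axioms (IG1) and (IG2) of Definition \ref{dfnig} together with their duals, and the most economical route is to exploit the obvious bijection between the morphisms of $\mathcal{G}(\Gamma_S)$ and those of $\mathcal{G}(S)$. To the morphism $(\rho_x,\lambda_{x'})\colon(Se,eS)\to(Sf,fS)$ in $\mathcal{G}(\Gamma_S)$ one assigns the pair $(x,x')\in\mathcal{G}(S)$ with $xx'=e$ and $x'x=f$; this is a bijection on morphisms lifting the biorder isomorphism $E_{\Gamma_S}\to E(S)$ given by $(Se,eS)\mapsto e$. First I would verify that this correspondence is an isomorphism of ordered groupoids commuting with the evaluation functors, and then deduce both (IG1) and (IG2) by transport of structure from the known inductive groupoid $(\mathcal{G}(S),\epsilon_S)$.

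Concretely, composition is preserved by the definition of $\ast$; the partial order $\leq_\Gamma$ of (\ref{po}) translates verbatim to the order $\leq_\mathcal{G}$ on $\mathcal{G}(S)$, since $(Se,eS)\subseteq(Sg,gS)$ is equivalent to $e\omega g$ by (\ref{eqpo}); and the formulas for the restriction $(\rho_{gx},\lambda_{x'g})$ and corestriction $(\rho_{xh},\lambda_{hx'})$ derived above match the restriction $(gx,x'g)$ and corestriction $(xh,hx')$ of $\mathcal{G}(S)$ recalled in Section \ref{sssecindrs}. The evaluation functor is also compatible: $\epsilon_\Gamma(\mathfrak{c})=(\rho_w,\lambda_{w'})$ with $w=e_0e_1\cdots e_n$ and $w'=e_n\cdots e_0$ is precisely the image of $\epsilon_S(\mathfrak{c})=(w,w')$ under the correspondence. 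Hence $(\mathcal{G}(\Gamma_S),\epsilon_\Gamma)$ and $(\mathcal{G}(S),\epsilon_S)$ are isomorphic as ordered groupoids equipped with an evaluation functor from the groupoid of $E$-chains.

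Since axioms (IG1) and (IG2) involve only composition, restrictions, the partial order, and the evaluation functor, all of which the bijection respects, both axioms transfer directly from $(\mathcal{G}(S),\epsilon_S)$ to $(\mathcal{G}(\Gamma_S),\epsilon_\Gamma)$. The main obstacle is essentially bureaucratic: one must confirm that the biorder isomorphism $E_{\Gamma_S}\to E(S)$ sends singular $E$-squares to singular $E$-squares and respects the domain $D_{E_{\Gamma_S}}$ of the partial multiplication. Both facts follow immediately from (\ref{eqpo}) together with the observation that the notions of singularity and of $D_E$ are defined purely in terms of the two quasi-orders $\omega^l$ and $\omega^r$ and of products of comparable idempotents, all of which pass through the identification $(Se,eS)\leftrightarrow e$ without distortion.
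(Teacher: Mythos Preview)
Your argument is correct, but it takes a different route from the paper. The paper verifies (IG1) and (IG2) by direct computation in $\mathcal{G}(\Gamma_S)$: it writes out both sides of the (IG1) identity and simplifies each to $(\rho_{e_1e_2x},\lambda_{x'e_1})$, and similarly checks $\epsilon_\Gamma$-commutativity of a column-singular $E$-square by reducing both paths to $(\rho_{ge},\lambda_h)$. Only afterwards, in a separate theorem, does the paper define the functor $\Phi\colon\mathcal{G}(\Gamma_S)\to\mathcal{G}(S)$, $(\rho_x,\lambda_{x'})\mapsto(x,x')$, and prove it is an inductive isomorphism.

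You have essentially reversed this order: you establish the ordered-groupoid isomorphism compatible with the evaluation functors first, and then pull back (IG1) and (IG2) from the known inductive groupoid $(\mathcal{G}(S),\epsilon_S)$. This is more economical, since it avoids redoing computations that the paper itself acknowledges are ``an adaptation of the proof of $(\mathcal{G}(S),\epsilon_S)$ being an inductive groupoid''. The paper's approach, on the other hand, keeps the two theorems logically independent and makes the verification of the axioms self-contained within the cross-connection language. Your transport-of-structure argument is perfectly valid because, as you note, (IG1) and (IG2) are stated purely in terms of data (order, composition, restriction, evaluation) that your bijection preserves; you are careful not to assume inductiveness of $\mathcal{G}(\Gamma_S)$ in advance, working instead with the weaker notion of an ordered groupoid equipped with a $v$-isomorphism from $\mathcal{G}(E)$.
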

	\begin{proof}
		We need to verify that $(\mathcal{G}({\Gamma_S}),\epsilon_\Gamma)$ satisfies the axioms of Definition \ref{dfnig}. 
		
		First, we need to prove (IG1) of Definition \ref{dfnig}. That is, for an inductive groupoid $(G,\epsilon_G)$, let $x\in \mathcal{G}$ and for $i=1,2$, given $e_i$, $f_i \in E$ such that $\epsilon (e_i) \leq \mathbf{d}(x)$ and $\epsilon (f_i) = \mathbf{r}(\epsilon (e_i){\downharpoonleft} x)$. If $e_1\omega^r e_2$, then $f_1\omega^r f_2$, and
		$$\epsilon (e_1,e_1e_2)(\epsilon (e_1e_2){\downharpoonleft} x) = (\epsilon (e_1){\downharpoonleft} x)\epsilon (f_1,f_1f_2).$$
		
		Now, let $(\rho_x,\lambda_{x'})$ be a morphism in $\mathcal{G}({\Gamma_S})$ from $(Se,eS)$ to $(Sf,fS)$ such that for $i=1,2$, $(S{e_i},{e_i}S) \subseteq (Se,eS)$. Then 		$\mathbf{r}((\rho_{e_ix},\lambda_{x'e_i})) $ is $(S{f_i},{f_i}S)= (S{x'e_ix},{x'e_ix}S)$. 
		If $e_1S\subseteq e_2S$, then 
		$f_2f_1 = x'e_2xx'e_1x=x'e_2ee_1x=x'e_1x=f_1.$
		So $f_1S\subseteq f_2S$. Then using the notations of the previous lemma, we need to show that:
		$$\epsilon_\Gamma(\mathfrak{e}_1,\mathfrak{e}_1\mathfrak{e}_2)\ast\epsilon_\Gamma(\mathfrak{e}_1\mathfrak{e}_2){\downharpoonleft} (\rho_x,\lambda_{x'}) = \epsilon_\Gamma(\mathfrak{e}_1){\downharpoonleft}(\rho_x,\lambda_{x'})\ast\epsilon_\Gamma(\mathfrak{f}_1,\mathfrak{f}_1\mathfrak{f}_2).	$$
		For that end, observe that:
		\begin{equation*}
		\begin{split}
		\epsilon_\Gamma(\mathfrak{e}_1,\mathfrak{e}_1\mathfrak{e}_2)\ast\epsilon_\Gamma(\mathfrak{e}_1\mathfrak{e}_2){\downharpoonleft} (\rho_x,\lambda_{x'}) 
		&= (\rho_{e_1e_2},\lambda_{e_1e_2e_1})\ast (\rho_{e_1e_2x},\lambda_{x'e_1e_2})\\
		&= (\rho_{e_1e_2},\lambda_{e_1})\ast (\rho_{e_1e_2x},\lambda_{x'e_1e_2})\\
		&= (\rho_{e_1e_2x},\lambda_{x'e_1e_2e_1})\\
		&= (\rho_{e_1e_2x},\lambda_{x'e_1}).\\
		\end{split}
		\end{equation*}
		Also,
		\begin{equation*}
		\begin{split} \epsilon_\Gamma(\mathfrak{e}_1){\downharpoonleft}(\rho_x,\lambda_{x'})\ast\epsilon_\Gamma(\mathfrak{f}_1,\mathfrak{f}_1\mathfrak{f}_2)&= (\rho_{e_1x},\lambda_{x'e_1})\ast(\rho_{f_1f_2},\lambda_{f_1f_2f_1})\\
		&= (\rho_{e_1x},\lambda_{x'e_1})\ast(\rho_{f_1f_2},\lambda_{f_1})\\
		&= (\rho_{e_1x(x'e_1x)(x'e_2x)},\lambda_{x'e_1x)x'e_1})\\
		&= (\rho_{e_1(xx')e_1(xx')e_2x)},\lambda_{x'e_1(xx')e_1})\\
		&= (\rho_{e_1e_2x},\lambda_{x'e_1}).\\
		\end{split}
		\end{equation*}
		Thus we have verified (IG1) and similarly we can verify its dual.
		
		Now we have to verify (IG2) that every singular $E$-square in the ordered groupoid $\mathcal{G}(E_{\Gamma_S})$ is $\epsilon_\Gamma$-commutative. Let $\bigl[ \begin{smallmatrix} \mathfrak{g}&\mathfrak{g}\mathfrak{e}\\ \mathfrak{h}&\mathfrak{h}\mathfrak{e} \end{smallmatrix} \bigr]$
		be a column-singular E-square such that $g,h \in \omega^r(e)$ and $g\mathscr{L} h$. Then,
		\begin{equation*}
		\begin{split}
		\epsilon_\Gamma(\mathfrak{g},\mathfrak{h})\ast\epsilon_\Gamma(\mathfrak{h},\mathfrak{he})&=(\rho_{gh},\lambda_{hg})\ast(\rho_{he},\lambda_{heh})\\
		&=(\rho_{ghe},\lambda_{hehg})\\
		&=(\rho_{ghe},\lambda_{hg})\\
		&=(\rho_{ge},\lambda_{h}).
		\end{split}
		\end{equation*}
		Also,
		\begin{equation*}
		\begin{split}
		\epsilon_\Gamma(\mathfrak{g},\mathfrak{ge})\ast\epsilon_\Gamma(\mathfrak{ge},\mathfrak{he})&=(\rho_{ge},\lambda_{geg})\ast(\rho_{gehe},\lambda_{hege})\\
		&=(\rho_{ghe},\lambda_{hegeg})\\
		&=(\rho_{ge},\lambda_{hg})\\
		&=(\rho_{ge},\lambda_{h}).
		\end{split}
		\end{equation*}
		So a column-singular E-square is $\epsilon_\Gamma$-commutative. Dually, we can show that a row-singular E-square is also $\epsilon_\Gamma$-commutative. So (IG2) also holds.
		
		Hence $(\mathcal{G}({\Gamma_S}),\epsilon_\Gamma)$ is an inductive groupoid.
	\end{proof}
	Observe that the above proof is an adaptation of the proof of $(\mathcal{G}(S),\epsilon_S)$ being an inductive groupoid \cite[Theorem 3.8]{mem}.
	\begin{thm}
		$(\mathcal{G}({\Gamma_S}),\epsilon_\Gamma)$ is inductive isomorphic to $(\mathcal{G}(S),\epsilon_S)$.
	\end{thm}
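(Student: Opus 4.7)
The plan is to exhibit the obvious candidate functor $F\colon\mathcal{G}(S)\to\mathcal{G}(\Gamma_S)$ given by $vF(e) = (Se, eS)$ on objects and $F((x,x')) = (\rho_x, \lambda_{x'})$ on morphisms, and to verify that it is an inductive isomorphism. First I would check that $F$ is a well-defined functor: if $(x,x')\colon e\to f$ in $\mathcal{G}(S)$ with $e=xx'$, $f=x'x$, then $x\in R_e\cap L_f$ and $x'\in V(x)\cap R_f\cap L_e$, so $(\rho_x,\lambda_{x'})$ is a morphism from $(Se,eS)$ to $(Sf,fS)$ in $\mathcal{G}(\Gamma_S)$. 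Functoriality reduces to the composition rules of $\mathcal{L}_S$ and $\mathcal{R}_S$: the composite $(x,x')(y,y')=(xy,y'x')$ maps to $(\rho_{xy},\lambda_{y'x'})=(\rho_x,\lambda_{x'})\ast(\rho_y,\lambda_{y'})$, and identities are preserved.

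Next I would show $F$ is a category isomorphism. The object map $vF$ is a biorder isomorphism from $E$ onto $E_{\Gamma_S}$, as stated after (\ref{eqpo}). Fullness is immediate since every morphism of $\mathcal{G}(\Gamma_S)$ has the form $(\rho_x,\lambda_{x'})$ by construction. For faithfulness, the equality $F((x,x'))=F((y,y'))$ in $\mathcal{G}(\Gamma_S)$ matches sources and targets, forcing $xx'=yy'=e$ and $x'x=y'y=f$ (since two $\mathscr{H}$-equivalent idempotents coincide); then $\rho_x=\rho_y$ evaluated at $e$ gives $x=ex=ey=y$, with a symmetric argument yielding $x'=y'$.

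Order-preservation in both directions is the main inductivity requirement. If $(x,x')\leq_\mathcal{G}(y,y')$, then, writing $e=xx'$, $f=x'x$, $g=yy'$, $h=y'y$: the condition $e\omega g$ gives $(Se,eS)\subseteq(Sg,gS)$; a short calculation using $x=ey$ and $x'=y'e$ shows $f=y'ey$ with $hf=fh=f$, yielding $(Sf,fS)\subseteq(Sh,hS)$; and the relations $x=ey$, $x'=y'e$ translate directly into $(\rho_x,\lambda_{x'})=(\rho_{ey},\lambda_{y'e})$, so $(\rho_x,\lambda_{x'})\leq_\Gamma(\rho_y,\lambda_{y'})$. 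Conversely, unpacking $\leq_\Gamma$ via the equality conditions for morphisms of $\mathcal{L}_S$ and $\mathcal{R}_S$ recovers $x=ey$ and $x'=y'e$, and hence the defining clauses of $\leq_\mathcal{G}$. Commutativity of (\ref{indf}) is then direct: for an $E$-chain $\mathfrak{c}=(e_0,\dots,e_n)$, both $F(\epsilon_S(\mathfrak{c}))$ and $\epsilon_\Gamma(\mathcal{G}(vF)(\mathfrak{c}))$ equal $(\rho_w,\lambda_{w'})$ with $w=e_0\cdots e_n$ and $w'=e_n\cdots e_0$, by direct comparison of the two definitions.

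The main obstacle, to the extent there is one, lies in the two-sided bookkeeping of the order-preservation step, where every identity on the $\mathcal{L}_S$-side must be paired with its dual on the $\mathcal{R}_S$-side and the second inclusion $(Sf,fS)\subseteq(Sh,hS)$ has to be extracted from the defining clauses of $\leq_\mathcal{G}$, which explicitly mention only the source idempotents. Once these dualities are carefully reconciled, the remaining verifications are routine, $F$ is an inductive functor that is simultaneously $v$-bijective and fully-faithful, and therefore an inductive isomorphism between $(\mathcal{G}(S),\epsilon_S)$ and $(\mathcal{G}(\Gamma_S),\epsilon_\Gamma)$.
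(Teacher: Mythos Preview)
Your proposal is correct and follows essentially the same approach as the paper: the paper defines the inverse functor $\Phi\colon\mathcal{G}(\Gamma_S)\to\mathcal{G}(S)$ by $v\Phi((Se,eS))=e$ and $\Phi((\rho_x,\lambda_{x'}))=(x,x')$, and checks the same list of properties. The only cosmetic differences are the direction of the functor and the fact that the paper verifies order-preservation by showing that restrictions are preserved (i.e., $\Phi((Se,eS){\downharpoonleft}(\rho_x,\lambda_{x'}))=\Phi(Se,eS){\downharpoonleft}\Phi(\rho_x,\lambda_{x'})$) rather than by comparing $\leq_\mathcal{G}$ and $\leq_\Gamma$ directly as you do; your treatment of faithfulness and of the second inclusion $(Sf,fS)\subseteq(Sh,hS)$ is in fact more explicit than the paper's.
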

	\begin{proof}
		Define a functor $\Phi\colon \mathcal{G}({\Gamma_S})$ to $\mathcal{G}(S)$ as follows:
		$$v\Phi(\mathfrak{e}) = v\Phi ((Se,eS)) = e \text{ and } \Phi((\rho_x,\lambda_{x'}))= (x,x').$$
		Clearly $\Phi$ is a covariant functor. Recall that $v\mathcal{G}({\Gamma_S})= v\mathcal{G}(E_{\Gamma_S}) = E_{\Gamma_S}$ and $E_{\Gamma_S}$ is biorder isomorphic to the biordered set $E$ of $S$ under the mapping $v\Phi\colon(Se,eS)\mapsto e$.
		
		We can see that $\Phi$ is order preserving since
		\begin{equation*}
		\begin{split}
		\Phi((Se,eS){\downharpoonleft}(\rho_x,\lambda_{x'}))&= \Phi((\rho_{ex},\lambda_{x'e}))\\
		&= ({ex},{x'e})\\
		&= e{\downharpoonleft}({x},{x'})\\
		&=\Phi(Se,eS){\downharpoonleft}\Phi(\rho_x,\lambda_{x'}),
		\end{split}
		\end{equation*}
		
		Also given an $E$-chain $(\mathfrak{e}_0,\mathfrak{e}_1,\dots,\mathfrak{e}_n)\in  \mathcal{G}(E_{\Gamma_S}) $,
		\begin{equation*}
		\begin{split}
		\mathcal{G}(v\Phi)\epsilon_S(\mathfrak{e}_0,\mathfrak{e}_1,\dots,\mathfrak{e}_n)&= \epsilon_S( e_0,e_1,\dots,e_n)\\
		&=(e_0e_1\dots e_n,e_ne_{n-1}\dots e_0)\\
		&= \Phi(\rho_{e_0e_1\dots e_n},\lambda_{e_ne_{n-1}\dots e_0})\\
		&= \epsilon_\Gamma \Phi (\mathfrak{e}_0,\mathfrak{e}_1,\dots,\mathfrak{e}_n).
		\end{split}
		\end{equation*}
		Hence the following diagram commutes:
		\begin{equation*}\label{indfun}
		\xymatrixcolsep{4pc}\xymatrixrowsep{3pc}\xymatrix
		{
			\mathcal{G}(E_{\Gamma_S}) \ar[r]^{\mathcal{G}(v\Phi)} \ar[d]_{\epsilon_\Gamma}  
			& \mathcal{G}(E) \ar[d]^{\epsilon_S} \\       
			\mathcal{G}({\Gamma_S}) \ar[r]^{\Phi} & \mathcal{G}(S) 
		}
		\end{equation*}
		Moreover, by the definition of the morphism $(\rho_x,\lambda_{x'})$ in $\mathcal{G}({\Gamma_S})$, it is clear that $\Phi$ is full and faithful. Hence $\Phi$ is an inductive isomorphism between $(\mathcal{G}({\Gamma_S}),\epsilon_\Gamma)$ and $(\mathcal{G}(S),\epsilon_S)$.
	\end{proof}
	
	\section{Cross-connections from inductive groupoids}\label{cxn}
	
	Having built `the' inductive groupoid $(\mathcal{G}(S),\epsilon_S)$ of the semigroup $S$ from the cross-connection ${\Gamma_S}=(\mathcal{R}_S,\mathcal{L}_S;{\Gamma}_S)$ in the previous section, now we attempt the converse. Given the inductive groupoid $\mathcal{G}(S)$ with biordered set $E$, we proceed to construct a cross-connection $(\mathcal{R}_G,\mathcal{L}_G;{\Gamma_G})$ and show that it is cross-connection isomorphic to ${\Gamma}_S$.
	
	On contrary to the previous section where the inductive groupoid was suitably identified from the cross-connection, here we will have to `extract' the suitable parts from the inductive groupoid $\mathcal{G}(S)$ and combine to form the required cross-connected categories $\mathcal{L}_G$ and $\mathcal{R}_G$. This shall be achieved as follows. We first build three categories from $\mathcal{G}(S)$: namely $\mathcal{P}_L$, $\mathcal{Q}_L$ and $\mathcal{G}_L$. Then these categories shall be suitably `glued' together to form the first normal category $\mathcal{L}_G$. Similarly the second category $\mathcal{R}_G$ shall be built from the categories $\mathcal{P}_R$, $\mathcal{Q}_R$ and $\mathcal{G}_R$, all extracted from the inductive groupoid $\mathcal{G}(S)$. Then we shall define a cross-connection ${\Gamma_G}$ between $\mathcal{R}_G$ and $\mathcal{L}_G$, and finally prove that it is `the' cross-connection associated the inductive groupoid. This plan is illustrated in Figure \ref{figindcxn}.
	
	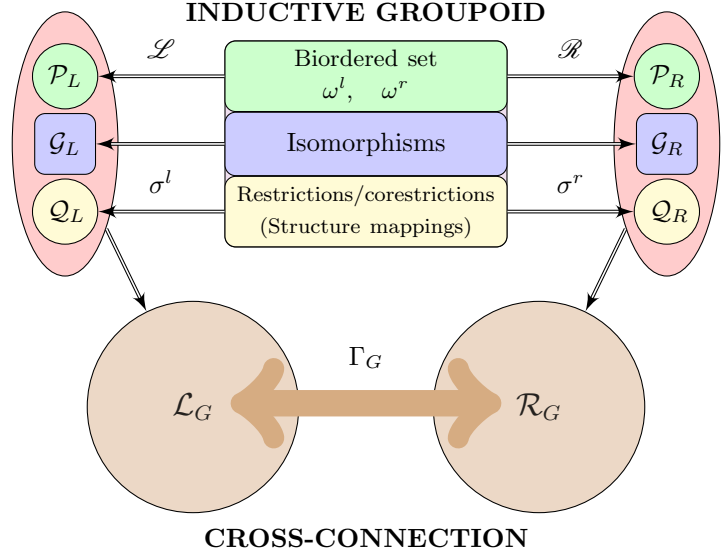
\begin{figure}
		\begin{center}
			\begin{tikzpicture}[align=center,node distance=2cm]
			{\node[rectangle, draw, fill=violet!25, fill opacity=.5, text width=10em, text centered, rounded corners, minimum height=7.8em,minimum width=5em] (ig) at (0,4){};
				\node[minimum height=.5cm, minimum width=.6cm,anchor=south] at (0,5.5) {\bf INDUCTIVE GROUPOID};
				
				\node[rectangle, draw, fill=green!20, text width=10em, text centered, rounded corners, minimum height=2.5em,minimum width=5em] (bs) at (0,4.9){\small Biordered set \\ $\omega^l, \quad \omega^r$}; 
				\node[rectangle, draw, fill=blue!20,text width=10em, text centered, rounded corners, minimum height=2.4em,minimum width=5em] (igg) at (0,4){Isomorphisms}; 
				\node[rectangle, draw, fill=yellow!20, text width=10em, text centered, rounded corners, minimum height=2.4em,minimum width=5em] (sm) at (0,3.1){\footnotesize Restrictions/corestrictions (Structure mappings)}; 
			}
			{
				\node[ellipse, draw, fill=red!20, text width=1.3em, text centered,minimum height=10em,minimum height=10em,minimum width=4em,] (lq) at (-4,4){}; 
			}
			{
				\node[circle, draw, fill=green!20, text width=1.3em, text centered] (pl) at (-4,4.9){ $\mathcal{P}_L$}; 
				\draw[-latex',double ] (bs) -- node[label=90:{ $\mathscr{L}$}] {}  (pl);
			}
			{
				\node[rectangle, draw, fill=blue!20, text width=1.2em, text centered, rounded corners, minimum height=2.3em,minimum width=2.3em] (gl) at (-4,4){ $\mathcal{G}_L$}; 
				\draw[-latex',double ] (igg) -- node[label=90:{}] {}  (gl);
			}
			{
				\node[circle, draw, fill=yellow!20, text width=1.3em, text centered] (ql) at (-4,3.1){ $\mathcal{Q}_L$}; 
				\draw[-latex',double ] (sm) -- node[label=90:{$\sigma^l$}] {}  (ql);
			}
			{
				\node[circle, draw, fill=brown!35, fill opacity=.85, text width=2em, text centered, minimum height=8em,minimum width=8em] (lg) at (-2.3,0.5){\Large $\mathcal{L}_G$}; 
				\draw[-latex',double ] (lq) -- node[label=240:{}] {}  (lg);
			}
			{
				\node[ellipse, draw, fill=red!20, text width=1.3em, text centered,minimum height=10em,minimum height=10em,minimum width=4em,] (rq) at (4,4){}; 
				\node[circle, draw, fill=green!20, text width=1.3em, text centered] (pr) at (4,4.9){ $\mathcal{P}_R$}; 
				\node[rectangle, draw, fill=blue!20, text width=1.2em, text centered, rounded corners, minimum height=2.3em,minimum width=2.3em] (gr) at (4,4){ $\mathcal{G}_R$}; 
				\node[circle, draw, fill=yellow!20, text width=1.3em, text centered] (qr) at (4,3.1){ $\mathcal{Q}_R$}; 
				
				\draw[-latex',double ] (bs) -- node[label=90:{$\mathscr{R}$}] {}  (pr);
				\draw[-latex',double ] (igg) -- node[label=90:{}] {}  (gr);
				\draw[-latex',double ] (sm) -- node[label=90:{$\sigma^r$}] {}  (qr);
			}
			
			{
				\node[circle, draw, fill=brown!35,fill opacity=.85, text width=2em, text centered,minimum height=8em,minimum width=8em] (rg) at (2.3,0.5){\Large $\mathcal{R}_G$}; 
				\draw[-latex',double ] (rq) -- node[label=360:{}] {}  (rg);
			}
			{
				\draw[brown!65,<->,line width=10pt] (-1.8,.55) --  (1.8,.55);
				\node[minimum height=.7cm, minimum width=.7cm,anchor=south] at (0,.8) {$\Gamma_G$};
				\node[minimum height=.5cm, minimum width=.6cm,anchor=south] at (0,-1.5) {\bf CROSS-CONNECTION};
			}
			\end{tikzpicture}
		\end{center}
		\caption{Cross-connection of an inductive groupoid}\label{figindcxn}
	\end{figure}
	
	First, we proceed to build the category $\mathcal{L}_G$. Given the inductive groupoid $\mathcal{G}(S)$ with regular biordered set $E$, let $v\mathcal{L}_G= E/\mathscr{L}$. This gives a partially ordered set $E/\mathscr{L}$ with respect to the order $\omega^l/\mathscr{L} =\leq_L$. In fact, $E/\mathscr{L}$ forms a regular partially ordered set, in the sense of Grillet \cite{gril}. The proof of this statement may be found in \cite{bicxn}. Given $e\in E$, in the sequel $\overleftarrow{e}$ shall denote the canonical image of $e$ in $E/\mathscr{L}$. 
	
	Now to define morphisms on $\mathcal{L}_G$, we define three categories --- $\mathcal{P}_L$, $\mathcal{Q}_L$ and $\mathcal{G}_L$ such that $v\mathcal{L}_G= v\mathcal{P}_L= v\mathcal{Q}_L= v\mathcal{G}_L=E/\mathscr{L}$. It must be mentioned here that these categories were earlier considered by Rajan \cite{rajancat,rajancat1} as subcategories derived from normal categories. Here we are building them from inductive groupoids. 
	
	First, if $\overleftarrow{e}\leq_L \overleftarrow{f}$, we define as a morphism in $\mathcal{P}_L$, a unique morphism $j_\mathcal{L}(e,f)= j_\mathcal{L}(e,e,f)$ from $\overleftarrow{e}$ to $\overleftarrow{f}$. So, given two morphisms $j_\mathcal{L}(e,f)$ and $j_\mathcal{L}(g,h)$, they are equal if and only if $e\mathscr{L}g$ and $f\mathscr{L}h$. Given $j_\mathcal{L}(e,f)$ and $j_\mathcal{L}(f,g)$, we compose them using the binary composition induced by the partial binary composition of the biordered set $E$ as follows:
	$$j_\mathcal{L}(e,f)j_\mathcal{L}(f,g) = j_\mathcal{L}(e,g).$$
	
	Observe that since $e\omega^l f$, we have $e f = e$ in $E$. Now, we show that the partially ordered set $v\mathcal{P}_L$ can be realised as a category $\mathcal{P}_L$. In fact, exactly this idea is the cornerstone of Nambooripad's generalisation of Grillet's construction.
	
	\begin{lem}
		$\mathcal{P}_L$ is a strict preorder category with the object set $v\mathcal{P}_L = E/\mathscr{L}$ and the morphisms in $\mathcal{P}_L$ as defined above. 
	\end{lem}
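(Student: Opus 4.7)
The plan is to verify in turn the four things packaged into the statement: that the morphisms are well-defined on $\mathscr{L}$-classes, that composition is associative with identities, that between any two objects there is at most one morphism (preorder), and that the only isomorphisms are identities (strictness). Everything will rest on two elementary facts about $\omega^l$ in a biordered set, namely that $\omega^l$ is reflexive and transitive and that $\omega^l$ is compatible with $\mathscr{L}$ in the sense that $e\mathscr{L}e'$, $f\mathscr{L}f'$ and $e\,\omega^l\,f$ together imply $e'\,\omega^l\,f'$.

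First I would record that the relation $\leq_L = \omega^l/\mathscr{L}$ is a well-defined partial order on $E/\mathscr{L}$, using the compatibility just mentioned together with the definition $\mathscr{L}=\omega^l\cap(\omega^l)^{-1}$. This immediately shows that the prescription "$\overleftarrow{e}\leq_L\overleftarrow{f}\implies$ there is a morphism $j_\mathcal{L}(e,f)\colon \overleftarrow{e}\to\overleftarrow{f}$" depends only on the classes and not on the representatives, and so yields a well-defined morphism class $\mathcal{P}_L(\overleftarrow{e},\overleftarrow{f})$, which moreover is either empty or a singleton. This is precisely the preorder condition (at most one morphism between any two objects).

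Next I would check the categorical axioms. For composition, given $\overleftarrow{e}\leq_L\overleftarrow{f}\leq_L\overleftarrow{g}$ transitivity of $\omega^l$ gives $\overleftarrow{e}\leq_L\overleftarrow{g}$, so $j_\mathcal{L}(e,g)$ is a legitimate morphism; the formula $j_\mathcal{L}(e,f)\,j_\mathcal{L}(f,g)=j_\mathcal{L}(e,g)$ is then independent of the representative chosen for the intermediate class $\overleftarrow{f}$ by the previous paragraph. Associativity $(j_\mathcal{L}(e,f)j_\mathcal{L}(f,g))j_\mathcal{L}(g,h)=j_\mathcal{L}(e,f)(j_\mathcal{L}(f,g)j_\mathcal{L}(g,h))$ is then immediate, both sides equalling $j_\mathcal{L}(e,h)$. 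The identity morphism at $\overleftarrow{e}$ is $j_\mathcal{L}(e,e)$, which exists by reflexivity $e\,\omega^l\,e$, and it clearly acts as identity in the composition rule. So $\mathcal{P}_L$ is a category, and by the previous paragraph a preorder.

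Finally I would verify strictness. Suppose $j_\mathcal{L}(e,f)\colon\overleftarrow{e}\to\overleftarrow{f}$ is an isomorphism; then by the preorder property its inverse must be the unique morphism $j_\mathcal{L}(f,e)\colon\overleftarrow{f}\to\overleftarrow{e}$, whose existence forces $\overleftarrow{f}\leq_L\overleftarrow{e}$. Combined with $\overleftarrow{e}\leq_L\overleftarrow{f}$ this gives $e\mathscr{L}f$, i.e.\ $\overleftarrow{e}=\overleftarrow{f}$, and the isomorphism is an identity. I do not foresee a significant obstacle: the only place one has to be slightly careful is the passage from the biordered set axioms to the $\mathscr{L}$-compatibility of $\omega^l$, and this is standard (cf.\ the discussion of Green's relations on idempotents in the preliminaries).
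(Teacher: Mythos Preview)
Your proof is correct and follows essentially the same approach as the paper: verify associativity and identities directly from the composition rule, then observe uniqueness of morphisms to get the preorder. You are in fact more thorough than the paper, which does not explicitly check well-definedness on $\mathscr{L}$-classes or spell out the strictness argument; the paper simply appeals to the uniqueness of $j_\mathcal{L}(e,f)$ and leaves strictness implicit in the fact that $\leq_L$ is a partial order.
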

	\begin{proof}
		We need to verify associativity and identity. Given morphisms $j_\mathcal{L}(e,f)$, $j_\mathcal{L}(f,g)$ and $j_\mathcal{L}(g,h)$, since
		$$(j_\mathcal{L}(e,f)\: j_\mathcal{L}(f,g))\: j_\mathcal{L}(g,h) = j_\mathcal{L}(e,h) = j_\mathcal{L}(e,f)\: (j_\mathcal{L}(f,g)\: j_\mathcal{L}(g,h)),$$
		associativity holds. Also since, 
		$$j_\mathcal{L}(e,e)\: j_\mathcal{L}(e,f) = j_\mathcal{L}(e,f)$$ and $$j_\mathcal{L}(e,f)\: j_\mathcal{L}(f,f) = j_\mathcal{L}(e,f),$$
		$j_\mathcal{L}(e,e)$ is the identity morphism at $\overleftarrow{e}$. Also observe that by the defnition, there is exactly a unique morphism $j_\mathcal{L}(e,f)$ between any two objects $\overleftarrow{e}$ and $\overleftarrow{f}$ in $v\mathcal{P}_L$. Hence $\mathcal{P}_L$ is a strict preorder category.
	\end{proof}
	
	Now, recall from \cite{grillet} the definition of structure mappings of a regular semigroup. If $f \omega^l e$, then the structure mapping $\psi^{e}_{f}\colon L_{e} \to L_{f}$ is defined as $\psi^{e}_{f}\colon  x \mapsto xf$. Dually, if $f \omega^r e$, the structure mapping $\phi^{e}_{f}\colon R_{e} \to R_{f}$ is defined as $\phi^{e}_{f}\colon  x \mapsto fx$.
	
	So, if $\overleftarrow{e}\leq_L \overleftarrow{f}$, for each idempotent $e\in \overleftarrow{e}$, we define a morphism in $\mathcal{Q}_L$ from $\overleftarrow{f}$ to $\overleftarrow{e}$ as $q_\mathcal{L}(f,fe,e)$ using the structure mapping $\psi^{f}_{e}$. Equivalently, for each $u\in E(L_{e})\cap\omega(f)$, we have a morphism $q_\mathcal{L}(f,u,e)$ from $\overleftarrow{f}$ to $\overleftarrow{e}$. It may be verified that these morphisms also constitute a category $\mathcal{Q}_L$ such that the composition is defined by semigroup composition. 
	
	Recall that the structure mappings carry the same information as the restrictions/corestrictions, which in turn capture the sandwich sets. Thus, the morphisms in $\mathcal{Q}_L$ inculcate the notion of the sandwich sets; this fact shall be elaborated later. But it must be noticed that different structure mappings may give rise to the same retraction and so the correspondence is not one-one.
	
	Further, given a morphism $(x,x') $ in the inductive groupoid $ \mathcal{G}(S)$ between $e$ and $f$, we define a morphism in $\mathcal{G}_L$ as the morphism $g_\mathcal{L}(e,x,f)$ from $\overleftarrow{e}$ and $\overleftarrow{f}$. Again, we can show that $\mathcal{G}_L$ forms a groupoid such that $(g_\mathcal{L}(e,x,f))^{-1}= g_\mathcal{L}(f,x',e)$ where $x'\in R_{f}\cap L_{e}$. (Also see \cite{rajancat2} for another independent approach by Rajan to build the groupoid $\mathcal{G}_L$.)
	
	Now, to construct the normal category $\mathcal{L}_G$ from the categories $\mathcal{P}_L$, $\mathcal{Q}_L$ and $\mathcal{G}_L$, we need the following concept of a quiver.
	
	A \emph{quiver} $\mathcal{Q}$ consists of a set of objects (denoted as $v\mathcal{Q}$) together with a set of morphisms (denoted by $\mathcal{Q}$ itself) and two functions $\mathbf{d},\mathbf{r}\colon\mathcal{Q}\rightrightarrows v\mathcal{Q}$ giving the \emph{domain} and \emph{codomain} of each morphism. Morphisms $f,g\in\mathcal{Q}$ are said to be \emph{composable} if $\mathbf{r}(f)=\mathbf{d}(g)$.
	The \emph{free category} $\overline{\mathcal{Q}}$ generated by a quiver $\mathcal{Q}$ is the category with $v\overline{\mathcal{Q}} =v\mathcal{Q}$ and with the following morphisms:
	\begin{enumerate}
		\item the identity morphisms at each object in $v\mathcal{Q}$;
		\item the morphisms in $\mathcal{Q}$;
		\item the sequences $f_1f_2\dots f_n$ of morphisms in $\mathcal{Q}$ such that $f_i$ and $f_{i+1}$ are composable for each $i=1,2,\dots,n-1$.
	\end{enumerate}
	For an illustration, the Figure \ref{figqui} shows (on the left) a quiver with three objects $a,b,c$ and three morphisms $f$, $g$ and $h$  and (on the right) the free category generated by this quiver.
	\begin{figure}
		\centering
		\begin{tikzpicture}[->,>=stealth',shorten >=1pt,auto,node distance=3cm,
		thick,main node/.style={circle,draw,font=\sffamily\Large\bfseries}]
		
		\node[state]         (0)         {$b$};
		\node[state]         (1) [below right of=0] {$c$};
		\node[state]         (2) [below left of=0] {$a$};
		
		\draw[transform canvas={yshift=0.5ex,xshift=-0.5ex},->] (2) --node{$f$} (0);
		\draw[transform canvas={yshift=-0.5ex,xshift=0.5ex},->] (2) --node[below]{$g$} (0);
		\path (0) edge   node {$h$} (1);

		\node                (7) [right of=0] {};
		
		\node[state]         (3) [right of=7]       {$b$};
		\node[state]         (4) [below right of=3] {$c$};
		\node[state]         (5) [below left of=3] {$a$};
		
		\path (3) edge   node {$h$} (4)
		
		(3) edge    [loop above]      node {$1_b$} (3)
		
		(4) edge    [loop above]      node {$1_c$} (4)
		
		(5) edge    [loop above]       node {$1_a$} (5);
		
		\draw[transform canvas={yshift=0.5ex,xshift=-0.5ex},->] (5) --node{$f$} (3);
		\draw[transform canvas={yshift=-0.5ex,xshift=0.5ex},->] (5) --node[below]{$g$} (3);
		\draw[transform canvas={yshift=0.75ex},->] (5) --node{$fh$} (4);        
		\draw[transform canvas={yshift=-0.75ex},->] (5) --node[below]{$gh$} (4);   
		
		\end{tikzpicture}
		\caption{The free catgeory generated by a quiver}\label{figqui}
	\end{figure}
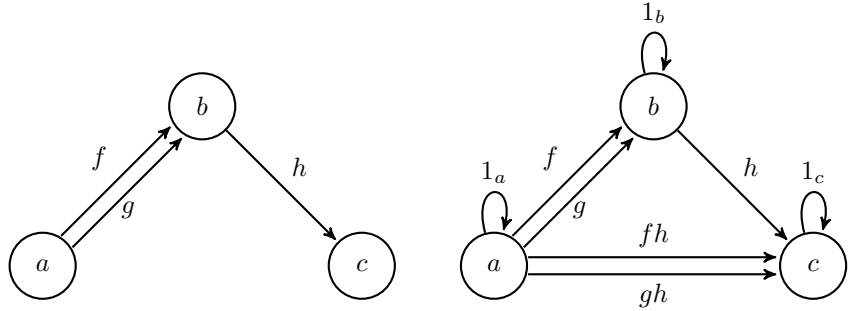
	
	
	Proceeding with our construction, we define $\mathcal{L}_Q$ with object set $v\mathcal{L}_Q= E/\mathscr{L}$ and morphisms as follows:
	$$\mathcal{L}_Q=\mathcal{P}_L \cup \mathcal{Q}_L \cup \mathcal{G}_L.$$
	
	Then clearly $\mathcal{L}_Q$ forms a quiver. Observe that given an arbitrary morphism $r=r(e,u,f)$ in $\mathcal{L}_Q$, the morphism $r$ belongs to the category $\mathcal{P}_L$ if $u=e$, the morphism $r$ belongs to $\mathcal{Q}_L $ if $u\in E(L_{f})\cap\omega(e)$ and the morphism $r$ belongs to the category $ \mathcal{G}_L$ if $u \in R_e\cap L_f$. 
	
	Now let $r_1=r(e,u,f)$ and $r_2=r(g,v,h)$ denote any two morphisms in the quiver $\mathcal{L}_Q$ from $\overleftarrow{e}$ to $\overleftarrow{f}$ and $\overleftarrow{g}$ to $\overleftarrow{h}$ respectively. Then the morphisms $r_1$ and $r_2$ of $\mathcal{L}_Q$ are said to be composable if $f\mathscr{L}g$. Also, we define a composition using the semigroup composition as follows:
	$$r(e,u,f)\: r(g,v,h) = r(e,uv,h).$$
	
	Let $\overline{\mathcal{L}_Q}$ be the free category generated by the quiver $\mathcal{L}_Q$ under the above composition. Observe that $u\in eSf$ and $v\in fSg$ for any morphism in $\mathcal{L}_Q$ and so we can see that $uv=x\in eSg$ for any morphism $r(e,x,g)$ in $\overline{\mathcal{L}_Q}$. Conversely, given any element $x\in eSf$, we can see that 
	\begin{equation}\label{normf}
	r(e,x,f) = r(e,g,g)r(g,x,h)r(h,h,f)
	\end{equation}
	where $h\in E(L_x)$ and $g\in E(R_x)\cap \omega(e)\neq\emptyset$ (since $S$ is regular). Then $r(e,g,g)\in \mathcal{Q}_L$, $r(g,x,h)\in \mathcal{G}_L$ and $r(h,h,g) \in \mathcal{P}_L$; hence $r(e,x,f)\in \overline{\mathcal{L}_Q}$.
	Further given any two morphisms (by abuse of notation) $r(e,u,f)$ and $r(g,v,h)$ in $\overline{\mathcal{L}_Q}$, we define a relation $\sim$ between the morphisms as 
	$$r(e,u,f)\sim r(g,v,h) \iff e\mathscr{L} g, f\mathscr{L} h, u=ev.$$ 
	\begin{lem}
		$\sim$ is an equivalence relation on the set $\overline{\mathcal{L}_Q}$.
	\end{lem}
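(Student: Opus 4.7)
The plan is to verify reflexivity, symmetry, and transitivity directly, exploiting two elementary facts that are already available: (i) for idempotents $e \mathscr{L} g$ one has $eg = e$ and $ge = g$ (since $e \in Se = Sg$ forces $e = xg$, hence $eg = e$), and (ii) for any morphism $r(e,u,f)$ of $\overline{\mathcal{L}_Q}$ the element $u$ lies in $eSf$, so $eu = u$ and $uf = u$.

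For reflexivity of $r(e,u,f) \sim r(e,u,f)$, the $\mathscr{L}$-conditions are trivial and $eu = u$ follows from $u \in eSf$. For symmetry, assume $e\mathscr{L} g$, $f\mathscr{L} h$ and $u = ev$; then $gu = g(ev) = (ge)v = gv$ by fact (i), and $gv = v$ by fact (ii) applied to $v \in gSh$, yielding $v = gu$. For transitivity, from $u = ev$ and $v = gw$ with $e\mathscr{L} g$ we compute $u = e(gw) = (eg)w = ew$, while the $\mathscr{L}$-relations compose automatically.

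The argument is routine and contains no genuine obstacle; the one minor point worth making explicit is that facts (i) and (ii) above are precisely what force the definition of $\sim$ to pass from "$u = ev$" to "$v = gu$", so the definition is symmetric in disguise. This justifies passing to the quotient $\overline{\mathcal{L}_Q}/\!\sim$ in the subsequent construction of $\mathcal{L}_G$.
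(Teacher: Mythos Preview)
Your proof is correct and follows essentially the same route as the paper's own argument: both verify reflexivity, symmetry, and transitivity directly using the identities $ge=g$, $eg=e$ for $e\mathscr{L}g$ and $gv=v$ for $v\in gSh$. Your version is in fact slightly more explicit than the paper's, which dismisses reflexivity with ``clearly'' and runs the symmetry chain as $v = gv = (ge)v = g(ev) = gu$ without naming the underlying facts.
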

	\begin{proof}
		Clearly $\sim$ is reflexive. 
		
		Suppose  $r(e,u,f)\sim r(g,v,h)$, then $e\mathscr{L} g$, $f\mathscr{L} h$ and  $u=ev$. Since, 
		$$v = gv = (ge)v = g(ev) = gu,$$
		so $r(g,v,h) \sim r(e,u,f)$. Hence $\sim$ is symmetric.
		
		Also if $r(e,u,f)\sim r(g,v,h)$ and $r(g,v,h)\sim r(k,w,l)$, then $e\mathscr{L} g\mathscr{L} k$, $f\mathscr{L}h \mathscr{L}  l$, $u=ev$ and $v=gw$.
		Then $$u=ev=e(gw)= (eg)w= ew.$$
		
		So $r(e,u,f)\sim  r(k,w,l)$ and $\sim$ is transitive.
		Hence $\sim$ is an equivalence relation.
	\end{proof}
	
	Now we are in a position to define the first normal category $\mathcal{L}_G$ of the required cross-connection as the quotient of the category $\overline{\mathcal{L}_Q}$ over $\sim$. Recall that the set of of objects $v\mathcal{L}_G$ is the partially ordered set $E/\mathscr{L}$. A morphism in the category $\mathcal{L}_G$ is defined as the $\sim$-class of $r(e,u,f)$ in $\overline{\mathcal{L}_Q}$. In the sequel, again by abuse of notation, we shall denote the $\sim$-class of $r(e,u,f)$ as a morphism in $\mathcal{L}_G$ by $r(e,u,f)$ itself. 
	
	We have already seen that any element of $eSf$ gives rise to a morphism. Also observe that if $r(e,u,f)=r(e,v,f)$ in $\mathcal{L}_G$, for $u,v\in eSf$, then since $u= ev= v$, the correspondence is one-one. Hence we have a bijection between the morphisms between $\overleftarrow{e}$ and $\overleftarrow{f}$ in $\mathcal{L}_G$ with the set $eSf$.
	
	Now we proceed to show that $\mathcal{L}_G$ forms a category under the same composition as in $\mathcal{L}_Q$: 
	$$r(e,u,f)\: r(f,v,g) = r(e,uv,g).$$
	
	\begin{lem}
		$\mathcal{L}_G$ is a category.
	\end{lem}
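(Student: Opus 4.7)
The plan is to check the three axioms for a category: well-definedness of composition on $\sim$-classes, associativity, and existence of identities. Since associativity will be immediate from associativity of the semigroup product and the identity check is a one-line verification, the main obstacle will be showing that the composition defined on representatives descends to a well-defined operation on $\sim$-classes.

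First I would verify that $\sim$ is a congruence with respect to the composition of $\overline{\mathcal{L}_Q}$. So suppose $r(e,u,f)\sim r(g,v,h)$ and $r(f,w,k)\sim r(h,z,l)$; I need to show $r(e,uw,k)\sim r(g,vz,l)$. The relations $e\,\mathscr{L}\,g$ and $k\,\mathscr{L}\,l$ are immediate from the hypothesis, so the content is the identity $uw = e(vz)$. The key observation is that since $v\in gSh$ we have $v=gvh$, and since $f\,\mathscr{L}\,h$ we have $hf=h$; hence $vf = gvhf = gvh = v$. Using the hypotheses $u=ev$ and $w=fz$ then gives
\begin{equation*}
uw = (ev)(fz) = e(vf)z = e(vz),
\end{equation*}
as required. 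This is the crux of the lemma.

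Next I would check associativity: for representatives $r(e,u,f)$, $r(f,v,g)$, $r(g,w,h)$, both bracketings equal $r(e,uvw,h)$ by associativity of semigroup multiplication, so the inherited operation on $\sim$-classes is associative. For identities I would verify that $r(e,e,e)$ acts as $1_{\overleftarrow{e}}$: given a morphism $r(e,u,f)$ with $u\in eSf$, we have $eu = u$ and $uf = u$, hence $r(e,e,e)\,r(e,u,f) = r(e,u,f)$ and $r(e,u,f)\,r(f,f,f) = r(e,u,f)$ at the level of representatives, so a fortiori in the quotient.

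The subtle point that I want to highlight separately is that the composition in $\overline{\mathcal{L}_Q}$ was defined only when the codomain and domain match literally (i.e.\ $r_1=r(e,u,f)$ and $r_2=r(g,v,h)$ are composable iff $f\,\mathscr{L}\,g$), whereas in $\mathcal{L}_G$ composition must be defined whenever the objects $\overleftarrow{f}$ and $\overleftarrow{g}$ coincide. The congruence property above handles this: given $\sim$-classes $[r(e,u,f)]$ and $[r(g,v,h)]$ with $\overleftarrow{f}=\overleftarrow{g}$, we can replace the second representative by $r(f,fv,h)$ in the same $\sim$-class and then compose, and the congruence argument ensures that the resulting $\sim$-class is independent of this choice. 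With this remark the three axioms combine to prove that $\mathcal{L}_G$ is indeed a category.
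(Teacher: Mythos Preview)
Your proof is correct and follows the same line as the paper's: associativity is inherited from the semigroup multiplication, and $r(e,e,e)$ serves as the identity at $\overleftarrow{e}$. You actually go further than the paper, which dispatches the lemma in two lines and does not verify that $\sim$ is a congruence for the composition; your computation $uw = (ev)(fz) = e(vf)z = e(vz)$ fills that gap and is a worthwhile addition.
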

	\begin{proof}
		We need to verify associativity and identity. Associativity follows from the associativity of the semigroup multipication. Given a morphism $r(e,u,f)$ in $\mathcal{L}_G$,
		$$r(e,u,f)r(f,f,f)=r(e,u,f)\text{ and }r(e,e,e)r(e,u,f)=r(e,u,f).$$ 
		So, $\rho(e,e,e)$ is the identity morphism at $\overleftarrow{e}$. Hence $\mathcal{L}_G$ is a category.
	\end{proof}
	
	Observe that the category $\mathcal{P}_L$ is a subcategory of $\mathcal{L}_G$. We can also realise $\mathcal{Q}_L$ and $\mathcal{G}_L$ (in fact, the $\sim$-images of these categories) as subcategories of $\mathcal{L}_G$.
	
	\begin{lem}\label{lemcso}
		$(\mathcal{L}_G,\mathcal{P}_L)$ is a category with subobjects.
	\end{lem}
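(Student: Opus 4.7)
The plan is to verify the three conditions in the definition of a category with subobjects: (i) $\mathcal{P}_L$ is a strict preorder with $v\mathcal{P}_L = v\mathcal{L}_G$, (ii) every morphism in $\mathcal{P}_L$ is a monomorphism in $\mathcal{L}_G$, and (iii) whenever $f = hg$ with $f,g \in \mathcal{P}_L$ and $h \in \mathcal{L}_G$, one has $h \in \mathcal{P}_L$. Condition (i) is immediate: we already showed that $\mathcal{P}_L$ is a strict preorder, and both categories share the vertex set $E/\mathscr{L}$; moreover the composition rule for $j_\mathcal{L}(e,f)$ in $\mathcal{P}_L$ is consistent with the composition $r(e,e,f) \cdot r(f,f,g) = r(e,eg,g) = r(e,e,g)$ in $\mathcal{L}_G$ since $ef=e$ forces $eg = e$ when $e\omega^l f\omega^l g$, so $\mathcal{P}_L$ sits inside $\mathcal{L}_G$ as a subcategory.

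For (ii), I take $j_\mathcal{L}(e,f) = r(e,e,f)$ with $e\omega^l f$ and parallel morphisms $r(a,u,e), r(a,v,e)$ in $\mathcal{L}_G$ whose postcompositions with $j_\mathcal{L}(e,f)$ agree. This composition yields $r(a,ue,f)$ and $r(a,ve,f)$; since their sources and targets coincide, the $\sim$-relation reduces to $ue = ve$. But $u,v \in aSe$ already satisfy $ue = u$ and $ve = v$, so $u = v$, which gives right-cancellativity.

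For (iii), suppose $j_\mathcal{L}(a,b) = h \cdot j_\mathcal{L}(c,b)$ with $a\omega^l b$, $c\omega^l b$, and $h = r(a,u,c)$ for some $u \in aSc$. The equation reads $r(a,uc,b) = r(a,a,b)$ in $\mathcal{L}_G$; since the sources and targets match, the $\sim$-relation collapses to the equality $uc = a$. Combined with $uc = u$ (from $u \in aSc$), this forces $u = a$, and then $ac = uc = a$ shows $a\omega^l c$. Therefore $h = r(a,a,c) = j_\mathcal{L}(a,c) \in \mathcal{P}_L$, as required.

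I expect the only subtle point to be bookkeeping with the $\sim$-relation when two morphisms in $\mathcal{L}_G$ share their source and target $\mathscr{L}$-classes but are represented by different biordered-set data; the trick in both (ii) and (iii) is that once the endpoints of the $\sim$-class are fixed to identical idempotents (which is always possible by choosing canonical representatives), $\sim$ degenerates to actual equality of the middle element, and the rest is a small biordered-set calculation. Nothing here uses more than the definitions of $\omega^l$, the semigroup multiplication, and the descriptions of $aSe$, so no deep inductive-groupoid input is needed for this lemma.
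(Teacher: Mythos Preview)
Your proof is correct and follows essentially the same approach as the paper's: both verify the monomorphism and factorisation conditions by direct computation with the $\sim$-relation. The only cosmetic difference is that you fix canonical representatives (same source idempotent) so that $\sim$ collapses to equality of the middle element, whereas the paper keeps arbitrary $\mathscr{L}$-related representatives and carries the extra $g\mathscr{L}e$ bookkeeping; your choice makes the calculation slightly cleaner but the underlying argument is identical.
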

	\begin{proof}
		Clearly, $\mathcal{L}_G$ is a small category and $\mathcal{P}_L$ is a strict preorder subcategory of $\mathcal{L}_G$ such that $v\mathcal{L}_G = v\mathcal{P}_L$. Let $r(e,e,f)$ be a morphism in $\mathcal{P}_L$. Then if $r(g,u,e)r(e,e,f)=r(h,v,e)r(e,e,f)$, then $r(g,ue,f)=r(h,ve,f)$. So $g\mathscr{L}h$, $e\mathscr{L}e$ and  
		$$u=ue=gve=(gv)e=ue.$$
		Hence $r(g,u,e)=r(h,v,e)$, and so $r(e,e,f)\in \mathcal{P}_L$ is a monomorphism.
		
		Now if $r(e,e,f)=r(g,u,h)r(h,h,k)$ for $r(e,e,f),r(h,h,k)\in \mathcal{P}_L$ and $r(g,u,h) \in \mathcal{L}_G$, then 
		since $r(g,u,h)r(h,h,k)=r(g,uh,k)$, we have $r(e,e,f) = r(g,uh,k)$. So, $e\mathscr{L}g$ and $f\mathscr{L}k$. Since
		$u=uh=ge=g$, we have $r(g,u,h)=r(g,g,h)$, i.e., $r(g,u,h)\in \mathcal{P}_L$.
		Hence $(\mathcal{L}_G,\mathcal{P}_L)$ is a category with subobjects.
	\end{proof}
	
	So, a morphism $r(e,e,f)=j_\mathcal{L}(e,f) \in \mathcal{P}_L$ is an \emph{inclusion} in $\mathcal{L}_G$. Given an inclusion $j_\mathcal{L}(e,f)$, we can see that
	$$r(e,e,f)r(f,fe,e)= r(e,efe,e)=r(e,e,e).$$
	Hence, we have the following lemma.
	\begin{lem}\label{lemsplit}
		Every inclusion in $\mathcal{L}_G$ \emph{splits}.
	\end{lem}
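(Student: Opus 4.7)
The plan is to exhibit an explicit retraction for an arbitrary inclusion. Given an inclusion $j_\mathcal{L}(e,f)=r(e,e,f)\in\mathcal{P}_L$, by definition we have $\overleftarrow{e}\leq_L\overleftarrow{f}$, which (after choosing representatives) means $e\omega^l f$, i.e., $ef=e$. The natural candidate for a retraction $q\colon\overleftarrow{f}\to\overleftarrow{e}$ is the morphism $r(f,fe,e)$ coming from the category $\mathcal{Q}_L$ of structure mappings; this is precisely the $\mathcal{Q}_L$-morphism induced by the structure mapping $\psi^f_e$ applied to $f$.

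The first step is to verify that $r(f,fe,e)$ is actually a legitimate morphism in $\mathcal{L}_G$. For this I need $fe\in E(L_e)\cap\omega(f)$. Using $ef=e$, a short computation gives $(fe)^2=f(ef)e=fee=fe$, so $fe\in E(S)$; also $efe=(ef)e=ee=e$, which yields $Se=S\cdot fe$ via $e=e\cdot fe$ and the obvious inclusion $fe\in Se$, so $fe\mathrel{\mathscr{L}}e$; finally $(fe)f=f(ef)=fe$ and $f(fe)=fe$, so $fe\,\omega\,f$. Hence $fe\in E(L_e)\cap\omega(f)$, and $r(f,fe,e)$ is a bona fide morphism of $\mathcal{Q}_L\subseteq\mathcal{L}_G$.

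The second step is the composition, which is straightforward using the defining rule $r(e,u,f)\,r(f,v,g)=r(e,uv,g)$ and the identity $efe=e$ established above:
\[
r(e,e,f)\,r(f,fe,e)=r(e,e\cdot fe,e)=r(e,efe,e)=r(e,e,e)=1_{\overleftarrow{e}}.
\]
Thus $r(f,fe,e)$ is the required retraction splitting the inclusion $j_\mathcal{L}(e,f)$.

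There is no serious obstacle here; the only subtlety is the well-definedness check that $r(f,fe,e)$ is indeed a $\sim$-equivalence class of morphism from $\overleftarrow{f}$ to $\overleftarrow{e}$, which is handled by verifying the idempotent and $\mathscr{L}$-class membership of $fe$ as above. In particular, if one instead chooses a different representative $e'\mathrel{\mathscr{L}}e$ of $\overleftarrow{e}$ and a different $f'\mathrel{\mathscr{L}}f$ of $\overleftarrow{f}$, the resulting retraction $r(f',f'e',e')$ is $\sim$-equivalent, so the construction descends to the quotient $\mathcal{L}_G$ and the splitting is independent of the representatives chosen.
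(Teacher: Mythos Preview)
Your proof is correct and follows exactly the same approach as the paper: exhibit $r(f,fe,e)$ as the splitting of the inclusion $r(e,e,f)$ and compute $r(e,e,f)\,r(f,fe,e)=r(e,efe,e)=r(e,e,e)$. You have simply added the verification that $fe\in E(L_e)\cap\omega(f)$ and a remark on independence of representatives, details the paper leaves implicit.
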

	Also observe that a right inverse $r(f,fe,e)$ of an inclusion is a morphism in $\mathcal{Q}_L$. We can show that, in fact, every morphism in $\mathcal{Q}_L$ is a right inverse of an inclusion. Hence every morphism in $\mathcal{Q}_L$ is a \emph{retraction} in $\mathcal{L}_G$.
	
	Also, for a morphism $r(e,x,f)\in \mathcal{G}_L\subseteq \mathcal{L}_G$, there is a morphism $r(f,x',e)\in \mathcal{G}_L$ such that 
	$$r(e,x,f)r(f,x',e)=r(e,xx',e)=r(e,e,e)$$
	and 
	$$r(f,x',e)r(e,x,f)=r(f,x'x,f)=r(f,f,f).$$ 
	So a morphism in $\mathcal{G}_L$ is an \emph{isomorphism} in the category $\mathcal{L}_G$.
	
	Hence, given any morphism $r=r(e,x,f)$ in $\mathcal{L}_G$, by (\ref{normf}), it has a \emph{normal factorization}. Then the morphism $r^\circ = r(e,gx,h)$ is the \emph{epimorphic component} of the morphism $r$.
	
	Now, we proceed to discuss the normal cones in the category $\mathcal{L}_G$. First compare Definition \ref{dfnnm} and Definition \ref{dfnnc} to see that a normal cone is the exact categorical generalisation of a normal mapping. We shall now construct some important normal cones in $\mathcal{L}_G$ which arise from the inductive structure of $S$. 
	
	Let $(x,x')$ be a morphism from $e$ to $f$ in $\mathcal{G}(S)$. Observe that $f=x'x\in E(L_x)$. Consider an arbitrary idempotent $g\in E$. Then since $S$ is regular, the sandwich set $\mathcal{S}(g,e)$ is not empty. Let $h\in \mathcal{S}(g,e)$ so that $h\omega^re$. Then $he\omega e$ and we have the restriction of $(x,x')$ to $he$, i.e., $he{\downharpoonleft}(x,x')=(hex,x'he)$ which arises from the structure mapping $\phi^e_h$. Similarly, we have a structure mapping $\psi^g_h$ which give rise to a retraction $r(g,gh,h)$ from $\overleftarrow{g}$ to $\overleftarrow{h}=\overleftarrow{gh}$. 
	
	Then the morphism $r(h,he,he)$ is an isomorphism from $\overleftarrow{h}$ to $\overleftarrow{he}$ which arises from the evaluation of the E-chain $(h,he)\in \mathcal{G}(E)$. The morphism $r(he,hex,k)$ which is also an isomorphism from $\overleftarrow{he}$ to $\overleftarrow{k}$, where $k= x'hex$, arises from the restriction $(hex,x'he)$ of the morphism $(x,x')$ to $he$. Observe that we are only using the `left part' of the morphism $(hex,x'he)$. Also since $k\omega f$, we have an inclusion $r(k,k,f)$ from $\overleftarrow{k}$ to $\overleftarrow{f}$. This can be illustrated using Figure \ref{figncone}.
	
	\begin{figure}
		\centering
		\begin{equation*}
		\xymatrixcolsep{4pc}\xymatrixrowsep{3pc}\xymatrix
		{
			&e \ar@{-->}@/^.7pc/[r]^{(x,x')} \ar@{.}[rdd]_{\phi^e_h}^{\omega}&f\\  
			g\ar@{->}[ddr]_{\omega}^{\psi^g_h}\\
			&h \ar[r]^{\mathscr{R}}&he\ar@{.}[d]\ar@/^.7pc/[r]^{(hex,x'he)} &k\ar[luu]_{\omega}\\      
			&gh \ar@{.}[u]_{\mathscr{L}}\ar@{.}[r] & ge 
		}
		\end{equation*}
		\caption{Normal cone arising from a morphism in the inductive groupoid}\label{figncone}
	\end{figure}
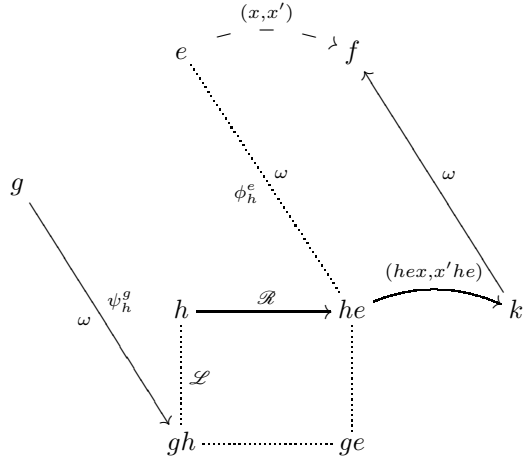
	
	In Figure \ref{figncone}, the dashed arrow represents the morphism $(x,x')\in\mathcal{G}(S)$. The solid arrows give rise to the relevant morphisms in $\mathcal{L}_G$ arising from the inductive structure of $S$. 
	
	Composing these morphisms, we get a morphism from $\overleftarrow{g}$ to $\overleftarrow{f}$ in $\mathcal{L}_G$ as given below:
	\begin{equation*}
	\begin{split}
	r(g,gh,h)r(h,he,he)r(he,hex,k)r(k,k,f)&=r(g,ghehexk,f)\\
	&=r(g,ghe(hexk),f)\\
	&=r(g,g(hehe)x,f)\\
	&=r(g,(ghe)x,f)\\
	&=r(g,(ge)x,f)\\
	&=r(g,gx,f).
	\end{split}
	\end{equation*}
	Observe that the morphism is independent of the choice of the sandwich element. Similarly we can build morphisms from every $\overleftarrow{g}\in v\mathcal{L}_G$, and so we define a map $r^x\colon v\mathcal{L}_G \to \mathcal{L}_G $ as follows:
	$$r^x(\overleftarrow{g})=r(g,gx,f).$$
	Clearly $r^x$ satisfies (Ncone1). If $\overleftarrow{g}' \leq_L \overleftarrow{g}$, then $g'\omega^lg$ and so we have an inclusion $r(g',g',g)$. Then 
	$$r(g',g',g)r^x(\overleftarrow{g})=r(g',g',g)r(g,gx,f) =r(g',g'gx,f)= r(g',g'x,f).$$ 
	Also since  $r^x(\overleftarrow{g}')=r(g',g'x,f)$, (Ncone2) is also satisfied.
	
	Since $ex=x$, we see that $r^x(\overleftarrow{e})=r(e,ex,f)=r(e,x,f)$ which is nothing but the isomorphism in $\mathcal{L}_G$ arising from the morphism $(x,x')\in \mathcal{G}(S)$. Hence we have an isomorphism component for $r^x$ and (Ncone3) is also satisfied. 
	
	Thus $r^x$ is a normal cone in $\mathcal{L}_G$ with apex $\overleftarrow{f}$. This normal cone shall be called, by abuse of terminology, as the principal cone induced by the element $x\in S$ in the category $\mathcal{L}_G$. (Also see Section \ref{cxnrs}.) Observe how the sandwich set $\mathcal{S}(g,e)$ is `partially' encoded into the normal cone $r^x$ through the retraction $r(g,gh,h)$ and the isomorphism $r(h,hex,k)$. Also observe that these morphisms arose via the structure mappings $\psi^g_h$ and $\phi^e_h$. 
	
	As a result of these inbuilt sandwich elements in the normal cones, unlike morphisms in inductive groupoids, actually it is relatively straightforward to compose principal cones. Given two principal cones $r^a$ and $r^b$ with apices $\overleftarrow{e}$ and $\overleftarrow{f}$ respectively, we get a new principal cone $r^a\star r^b$ as follows:
	\begin{equation}\label{compprinc}
	r^a\star r^b (\overleftarrow{g}) = r^a(\overleftarrow{g}) (r^b(\overleftarrow{e}))^\circ.
	\end{equation}  
	Here $(r^b(\overleftarrow{e}))^\circ$ represents the epimorphic component of the morphism $r^b(\overleftarrow{e})$. Observe that since $aeb=ab$, for $h\in E(L_{eb})=E(L_{ab})$,
	$$r^a\star r^b (\overleftarrow{g})=r(g,ga,e)r(e,eb,h) = r(g,gaeb,h)=r(g,gab,h)=r^{ab} (\overleftarrow{g}).$$
	Hence if $e\in E$ is an idempotent, $r^e$ is an idempotent normal cone.
	
	Now, we are in a position to prove the normality of the category $\mathcal{L}_G$.
	\begin{thm}
		$(\mathcal{L}_G,\mathcal{P}_L)$ forms a normal category.
	\end{thm}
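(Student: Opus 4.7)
The plan is to verify the three axioms (NC1), (NC2), (NC3) of a normal category; essentially all the building blocks have already been set up in the lemmas and constructions preceding the theorem, so the proof is mostly a matter of assembly.

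First, axiom (NC2) is exactly Lemma \ref{lemsplit}: every inclusion $j_\mathcal{L}(e,f) \in \mathcal{P}_L$ splits via the retraction $r(f,fe,e) \in \mathcal{Q}_L$, with the computation $r(e,e,f)\,r(f,fe,e) = r(e,efe,e) = r(e,e,e)$. Nothing more is needed here.

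Second, for (NC1), I would invoke the factorization identity in equation (\ref{normf}): given an arbitrary morphism $r(e,x,f)$ with $x \in eSf$, pick $g \in E(R_x)\cap\omega(e)$ (nonempty by regularity of $S$) and $h \in E(L_x)$, and write
\begin{equation*}
r(e,x,f) \;=\; r(e,g,g)\,r(g,x,h)\,r(h,h,f).
\end{equation*}
The observations recorded immediately after Lemma \ref{lemsplit} identify $r(e,g,g) \in \mathcal{Q}_L$ as a retraction, $r(g,x,h) \in \mathcal{G}_L$ as an isomorphism (since $g\,\mathscr{R}\,x\,\mathscr{L}\,h$), and $r(h,h,f) \in \mathcal{P}_L$ as an inclusion. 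This is a normal factorization in the required sense, so (NC1) holds.

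Third, for (NC3), the key input is the principal cone construction that was carried out just before the theorem. For each object $\overleftarrow{e} \in v\mathcal{L}_G$, pick any idempotent $e$ in the $\mathscr{L}$-class and consider $r^e$ obtained by taking $x = e$ (so that $x' = e$ and the apex is $\overleftarrow{x'x} = \overleftarrow{e}$). By the computation showing that $r^{ab}$ arises from $r^a \star r^b$, the cone $r^e$ is idempotent, and evaluating its component at $\overleftarrow{e}$ gives
\begin{equation*}
r^e(\overleftarrow{e}) \;=\; r(e,\,e\cdot e,\,e) \;=\; r(e,e,e) \;=\; 1_{\overleftarrow{e}},
\end{equation*}
which is precisely what (NC3) requires.

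The hard part is not any of these three verifications individually — it is the work already done in constructing $r^x$ and checking that it genuinely satisfies (Ncone1)--(Ncone3). That construction is where the regularity of the biordered set $E$ is used essentially, through the nonemptiness of the sandwich set $\mathcal{S}(g,e)$ and the fact that the morphism $r(g,gx,f)$ is independent of the chosen sandwich element $h$. Once those facts are in hand, together with Lemmas \ref{lemcso} and \ref{lemsplit}, the theorem reduces to the bookkeeping above.
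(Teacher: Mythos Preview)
Your proposal is correct and follows essentially the same approach as the paper: both invoke Lemma~\ref{lemcso} for the category-with-subobjects structure, equation~(\ref{normf}) for (NC1), Lemma~\ref{lemsplit} for (NC2), and the principal cone $r^e$ for (NC3). Your treatment is slightly more explicit in verifying $r^e(\overleftarrow{e}) = 1_{\overleftarrow{e}}$, whereas the paper simply notes that $r^e$ is idempotent (which suffices by the earlier remark that idempotent cones have identity component at their apex), but the substance is identical.
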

	\begin{proof}
		By Lemma \ref{lemcso}, $(\mathcal{L}_G,\mathcal{P}_L)$ is a category with subobjects. As already seen, any morphism in $\mathcal{L}_G$ has a normal factorization as described in (\ref{normf}) and hence (NC1) is satisfied. By Lemma \ref{lemsplit} above, every inclusion in $\mathcal{L}_G$ splits, thus satisfying (NC2). Moreover, for an object $\overleftarrow{e}\in\mathcal{L}_G$, we see that $r^e$ is an idempotent normal cone with apex $\overleftarrow{e}$. Hence (NC3) is also holds. Thus $(\mathcal{L}_G,\mathcal{P}_L)$ is a normal category.
	\end{proof}
	
	Recall that Grillet constructed an intermediary regular semigroup $N(X)$ of all normal mappings from a regular partially ordered set $X$. This regular semigroup may be seen as a generic regular semigroup arising from the regular partially ordered set $X$. One can see that Nambooripad's semigroup $T\mathcal{C}$ of all normal cones arising from a normal category $\mathcal{C}$ is an exact generalisation of this, where the semigroup composition (see (\ref{eqnsg1}) in Section \ref{cxns}) arises from the composition of principal cones as given above in (\ref{compprinc}). It must be admitted here that the `genericness' of the semigroup $T\mathcal{C}$ is yet to be properly formulated and established. An interested reader can find some discussion on this, focussing on some concrete special classes in \cite{sunny,var,tlx,tx}.
	
	Having built the normal category $\mathcal{L}_G$ from the inductive groupoid $\mathcal{G}(S)$, dually, we can build a normal category $(\mathcal{R}_G,\mathcal{P}_R)$ as follows. Let $v\mathcal{R}_G=v\mathcal{P}_R= E/\mathscr{R}$. This gives a regular partially ordered set with respect to $\omega^r/\mathscr{R} =\leq_R$. We shall denote by $\overrightarrow{e}$, the canonical image of $e$ in $E/\mathscr{R}$.
	
	Now if $\overrightarrow{e}\leq_R \overrightarrow{f}$, then we define the unique morphism $j_\mathcal{R}(e,f)$ from $\overrightarrow{e}$ to $\overrightarrow{f}$ as a morphism in $\mathcal{P}_R$. This collection of morphisms forms a strict preorder category $\mathcal{P}_R$ with the object set $E/\mathscr{R}$ under the following composition: 
	$$j_\mathcal{R}(e,f)\: j_\mathcal{R}(f,g)= j_\mathcal{R}(e,g)$$
	Observe that since $e\omega^rf$, $f\: e=e \in E$. 
	
	Then we define the category $\mathcal{Q}_R$ with object set $v\mathcal{Q}_R=E/\mathscr{R}$ as follows. If $\overrightarrow{e} \leq_R \overrightarrow{f}$, for each $e\in \overrightarrow{e}$, $q_\mathcal{R}(f,ef,e)$ from $\overrightarrow{f}$ to $\overrightarrow{e}$ using the structure mapping $\phi^f_e$, i.e., for each $h\in E(R_{e})\cap\omega(f)$, we have a morphism $\lambda(f,h,e)$ from $\overrightarrow{f}$ to $\overrightarrow{e}$.
	
	Further, given a morphism $(x,x') \in \mathcal{G}(S)$ from $e$ to $f$, we define a category $\mathcal{G}_R$ with $v\mathcal{G}_R=E/\mathscr{R}$ and a morphism $g_\mathcal{R}(e,x',f)$ from $\overrightarrow{e}$ to $\overrightarrow{f}$ in $\mathcal{G}_R$.
	
	Let $\mathcal{R}_Q$ be a quiver with $v\mathcal{R}_Q= E/\mathscr{R}$ and morphisms  
	$$\mathcal{R}_Q=\mathcal{P}_R\cup\mathcal{Q}_R\cup\mathcal{G}_R.$$ 
	If $l(e,u,f)$ and $l(f,v,g)$ denote any two morphisms in $\mathcal{R}_Q$ from $\overrightarrow{e}$ to $\overrightarrow{f}$ and $\overrightarrow{f}$ to $\overrightarrow{g}$ respectively, then we define a composition as follows:
	$$l(e,u,f)\: l(f,v,g) = r(e,vu,g).$$
	
	We define $\overline{\mathcal{R}_Q}$ as the free category generated by the quiver $\mathcal{R}_Q$ under the above composition. Observe that for any morphism $l(e,x,f)$ in $\overline{\mathcal{R}_Q}$, $x\in fSe$ and conversely, given any element $x\in fSe$, we can see that
	\begin{equation}\label{normfr}
	l(e,x,f) = l(e,g,g)l(g,x,h)l(h,h,f)
	\end{equation}
	where $h\in E(R_x)$ and $g\in E(L_x)\cap \omega(e)$. Then $l(e,g,g)\in \mathcal{Q}_R$, $l(g,x,h)\in \mathcal{G}_R$ and $l(h,h,g) \in \mathcal{P}_R$; hence $l(e,x,f)\in \overline{\mathcal{R}_Q}$.
	
	Further we can define an equivalence relation $\sim$ for any two morphisms (by abuse of notation) $l(e,u,f)$ and $l(g,v,h)$ in $\overline{\mathcal{R}_Q}$ as 
	$$l(e,u,f)\sim l(g,v,h) \iff e\mathscr{R} g, f\mathscr{R} h, u=ve.$$ 
	Now we define the $\sim$-class of a morphism $l(e,u,f)$ in $\overline{\mathcal{R}_Q}$ as a morphism in the category $\mathcal{R}_G$. We shall denote a morphism in $\mathcal{R}_G$ by $l(e,u,f)$ itself. Then $\mathcal{R}_G$ forms a category with $v\mathcal{R}_G=E/\mathscr{R}$. This leads to the following dual theorem:
	\begin{thm}
		$(\mathcal{R}_G,\mathcal{P}_R)$ is a normal category such that a principal cone $l^x$ is given by, for each $\overrightarrow{g}\in v\mathcal{R}_G$,
		$$l^x(\overrightarrow{g})=l(g,xg,f) \text{ where }f\in E(R_x).$$ 
	\end{thm}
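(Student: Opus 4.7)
The plan is to mirror, step by step, the construction already carried out for $\mathcal{L}_G$, with systematic left--right reversals reflecting the twisted composition law $l(e,u,f)\,l(f,v,g)=l(e,vu,g)$ in $\mathcal{R}_Q$. Since every piece of the $\mathcal{L}_G$ argument has a natural dual, no essentially new idea is required; the task is mostly to assemble those duals in the correct order.

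First, I would verify that $(\mathcal{R}_G,\mathcal{P}_R)$ is a category with subobjects, exactly as in Lemma \ref{lemcso}. The argument that a morphism $l(e,e,f)\in\mathcal{P}_R$ is a monomorphism in $\mathcal{R}_G$ is the same cancellation computation as before, using $e \, e = e$ and the equality criterion for morphisms in $\mathcal{R}_G$; and the argument that any factorization $l(e,e,f)=l(h,h,k)\,l(g,u,h)$ through an element of $\mathcal{P}_R$ forces $l(g,u,h)\in\mathcal{P}_R$ is identical modulo the reversal. Next, I would check that every inclusion $j_\mathcal{R}(e,f)=l(e,e,f)$ splits, with right inverse $l(f,ef,e)\in\mathcal{Q}_R$; the computation $l(f,ef,e)\,l(e,e,f)=l(f,efe,f)=l(f,f,f)$ hmm wait -- using the twisted composition, $l(e,e,f)\,l(f,ef,e)=l(e,efe,e)=l(e,e,e)$. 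This identifies the morphisms in $\mathcal{Q}_R$ as the retractions of $\mathcal{R}_G$, and formula (\ref{normfr}) immediately provides a normal factorization of an arbitrary morphism, establishing (NC1) and (NC2).

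The substantive part is the construction of the principal cones $l^x$ and the verification of (NC3). For a morphism $(x,x')\in\mathcal{G}(S)$ from $e$ to $f$ and any $g\in E$, I pick a sandwich element $h\in \mathcal{S}(e,g)$ (note the reversed order compared with the $\mathcal{L}_G$ case), so that $h\omega^l e$ and $eh\omega e$. I then glue together the retraction $l(g,hg,h)$ coming from $\psi^g_h$, the isomorphism $l(h,eh,eh)$ coming from the evaluation of the $E$-chain $(h,eh)$, the isomorphism $l(eh,xeh,k)$ with $k=xehx'$ coming from the corestriction $(x,x'){\downharpoonright} eh=(xeh,ehx')$, and the inclusion $l(k,k,f)$ arising from $k\omega f$. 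Composing in $\mathcal{R}_G$ under the twisted law collapses (by calculations entirely parallel to the $\mathcal{L}_G$ case, and in particular independent of the choice of $h$) to $l(g,xg,f)$, so $l^x(\overrightarrow{g}):=l(g,xg,f)$ is well defined. Conditions (Ncone1) and (Ncone2) are immediate from the formula: if $\overrightarrow{g}'\leq_R \overrightarrow{g}$, then in $\mathcal{R}_G$ one has $l^x(\overrightarrow{g})\,l(g',g',g)=l(g',xg',f)=l^x(\overrightarrow{g}')$; and (Ncone3) holds because $l^x(\overrightarrow{e})=l(e,xe,f)=l(e,x,f)$ is an isomorphism in $\mathcal{G}_R\subseteq\mathcal{R}_G$. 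Taking $x=e$ an idempotent yields an idempotent cone $l^e$ with apex $\overrightarrow{e}$ and $l^e(\overrightarrow{e})=1_{\overrightarrow{e}}$, verifying (NC3).

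The only real source of difficulty is bookkeeping: one has to keep straight the reversal in composition order in $\mathcal{R}_Q$, the corresponding switch from $\mathcal{S}(g,e)$ to $\mathcal{S}(e,g)$ when choosing sandwich elements, and the fact that it is now the ``right part'' $ehx'$ rather than the ``left part'' of a restriction/corestriction of $(x,x')$ in $\mathcal{G}(S)$ that is being recorded in the isomorphism component. Once these conventions are fixed, every verification reduces to a dual of a computation already performed for $\mathcal{L}_G$, and the conclusion follows.
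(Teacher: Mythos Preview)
Your approach is exactly the paper's: the theorem is stated there as ``the following dual theorem'' with no separate proof, and your plan to systematically dualize the $\mathcal{L}_G$ argument is precisely what is intended.

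That said, the bookkeeping you flagged as the main risk has in fact gone wrong in a couple of places. If $(x,x')\in\mathcal{G}(S)$ goes from $e$ to $f$, then $e=xx'\in E(R_x)$ and $f=x'x\in E(L_x)$; since the theorem asks for the apex of $l^x$ to lie in $E(R_x)$, the apex is $\overrightarrow{e}$, not $\overrightarrow{f}$. Correspondingly, the isomorphism component sits at $\overrightarrow{f}$ (where $xf=x$), so your (Ncone3) line should read $l^x(\overrightarrow{f})=l(f,xf,e)=l(f,x,e)$, not $l^x(\overrightarrow{e})$. The same swap affects your choice of restriction versus corestriction: with $eh\omega e=\mathbf{d}(x,x')$ you want the \emph{restriction} $eh{\downharpoonleft}(x,x')=(ehx,x'eh)$, not the corestriction. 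Finally, in your (Ncone2) check the inclusion must come first: $l(g',g',g)\,l^x(\overrightarrow{g})=l(g',xg\cdot g',e)=l(g',xg',e)$; the order you wrote is not composable. Once these labels are straightened out, every computation goes through exactly as in the $\mathcal{L}_G$ case.
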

	
	Having constructed two normal categories $\mathcal{L}_G$ and $\mathcal{R}_G$, now we need to define a cross-connection $\Gamma_G$ between these categories, and then establish the equivalence. For convenience, we take an alternate route. We first show that the categories $\mathcal{L}_G$ and $\mathcal{R}_G$ are normal category isomorphic to the categories $\mathcal{L}_S$ and $\mathcal{R}_S$ of the semigroup $S$, respectively. Then we shall use Nambooripad's construction of ${\Gamma_S}$ in \cite{cross} using $\mathcal{L}_S$ and $\mathcal{R}_S$ of the semigroup $S$, to build our cross-connection $\Gamma_G$. This is justified since the construction is lengthy and the exact imitation of the construction suffices, once we establish normal category isomorphisms.
	
	So, define a functor $\mathfrak{L} \colon \mathcal{L}_G \to \mathcal{L}_S$ as follows:
	\begin{equation}\label{l}
	v\mathfrak{L} (\overleftarrow{e})=Se\text{ and } \mathfrak{L} (r(e,u,f)) = \rho(e,u,f).
	\end{equation}
	\begin{pro}
		$\mathfrak{L} $ is a normal category isomorphism.
	\end{pro}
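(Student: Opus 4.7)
The plan is to verify that $\mathfrak{L}$ is a well-defined, $v$-bijective, fully faithful, inclusion-preserving functor; since these are essentially the requirements for a normal category isomorphism, this will suffice. The pleasant feature to exploit is that the equivalence $\sim$ used to define morphisms of $\mathcal{L}_G$ is literally the same as the equality relation between morphisms in $\mathcal{L}_S$ stated in Section \ref{cxnrs}: namely $\rho(e,u,f)=\rho(g,v,h)$ in $\mathcal{L}_S$ iff $e\mathscr{L}g$, $f\mathscr{L}h$, $u\in eSf$, $v\in gSh$ and $u=ev$. So the comparison is really just a matter of re-labeling.

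First I would verify well-definedness and functoriality. For well-definedness on objects, $\overleftarrow{e}=\overleftarrow{f}$ means $e\mathscr{L}f$, which is equivalent to $Se=Sf$, so $v\mathfrak{L}$ is an honest (and injective) function onto $v\mathcal{L}_S$. Since every principal left ideal of $S$ has the form $Se$ for some $e\in E$, the object map is surjective, hence bijective. For well-definedness on morphisms, if $r(e,u,f)\sim r(g,v,h)$ in $\overline{\mathcal{L}_Q}$, then by definition $e\mathscr{L}g$, $f\mathscr{L}h$, $u=ev$, which is exactly the condition for $\rho(e,u,f)=\rho(g,v,h)$ in $\mathcal{L}_S$. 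Functoriality (preservation of composition and identities) is immediate, because composition on both sides is given by the semigroup product $(u,v)\mapsto uv$, and the identity at $\overleftarrow{e}$ (respectively $Se$) is $r(e,e,e)$ (respectively $\rho(e,e,e)$).

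Next I would verify fully-faithfulness. We established above that morphisms $\overleftarrow{e}\to\overleftarrow{f}$ in $\mathcal{L}_G$ are in bijection with $eSf$ via $r(e,u,f)\leftrightarrow u$; the analogous bijection $\rho(e,u,f)\leftrightarrow u$ holds for $\mathcal{L}_S(Se,Sf)$ from Nambooripad's description. The functor $\mathfrak{L}$ takes the first parametrization to the second by the identity on the parameter $u$, so it is bijective on each hom-set.

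Finally, for the normal-category structure, I would check that $\mathfrak{L}$ preserves inclusions. A morphism in $\mathcal{P}_L$ has the form $r(e,e,f)=j_\mathcal{L}(e,f)$ with $\overleftarrow{e}\le_L\overleftarrow{f}$, i.e.\ $Se\subseteq Sf$. Its image $\rho(e,e,f)$ is the partial right translation $x\mapsto xe=x$ on $Se$, which is precisely the inclusion morphism $Se\hookrightarrow Sf$ in $\mathcal{L}_S$. Conversely every inclusion of $\mathcal{L}_S$ arises in this way, so $\mathfrak{L}$ maps $\mathcal{P}_L$ bijectively onto the inclusion subcategory of $\mathcal{L}_S$. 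Combined with $v$-bijectivity and fully-faithfulness, this gives a normal category isomorphism. I do not anticipate a genuine obstacle; the only mild nuisance is bookkeeping between the two presentations of morphisms as equivalence classes, but once it is observed that both use exactly the same equivalence relation the verification is mechanical.
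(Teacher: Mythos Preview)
Your proposal is correct and follows essentially the same route as the paper's proof: both verify well-definedness on objects and morphisms by observing that the $\sim$-relation on $\overline{\mathcal{L}_Q}$ coincides with the equality criterion for morphisms in $\mathcal{L}_S$, then check functoriality, inclusion preservation, $v$-bijectivity, and fully-faithfulness. Your treatment of fully-faithfulness via the explicit bijection with $eSf$ is slightly more detailed than the paper's, which simply asserts fullness and obtains faithfulness by ``retracing the steps'' of the well-definedness argument, but the underlying idea is identical.
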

	\begin{proof}
		First, observe $\overleftarrow{e}=\overleftarrow{f}$ if and only if $e\mathscr{L}f$ if and only if $Se=Sf$. Hence $v\mathfrak{L} $ is well-defined.
		
		Since $r(e,u,f)=r(g,v,h)$ if and only if $e\mathscr{L}g$, $f\mathscr{L}h$, $u\in eSf$, $v\in gSh$ and $u=ev$, if and only if $\rho(e,u,f)=\rho(g,v,h)$ (by Section \ref{cxnrs}); $\mathfrak{L} $ is well-defined. 
		
		\begin{equation*}
		\begin{split}
		\mathfrak{L} (r(e,u,f)r(f,v,g))&=\mathfrak{L} (r(e,uv,g))\\
		&=\rho(e,uv,g)\\
		&=\rho(e,u,f)\rho(f,v,g)\\
		&=\mathfrak{L} (r(e,u,f)\mathfrak{L} (r(f,v,g)).
		\end{split}
		\end{equation*}
		Also since $\mathfrak{L} (r(e,e,e))=\rho(e,e,e)$, $\lambda$ is functor.
		
		Given an inclusion $r(e,e,f)\in\mathcal{L}_G$ from $\overleftarrow{e}$ to $\overleftarrow{f}$, then $\mathfrak{L} (r(e,e,f))=\rho(e,e,f)$ is an inclusion in $\mathcal{L}_S$ from $Se$ to $Sf$. So $\mathfrak{L} $ is inclusion preserving.
		
		The functor $\mathfrak{L} $ is clearly full and $v$-full. Retracing the steps in the proof of well-definedness of $\mathfrak{L} $ shows that $\mathfrak{L} $ is an order isomorphism, $v$-injective and faithful. Hence $\mathcal{L}_G$ and $\mathcal{L}_S$ are isomorphic as normal categories.
	\end{proof}
	
	Dually, $\mathcal{R}_G$ is isomorphic to $\mathcal{R}_S$ via the functor $\mathfrak{R} \colon \mathcal{R}_G \to \mathcal{R}_S$ defined as follows:
	\begin{equation}\label{r}
	v\mathfrak{R} (\overrightarrow{e})=eS\text{ and } \mathfrak{R} (l(e,u,f)) = \lambda(e,u,f).
	\end{equation}
	
	Now, we proceed to construct the required cross-connection $\Gamma_G$. Define a functor $\Gamma_G\colon  \mathcal{R}_G \to N^*\mathcal{L}_G$ as follows:
	\begin{equation}
	v\Gamma_G(\overrightarrow{e}) = H(r^e;-) \text{ and } \Gamma_G(l(e,u,f)) = \eta_{r^e}\mathcal{L}_G(r(f,u,e),-)\eta_{r^f}^{-1}
	\end{equation}
	where $r^e(\overleftarrow{g})=r(g,ge,e) \in \mathcal{L}_G$ and $\eta_{r^e}$ is the natural isomorphism between the $H$-functor $H(r^e;-)$ and the hom-functor $\mathcal{L}_G(\overleftarrow{e},-)$. The $H$-functor $H(r^e;-)\in vN^*\mathcal{L}_G$ is such that $H(r^e;-)\colon \mathcal{L}_G\to \mathbf{Set}$ is determined by the principal cone $r^e\in T\mathcal{L}_G$. The covariant hom-functor $\mathcal{L}_G(\overleftarrow{e},-)\colon \mathcal{L}_G\to \mathbf{Set}$ is the hom-functor determined by the object $\overleftarrow{e}\in v\mathcal{L}_G$. The natural transformation $\Gamma_G(l(e,u,f))$ may be described by the following commutative diagram:
	\begin{equation*}\label{eta}
	\xymatrixcolsep{2pc}\xymatrixrowsep{4pc}\xymatrix
	{
		H(r^e;-) \ar[rr]^{\eta_{r^e}} \ar[d]_{\Gamma_G(l(e,u,f))}  
		&& \mathcal{L}_G(\overleftarrow{e},-) \ar[d]_{\mathcal{L}(r(f,u,e),-)} &\overleftarrow{e}\\       
		H(r^f;-) \ar[rr]^{\eta_{r^f}} && \mathcal{L}_G(\overleftarrow{f},-)&\overleftarrow{f}\ar[u]_{r(f,u,e)} 
	}
	\end{equation*}
	
	Now, we proceed to show that $\Gamma_G$ is a local isomorphism. But for that end, we shall need the following theorem which relates the normal category $\mathcal{R}_{T\mathcal{C}}$ of principal right ideals of the regular semigroup ${T\mathcal{C}}$, with the normal dual $N^\ast\mathcal{C}$ of a normal category $\mathcal{C}$.
	\begin{thm}\cite[Theorem III.25]{cross}\label{nd}
		The category $\mathcal{R}_{T\mathcal{C}}$ is normal category isomorphic to the normal dual $N^\ast\mathcal{C}$. 
	\end{thm}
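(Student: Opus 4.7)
The plan is to exhibit an explicit functor $F\colon \mathcal{R}_{T\mathcal{C}} \to N^\ast\mathcal{C}$, establish that it is $v$-bijective and fully faithful, and then verify that it preserves the normal category structure (inclusions and ideals). On objects, every principal right ideal of $T\mathcal{C}$ can be generated by an idempotent cone $\epsilon$ (one with $\epsilon(c_\epsilon)=1_{c_\epsilon}$), so I would set $F(\epsilon T\mathcal{C}) = H(\epsilon;-)$. Well-definedness reduces to showing that $\epsilon \mathscr{R} \epsilon'$ in $T\mathcal{C}$ implies $H(\epsilon;-) = H(\epsilon';-)$, which one obtains by comparing $M$-sets (invariants of $\mathscr{R}$-classes of cones) and exploiting the representable nature of $H$-functors via the natural isomorphism $\eta_\epsilon\colon H(\epsilon;-)\to\mathcal{C}(c_\epsilon,-)$.

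For $v$-surjectivity I would note that every object of $N^\ast\mathcal{C}$ is, by definition, of the form $H(\gamma;-)$ for some $\gamma\in T\mathcal{C}$, and since $T\mathcal{C}$ is a regular semigroup, $\gamma$ is $\mathscr{R}$-related to an idempotent $\epsilon$; then $H(\gamma;-)=H(\epsilon;-) = F(\epsilon T\mathcal{C})$.

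For morphisms, a morphism $\lambda(\epsilon,\sigma,\epsilon')$ in $\mathcal{R}_{T\mathcal{C}}(\epsilon T\mathcal{C}, \epsilon' T\mathcal{C})$ is determined by a normal cone $\sigma\in\epsilon'T\mathcal{C}\,\epsilon$. Applying Yoneda's lemma (via the natural isomorphisms $\eta_\epsilon$ and $\eta_{\epsilon'}$), I would identify $N^\ast\mathcal{C}(H(\epsilon;-), H(\epsilon';-))$ with $\mathcal{C}(c_{\epsilon'},c_\epsilon)$. The component $\sigma(c_{\epsilon'})^\circ$ delivers precisely a morphism in $\mathcal{C}(c_{\epsilon'},c_\epsilon)$, and I would define
\begin{equation*}
F(\lambda(\epsilon,\sigma,\epsilon')) \;=\; \eta_\epsilon\,\mathcal{C}(\sigma(c_{\epsilon'})^\circ,-)\,\eta_{\epsilon'}^{-1},
\end{equation*}
mirroring the shape of $\Gamma_S$ in Theorem~\ref{thmcxn}.

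The main obstacle will be matching compositions: composition in $\mathcal{R}_{T\mathcal{C}}$ is built from left multiplication in the semigroup $T\mathcal{C}$, which (via equation~(\ref{eqnsg1})) involves extracting epimorphic components of cone components, whereas composition in $N^\ast\mathcal{C}$ is ordinary vertical pasting of natural transformations. Reconciling these requires showing that the assignment $\sigma \mapsto \sigma(c_{\epsilon'})^\circ$ intertwines the cone composition $\sigma\cdot\tau$ with composition in $\mathcal{C}$, which is essentially a computation unwinding the definition of $\gamma \ast f^\circ$ and using functoriality of $\eta$. Faithfulness then follows from the bijection $T\mathcal{C}\,\cap\,\epsilon'T\mathcal{C}\epsilon \leftrightarrow \mathcal{C}(c_{\epsilon'},c_\epsilon)$ induced by Yoneda; fullness is the reverse direction of the same correspondence. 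Once $F$ is shown fully faithful and $v$-bijective, inclusion preservation is a direct check: the inclusion $\epsilon T\mathcal{C}\subseteq \epsilon' T\mathcal{C}$ (equivalently $\epsilon\,\omega^r\,\epsilon'$) corresponds under $F$ to the natural transformation $H(\epsilon;-)\hookrightarrow H(\epsilon';-)$ induced by the inclusion $c_\epsilon \subseteq c_{\epsilon'}$, and this is the canonical inclusion in $N^\ast\mathcal{C}$. Since $F$ is bijective on objects and fully faithful, it is in particular an isomorphism on every ideal, yielding the desired normal category isomorphism.
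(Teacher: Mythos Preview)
The paper does not actually prove this theorem: it is quoted verbatim as \cite[Theorem III.25]{cross} and used as a black box in the subsequent proposition, so there is no ``paper's own proof'' against which to compare your attempt. Your outline is essentially the standard argument from Nambooripad's treatise---assigning $H(\epsilon;-)$ to $\epsilon\,T\mathcal{C}$ and transporting morphisms through the Yoneda isomorphisms $\eta_\epsilon$---and the overall shape is correct.

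One technical point to tighten: for $\sigma\in\epsilon'(T\mathcal{C})\epsilon$ the apex $c_\sigma$ need only satisfy $c_\sigma\subseteq c_\epsilon$, so the component $\sigma(c_{\epsilon'})$ lands in $\mathcal{C}(c_{\epsilon'},c_\sigma)$, not in $\mathcal{C}(c_{\epsilon'},c_\epsilon)$. Taking the epimorphic part $\sigma(c_{\epsilon'})^\circ$ moves the codomain in the wrong direction (to a subobject of $c_\sigma$); what you actually want is the composite $\sigma(c_{\epsilon'})\,j(c_\sigma,c_\epsilon)\in\mathcal{C}(c_{\epsilon'},c_\epsilon)$. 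With that correction the Yoneda transport $\eta_\epsilon\,\mathcal{C}(-,-)\,\eta_{\epsilon'}^{-1}$ goes through, and the rest of your plan (functoriality via unwinding~(\ref{eqnsg1}), fullness/faithfulness from the hom-set bijection, inclusion preservation from $\epsilon\,\omega^r\,\epsilon'\Leftrightarrow c_\epsilon\subseteq c_{\epsilon'}$) is sound.
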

	\begin{pro}
		The functor $\Gamma_G$ is a local isomorphism.
	\end{pro}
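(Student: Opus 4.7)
My plan is to reduce the statement to Nambooripad's Theorem \ref{thmcxn}, which already asserts that $\Gamma_S\colon\mathcal{R}_S\to N^\ast\mathcal{L}_S$ is a local isomorphism. The two normal-category isomorphisms $\mathfrak{L}\colon\mathcal{L}_G\to\mathcal{L}_S$ and $\mathfrak{R}\colon\mathcal{R}_G\to\mathcal{R}_S$ constructed in the preceding propositions provide the bridge, and I only need to transport $\Gamma_S$ across them.

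First, I would observe that any normal-category isomorphism $F\colon\mathcal{C}\to\mathcal{C}'$ induces an isomorphism $F_\ast\colon N^\ast\mathcal{C}\to N^\ast\mathcal{C}'$ of normal duals: on objects it sends the $H$-functor $H(\gamma;-)$ to the $H$-functor of the image cone $F(\gamma)\in T\mathcal{C}'$ (where $F(\gamma)(F(c))=F(\gamma(c))$), and on morphisms it conjugates natural transformations by the Yoneda isomorphisms $\eta$. Applied to $\mathfrak{L}$ this gives $\mathfrak{L}_\ast\colon N^\ast\mathcal{L}_G\to N^\ast\mathcal{L}_S$ with $\mathfrak{L}_\ast(H(r^e;-))=H(\rho^e;-)$, because the defining components $r(g,ge,e)$ of $r^e$ are sent by $\mathfrak{L}$ to $\rho(g,ge,e)$, which are precisely the components of $\rho^e$.

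Next I would verify that the diagram
\begin{equation*}
\xymatrixcolsep{3pc}\xymatrixrowsep{2.5pc}\xymatrix{
\mathcal{R}_G \ar[r]^{\Gamma_G}\ar[d]_{\mathfrak{R}} & N^\ast\mathcal{L}_G \ar[d]^{\mathfrak{L}_\ast} \\
\mathcal{R}_S \ar[r]^{\Gamma_S} & N^\ast\mathcal{L}_S
}
\end{equation*}
commutes. On objects this is immediate: both composites send $\overrightarrow{e}$ to $H(\rho^e;-)$. On morphisms $l(e,u,f)\in\mathcal{R}_G$, the two definitions produce natural transformations of the form $\eta\,\mathcal{L}(r(f,u,e),-)\,\eta^{-1}$ and $\eta\,\mathcal{L}_S(\rho(f,u,e),-)\,\eta^{-1}$; since $\mathfrak{L}$ is a fully faithful inclusion-preserving isomorphism, it identifies these hom-functor pullbacks, and since $\mathfrak{L}_\ast$ is defined by conjugating with the same $\eta$'s, the two paths agree. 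This amounts to a small Yoneda diagram chase.

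Once the square commutes and both vertical functors are isomorphisms of normal categories, local-isomorphism status transfers from bottom to top: inclusion preservation, full faithfulness, and the restriction-to-ideal bijectivity $\Gamma_G|_{\langle \overrightarrow{e}\rangle}\to\langle H(r^e;-)\rangle$ are all obtained by pulling back the corresponding properties of $\Gamma_S$ through $\mathfrak{R}$ and $\mathfrak{L}_\ast$. The additional requirement from Definition \ref{ccxn} (that every object of $\mathcal{L}_G$ lies in $M\Gamma_G(\overrightarrow{e})$ for some $\overrightarrow{e}$) likewise transports via $\mathfrak{L}$ and $\mathfrak{R}$ from the known cross-connection $\Gamma_S$. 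The only real obstacle I anticipate is the bookkeeping in the Yoneda identification $\mathfrak{L}_\ast(\eta_{r^e})=\eta_{\rho^e}$ up to canonical isomorphism; this is routine but demands care because the $\eta$'s are defined in terms of the apex data and must be tracked explicitly across $\mathfrak{L}$.
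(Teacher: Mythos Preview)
Your proposal is correct but follows a different route from the paper's proof. The paper does not transport $\Gamma_S$ along the isomorphisms $\mathfrak{L}$ and $\mathfrak{R}$; instead it factors $\Gamma_G$ intrinsically as $\Gamma_G=\bar{F}\bar{G}$, where $\bar{G}\colon\mathcal{R}_{T\mathcal{L}_G}\to N^\ast\mathcal{L}_G$ is the normal-category isomorphism supplied by Theorem~\ref{nd} (applied with $\mathcal{C}=\mathcal{L}_G$), and $\bar{F}\colon\mathcal{R}_G\to\mathcal{R}_{T\mathcal{L}_G}$ is given by $\overrightarrow{e}\mapsto r^e(T\mathcal{L}_G)$ and $l(e,u,f)\mapsto\lambda(r^e,r^u,r^f)$. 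The paper then argues that $\bar{F}$ is a local isomorphism by \emph{imitating} the proof of \cite[Proposition~IV.1]{cross} (the analogue for $\Gamma_S$), rather than by invoking that result through a commuting square. Your approach reuses the already-constructed isomorphisms $\mathfrak{L},\mathfrak{R}$ and reduces everything to the known fact about $\Gamma_S$, which is conceptually economical but requires you to set up the induced dual isomorphism $\mathfrak{L}_\ast$ and check the Yoneda bookkeeping; the paper's approach avoids that bookkeeping at the cost of passing through the auxiliary category $\mathcal{R}_{T\mathcal{L}_G}$ and re-running a proof from \cite{cross}. One minor remark: your final sentence about the $M$-set condition from Definition~\ref{ccxn} belongs to the subsequent theorem (that $\Gamma_G$ is a cross-connection), not to the present proposition, which is purely about the local-isomorphism property.
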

	\begin{proof}
		We shall prove this proposition by appealing to the proof of \cite[Proposition IV.1]{cross} and \cite[Theorem IV.2]{cross} which shows that $\Gamma_S$ is a local isomorphism. 
		
		First, since $\mathcal{L}_G$ is a normal category, by Theorem \ref{nd} the normal category $\mathcal{R}_{T\mathcal{L}_G}$ arising from the principal right ideals of $T\mathcal{L}_G$ is isomorphic to the normal dual $N^\ast\mathcal{L}_G$, say, via the functor $\bar{G}$. So, all we have to show in order to prove the proposition is that there is a local isomorphism from $\mathcal{R}_{G}$ to $\mathcal{R}_{T\mathcal{L}_G}$. Define a functor 
		$\bar{F}\colon \mathcal{R}_{G} \to \mathcal{R}_{T\mathcal{L}_G}$ as follows:
		\begin{equation*}
		v\bar{F}(\overrightarrow{e}) = r^e(T\mathcal{L}_G) \quad\text{ and }\quad \Gamma_G(l(e,u,f)) = \lambda(r^e,r^u,r^f).
		\end{equation*}
		Imitating the proof of \cite[Proposition IV.1]{cross}, we can show that $\bar{F}$ (denoted by $FS_\rho$ in \cite{cross}) is a local isomorphism. Hence $\bar{F}\bar{G}\colon \mathcal{R}_G \to N^*\mathcal{L}_G$ is a local isomorphism. Since $\Gamma_G=\bar{F}\bar{G}$ (read $\Gamma_S=FS_\rho \circ\bar{G}$ in \cite{cross}), the functor $\Gamma_G$ is a local isomorphism.
	\end{proof}
	\begin{thm}
		$\Gamma_G$ is a cross-connection.
	\end{thm}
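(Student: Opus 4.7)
The plan is to verify the two parts of Definition \ref{ccxn}: first, that $\Gamma_G \colon \mathcal{R}_G \to N^*\mathcal{L}_G$ is a local isomorphism, and second, that for every $\overleftarrow{e} \in v\mathcal{L}_G$ there exists $\overrightarrow{f} \in v\mathcal{R}_G$ with $\overleftarrow{e} \in M\Gamma_G(\overrightarrow{f})$. The first part has just been established in the preceding proposition (via the factorisation $\Gamma_G = \bar F \bar G$ with Theorem \ref{nd}), so only the second part needs attention.

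For the second part, recall from Section \ref{cxns} that the $M$-set of the $H$-functor $H(r^f;-)$ equals $Mr^f$, the set of $\overleftarrow{g}$ at which the principal cone $r^f$ is an isomorphism. Given $\overleftarrow{e} \in v\mathcal{L}_G$, I would choose $\overrightarrow{f} := \overrightarrow{e}$ and compute directly from $r^e(\overleftarrow{g}) = r(g,ge,e)$ that
\[ r^e(\overleftarrow{e}) \;=\; r(e,\,e\!\cdot\!e,\,e) \;=\; r(e,e,e) \;=\; 1_{\overleftarrow{e}}, \]
which is an isomorphism in $\mathcal{L}_G$. Hence $\overleftarrow{e} \in Mr^e = M\Gamma_G(\overrightarrow{e})$, and the axiom holds. (Equivalently, we already observed that $r^e$ is idempotent in $T\mathcal{L}_G$, and any idempotent cone $\gamma$ satisfies $\gamma(c_\gamma)=1_{c_\gamma}$, placing $c_\gamma = \overleftarrow{e}$ into $Mr^e$ automatically.)

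No real obstacle is anticipated, since the analytic work has been loaded into the normal-category constructions of $\mathcal{L}_G$ and $\mathcal{R}_G$, the isomorphisms $\mathfrak{L}$ and $\mathfrak{R}$ of (\ref{l}) and (\ref{r}), and the local-isomorphism proposition. A conceptually shorter route, worth mentioning as a sanity check, is to transport $\Gamma_S$ of Theorem \ref{thmcxn} along $\mathfrak{L}$ and $\mathfrak{R}$: since the $M$-set condition and the local isomorphism property are both preserved by normal category isomorphisms, and $\Gamma_G$ is by design the transport of $\Gamma_S$, the theorem follows immediately. I prefer the direct verification above only because it stays inside $\mathcal{L}_G$ and avoids re-invoking the isomorphism data at this final step.
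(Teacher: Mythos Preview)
Your proposal is correct and matches the paper's proof essentially line for line: the paper also invokes the preceding proposition for the local isomorphism and then verifies the $M$-set condition by observing that $\overleftarrow{e}\in Mr^e = M\Gamma_G(\overrightarrow{e})$ for each $\overleftarrow{e}\in v\mathcal{L}_G$. You simply add the explicit computation $r^e(\overleftarrow{e})=r(e,e,e)=1_{\overleftarrow{e}}$ to justify that membership, which the paper leaves implicit.
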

	\begin{proof}
		By the above proposition, $\Gamma_G$ is a local isomorphism. Also, since $\overleftarrow{e}\in Mr^e$, for every $\overleftarrow{e} \in v\mathcal{L}_G$, we have $\overrightarrow{e} \in v\mathcal{R}_G$ such that $\overleftarrow{e} \in M\Gamma_G(\overrightarrow{e})=Mr^e$. Hence $\Gamma_G$ is a cross-connection.
	\end{proof}
	
	Dually, we can show that $(\mathcal{L}_G,\mathcal{R}_G;\Delta_G)$ defined by the functor $\Delta_G\colon  \mathcal{L}_G \to N^*\mathcal{R}_G$ as follows is a cross-connection.
	\begin{equation}
	v\Delta_G(\overleftarrow{e}) = H(l^e;-) \text{ and }\Delta_G(r(e,u,f)) = \eta_{l^e}\mathcal{R}_G(l(f,u,e),-)\eta_{l^f}^{-1}.
	\end{equation}
	As in the case of ${\Gamma_S}$ in \cite{cross}, $\Gamma_G$ gives rise to the cross-connection semigroup
	$$\mathbb{S}\Gamma_G= (\mathcal{R}_G,\mathcal{L}_G;\Gamma_G)=\:\{\: (r^a,l^a) : a\in S \} $$
	so that the set of idempotents $E_{\Gamma_G}$ of the semigroup $\mathbb{S}\Gamma_G$ is given by the set
	$$E_{\Gamma_G}=\{(\overleftarrow{e},\overrightarrow{e}) : e\in E(S)\}.$$
	Here the element $(\overleftarrow{e},\overrightarrow{e})$ denotes the pair of normal cones $(\gamma(\overleftarrow{e},\overrightarrow{e}),\delta(\overleftarrow{e},\overrightarrow{e}))= (r^e,l^e) \in \mathbb{S}{\Gamma_G}$. Further if we define the partial orders $\omega^l$ and $\omega^r$ as follows:
	\begin{equation}\label{eqpog}
	(\overleftarrow{e},\overrightarrow{e})\omega^l(\overleftarrow{f},\overrightarrow{f}) \iff \overleftarrow{e}\leq_L\overleftarrow{f},\text{ and } (\overleftarrow{e},\overrightarrow{e})\omega^r(\overleftarrow{f},\overrightarrow{f}) \iff \overrightarrow{e}\leq_R\overrightarrow{f},
	\end{equation}
	then $E_{\Gamma_G}$ forms a regular biordered set and it is biorder isomorphic to the biordered set $E$ of the inductive groupoid $\mathcal{G}(S)$ (also see \cite{bicxn}).
	
	Also, we can verify that if $(\overleftarrow{e},\overrightarrow{e}),(\overleftarrow{f},\overrightarrow{f})\in E_{\Gamma_G}$, then the transpose of $r(e,u,f)\in \mathcal{L}_G(\overleftarrow{e},\overleftarrow{f})$ is the morphism $l(f,u,e)\in \mathcal{R}_G(\overrightarrow{f},\overrightarrow{e})$. 
	
	Now, we need to show that the cross-connection $(\mathcal{R}_{G},\mathcal{L}_{G},\Gamma_{G})$ of the inductive groupoid $\mathcal{G}(S)$ is cross-connection isomorphic to cross-connection $(\mathcal{R}_S,\mathcal{L}_S,\Gamma_S)$ of the semigroup $S$. 
	\begin{thm}
		$(\mathcal{R}_{G},\mathcal{L}_{G},\Gamma_{G})$ is isomorphic to $(\mathcal{R}_S,\mathcal{L}_S,\Gamma_S)$ as cross-connections.
	\end{thm}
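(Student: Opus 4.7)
The natural candidates for the cross-connection morphism are the normal category isomorphisms $\mathfrak{L}\colon\mathcal{L}_G\to\mathcal{L}_S$ and $\mathfrak{R}\colon\mathcal{R}_G\to\mathcal{R}_S$ constructed in \eqref{l} and \eqref{r}. My plan is to show that $m=(\mathfrak{L},\mathfrak{R})$ is a morphism of cross-connections from $\Gamma_G$ to $\Gamma_S$ in the sense of Definition \ref{mor}, and that it is an isomorphism because each of $\mathfrak{L}$ and $\mathfrak{R}$ already is one.

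First I would establish the biordered set compatibility. Under the biorder isomorphism $E_{\Gamma_G}\to E$ given by $(\overleftarrow{e},\overrightarrow{e})\mapsto e$ (coming from \eqref{eqpog}) and the biorder isomorphism $E\to E_{\Gamma_S}$ given by $e\mapsto(Se,eS)$ (coming from \eqref{eqpo}), the composite matches exactly $(v\mathfrak{L},v\mathfrak{R})$. Hence $(c,d)\in E_{\Gamma_G}$ if and only if $(\mathfrak{L}(c),\mathfrak{R}(d))\in E_{\Gamma_S}$, and both $\mathfrak{L}$ and $\mathfrak{R}$ are inclusion-preserving by the preceding propositions.

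Next I would verify (M1). The idempotent cone $\gamma(\overleftarrow{e},\overrightarrow{e})\in T\mathcal{L}_G$ is precisely the principal cone $r^e$ whose component at $\overleftarrow{g}$ is $r(g,ge,e)$, while $\gamma(Se,eS)\in T\mathcal{L}_S$ is the principal cone $\rho^e$ with component $\rho(g,ge,e)$ at $Sg$. Applying $\mathfrak{L}$ componentwise gives
\begin{equation*}
\mathfrak{L}(r^e(\overleftarrow{g}))=\mathfrak{L}(r(g,ge,e))=\rho(g,ge,e)=\rho^e(Sg),
\end{equation*}
so $\mathfrak{L}(\gamma(c,d)(c'))=\gamma(\mathfrak{L}(c),\mathfrak{R}(d))(\mathfrak{L}(c'))$ for every $(c,d)\in E_{\Gamma_G}$ and $c'\in v\mathcal{L}_G$, which is (M1). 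Then for (M2) I would recall from the construction of $\Gamma_G$ that the transpose of $r(e,u,f)\in\mathcal{L}_G(\overleftarrow{e},\overleftarrow{f})$ is $l(f,u,e)\in\mathcal{R}_G(\overrightarrow{f},\overrightarrow{e})$, and by Theorem~\ref{thmcxn} the transpose of $\rho(e,u,f)\in\mathcal{L}_S(Se,Sf)$ is $\lambda(f,u,e)\in\mathcal{R}_S(fS,eS)$. Since $\mathfrak{R}(l(f,u,e))=\lambda(f,u,e)$ and $\mathfrak{L}(r(e,u,f))=\rho(e,u,f)$, axiom (M2) reduces to $\mathfrak{R}(f^{*})=(\mathfrak{L}(f))^{*}$, which holds verbatim.

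Finally, since $\mathfrak{L}$ and $\mathfrak{R}$ are both $v$-bijective, fully-faithful, and inclusion preserving, they admit inverse functors which are themselves normal category isomorphisms; together with the symmetric verification of (M1) and (M2) for the inverses, this yields a two-sided inverse $(\mathfrak{L}^{-1},\mathfrak{R}^{-1})$ of $m$, so $m$ is a cross-connection isomorphism. I do not anticipate any serious obstacle: all heavy lifting (normal category isomorphism, construction of $\Gamma_G$ by direct imitation of $\Gamma_S$, identification of the biordered set and of the transposes) was done in the preceding propositions, and the present theorem is essentially a bookkeeping check that these identifications are compatible with the cross-connecting functors $\Gamma_G$ and $\Gamma_S$.
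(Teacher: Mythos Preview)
Your proposal is correct and follows essentially the same route as the paper: verify that $m=(\mathfrak{L},\mathfrak{R})$ satisfies (M1) via the componentwise identity $\mathfrak{L}(r^e(\overleftarrow{g}))=\rho^e(Sg)$ and (M2) via the matching transposes $l(f,u,e)\leftrightarrow\lambda(f,u,e)$. Your treatment is in fact a bit more careful than the paper's, since you explicitly note that the inverse pair $(\mathfrak{L}^{-1},\mathfrak{R}^{-1})$ furnishes the two-sided inverse, whereas the paper leaves this implicit in the fact that $\mathfrak{L}$ and $\mathfrak{R}$ are already normal category isomorphisms.
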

	\begin{proof}
		We already have seen that there are normal category isomorphisms between the corresponding categories via the functors $\mathfrak{L} $ and $\mathfrak{R} $ defined by (\ref{l}) and (\ref{r}) respectively. So all we need to verify is $m=(\mathfrak{L} ,\mathfrak{R} )$ is a cross-connection morphism as in Definition \ref{mor}.
		
		First, if $(\overleftarrow{e},\overrightarrow{e})\in E_{\Gamma_G}$, then $(\mathfrak{L} (\overleftarrow{e}),\mathfrak{R} (\overrightarrow{e}))= (Se,eS)\in E_{\Gamma_S}$. Also for any $\overleftarrow{g}\in v\mathcal{L}_G$, 
		\begin{equation*}
		\begin{split}
		\mathfrak{L} (r^e(\overleftarrow{g}))&= \mathfrak{L} (r(e,eg,g))\\
		&= \rho(e,eg,g)\\
		&= \rho^e(Sg)\\
		&=\gamma(Se,eS)(Sg)\\
		&=\gamma(\mathfrak{L} (\overleftarrow{e}),\mathfrak{R} (\overrightarrow{e}))(\mathfrak{L} (\overleftarrow{g})).
		\end{split}
		\end{equation*}
		Hence (M1) is satisfied.
		
		Also if $(\overleftarrow{e},\overrightarrow{e}),(\overleftarrow{f},\overrightarrow{f})\in E_{\Gamma_G}$, then the transpose $r(e,u,f)^\ast$ of $r(e,u,f)\in \mathcal{L}_G(\overleftarrow{e},\overleftarrow{f})$ is the morphism $l(f,u,e)\in \mathcal{R}_G(\overrightarrow{f},\overrightarrow{e})$. Then, 
		\begin{equation*}
		\begin{split}
		\mathfrak{R} (l(f,u,e))&= \lambda(f,u,e)\\
		&=\rho(e,u,f)^\ast\\
		&=(\mathfrak{L} (r(e,u,f)))^\ast.
		\end{split}
		\end{equation*}
		So (M2) is also satisfied and hence $(\mathcal{R}_{G},\mathcal{L}_{G},\Gamma_{G})$ is cross-connection isomorphic to $(\mathcal{R}_S,\mathcal{L}_S,\Gamma_S)$.
	\end{proof}
	
	\section{Summary and future directions}\label{sum}
	As mentioned earlier, in a follow up article \cite{indcxn2}, we shall `lift' the equivalence established in this article to abstract inductive groupoids and cross-connections, thus establishing the equivalence of these constructions completely independent of semigroups. This in turn suggests that any discussion on inductive groupoids is equivalent to one on cross-connection theory, and vice versa.
	
	Comparing these constructions, a regular biordered set may be seen equivalent to a pair of cross-connected regular partially ordered sets, as shown earlier by Nambooripad \cite{bicxn}; thus in principle, showing the skeletons of these constructions are same. Inductive groupoids are obtained from a biordered set by adjoining the groupoid along with restrictions/corestrictions; whereas a normal category is obtained from cross-connected partially ordered sets by adjoining right (left) translations. Observe that the role of structure mappings in the initial construction of Nambooripad \cite{kssthesis,ksssf1,stmp}, which was replaced by restrictions/corestrictions in \cite{mem}, was filled in by retractions/inclusions in the case of normal categories. 
	
	A diligent reader may have noticed that the isomorphisms in normal categories are obtained from the morphisms of the groupoids. Roughly speaking, a normal category is built by `pasting' morphisms of the groupoid into each of the partially ordered sets. The retractions and inclusions in the normal category come from the regular partially ordered sets.
	
	Observe that although normal categories may be seen as one-sided versions of inductive groupoids, it was relatively difficult to construct cross-connections from inductive groupoids; rather than the other way round. In some sense, the inductive groupoid sits inside the cross-connection structure as a cross-connected pair of ordered groupoids. But such a direct reverse identification does not seem evident, at least so far. This in turn suggests that the cross-connection representation encodes much more information regarding the semigroup compared to inductive groupoid.
	
	As mentioned in Section \ref{cxn}, Nambooripad's semigroup $T\mathcal{C}$ of all normal cones in a normal category $\mathcal{C}$ is an interesting object, in itself. The authors believe that it is some kind of a universal object in the category of regular semigroups `generated' by a normal category, much like the idempotent generated semigroup in \cite{mem}, and must be further explored.
	
	This may be particularly relevant in the discussion regarding the maximal subgroups of the free idempotent generated semigroups. This problem has its origins in \cite{mem,ksspastijn,eas} and it has regained recent interest following the unexpected result in \cite{fige} that subgroups of free idempotent generated semigroups need not be free. This has led to various new approaches and results in this area. See \cite{volkov,gould,gray,gray1,igd3} for instance. We believe that due to the close proximity and similarity of cross-connection theory with the biordered sets, it may be worth exploring the cross-connection structure of free (regular) idempotent generated semigroups; and its implications to the above problems. We also believe this may have a counter effect on the theory of cross-connections, especially in the context of generalisation of the theory to arbitrary semigroups \cite{newcross,newcross1}.
	
	Yet another species of regular semigroups whose cross-connection structure appears to be of interest consists of so-called bifree and trifree semigroups in e-va\-ri\-e\-ties. Recall that a class of regular semigroups is said to be an \emph{e-variety} if it is closed under taking direct products, regular subsemigroups and homomorphic images; this notion was introduced by Hall \cite{hall1} and, independently, by Ka\softd{}ourek and Szendrei \cite{kad-szen} for orthodox semigroups. The concept of an e-variety has proved to be very productive; in particular, it has allowed one to introduce rather natural versions of free objects for several important classes of regular semigroups that are known to admit no free objects in the standard sense. Concrete examples include, say, the bifree locally inverse semigroup (see \cite{auinger94,auinger95}) and the bifree E-solid semigroup (see \cite{szendrei96}); their cross-connection structure is certainly worth exploring. 
	
	Above all these, one place the cross-connection theory naturally finds at home should be the `ESN world'. This has been a flourishing area of research, where various non-regular generalisations of inverse semigroups like concordant semigroups, abundant semigroups, Ehresmann semigroups, ample semigroups, restriction semigroups, weakly $U$-regular semigroups etc have been described using generalisations of inductive groupoids like  inductive cancellative categories, Ehresmann categories, inductive categories, inductive constellations, weakly regular categories etc \cite{armstrong,lawsonordered0,lawsonenlarge,lawsonordered,gomes,gouldrestr,hollings2,gouldrestriction,gould2,hollings,wang,wangureg}. It must be mentioned here that the cross-connection construction has already been extended to concordant semigroups \cite{romeo,romeo1,conc}. Hence, it may be very natural to expect the other aforementioned semigroups also to have a rich cross-connection structure.
	
	On a personal note, all this justifies Nambooripad's unwavering confidence in his cross-connection theory, in spite of almost two decades of literal dormancy in the area! 
	
	\section*{Acknowledgements}
	We are very grateful to J. Meakin, University of Nebraska-Lincoln, for reading an initial draft of the article and helping us with several enlightening suggestions.
	
	\end{document}